\newcommand{\N}{\mathbb{N}}
\newcommand{\Z}{\mathbb{Z}}
\newcommand{\R}{\mathbb{R}}
\newcommand{\C}{\mathbb{C}}
\newcommand{\M}{\mathfrak{M}}
\newcommand{\W}{\mathscr{W}}
\newcommand{\csn}{\operatorname{csn}}
\newcommand{\beq}{\begin{eqnarray}}
\newcommand{\eeq}{\end{eqnarray}}
\newcommand{\beqs}{\begin{eqnarray*}}
\newcommand{\eeqs}{\end{eqnarray*}}
\newtheorem{theorem}{Theorem}[section]
\newtheorem{proposition}[theorem]{Proposition}
\newtheorem{lemma}[theorem]{Lemma}
\newtheorem{corollary}[theorem]{Corollary}
\theoremstyle{definition}
\theoremstyle{remark}
\newtheorem{remark}[theorem]{Remark}
\numberwithin{equation}{section}
\tikzset{join/.code=\tikzset{after node path={%
\ifx\tikzchainprevious\pgfutil@empty\else(\tikzchainprevious)%
edge[every join]#1(\tikzchaincurrent)\fi}}}
\tikzset{>=stealth',every on chain/.append style={join},
         every join/.style={->}}
\tikzstyle{labeled}=[execute at begin node=$\scriptstyle,
\begin{document}
\title[The nuclearity of Gelfand-Shilov spaces]{The nuclearity of Gelfand-Shilov spaces and kernel theorems}

\author[A. Debrouwere]{Andreas Debrouwere}
\thanks{A. Debrouwere was supported by  FWO-Vlaanderen through the postdoctoral grant 12T0519N}

\author[L. Neyt]{Lenny Neyt}
\thanks{L. Neyt gratefully acknowledges support by Ghent University through the BOF-grant 01J11615.}

\author[J. Vindas]{Jasson Vindas}
\thanks {J. Vindas was supported by Ghent University through the BOF-grants 01J11615 and 01J04017.}

\address{Department of Mathematics: Analysis, Logic and Discrete Mathematics\\ Ghent University\\ Krijgslaan 281\\ 9000 Gent\\ Belgium}
\email{andreas.debrouwere@UGent.be}
\email{lenny.neyt@UGent.be}
\email{jasson.vindas@UGent.be}

\subjclass[2010]{\emph{Primary.}  46A11, 46E10. \emph{Secondary} 46A45.} 
\keywords{Gelfand-Shilov spaces; nuclear spaces; Schwartz kernel theorems; weight sequence systems; weight function systems}

\begin{abstract}
We study the nuclearity of the Gelfand-Shilov spaces $\mathcal{S}^{(\mathfrak{M})}_{(\mathscr{W})}$  and  $\mathcal{S}^{\{\mathfrak{M}\}}_{\{\mathscr{W}\}}$, defined via  a weight (multi-)sequence system $\mathfrak{M}$ and a weight function system $\mathscr{W}$.  We obtain characterizations of nuclearity for these function spaces that are counterparts of those for K\"{o}the sequence spaces. As an application, we  prove new kernel theorems.
 Our general framework allows for a unified treatment of the Gelfand-Shilov spaces $\mathcal{S}^{(M)}_{(A)}$  and  $\mathcal{S}^{\{M\}}_{\{A\}}$ (defined via  weight sequences $M$ and $A$) and the Beurling-Bj\"orck spaces $\mathcal{S}^{(\omega)}_{(\eta)}$  and  $\mathcal{S}^{\{\omega\}}_{\{\eta\}}$ (defined via weight functions $\omega$ and $\eta$). Our results cover anisotropic cases as well.
\end{abstract}
\maketitle
\section{Introduction}
Nuclear spaces play a major role in functional analysis. One of their key features is the validity of abstract Schwartz kernel theorems, which often allows for the representation and study of important classes of continuous linear mappings via kernels. Therefore, establishing whether a given function space is nuclear becomes a central question from the point of view of both applications and understanding its locally convex structure.

In the case of weighted Fr\'{e}chet spaces of smooth functions on $\mathbb{R}^{d}$, the nuclearity question has been completely settled. Let $\W = (w_n)_{n \in \N}$ be a sequence of positive continuous functions on $\R^d$ such that $1 \leq w_1 \leq w_2 \leq \cdots$. Consider the associated Gelfand-Shilov spaces of smooth functions \cite{Gelfand-Shilov2}
$$
\mathcal{K}^q(\W) = \{ \varphi \in C^\infty(\R^d) \, | \,  \max_{|\alpha| \leq n} \| \varphi^{(\alpha)} w_n \|_{L^q} < \infty \quad \forall n \in \N \}, \qquad q \in [1,\infty],
$$
endowed with their natural Fr\'echet space topologies. If $\W$ satisfies the mild regularity hypothesis
\begin{equation*}
\tag{$\operatorname{wM}$}
 \forall n \in \N \, \exists m \in \N \, \exists C > 0 \, \forall x \in \R^d \, : \,  \sup_{|y| \leq 1} w_n(x + y) \leq C w_m(x),
\end{equation*}
then $\mathcal{K}^q(\W)$ is nuclear if and only if $\W$ satisfies the condition
\[
\tag{$\operatorname{N}$} \forall n \in \N \, \exists m \in \N \,:\, w_n / w_m \in L^{1}.
\]
In fact, this result follows from Vogt's sequence space representation of $\mathcal{K}^q(\W)$ \cite[Theorem 3.1]{Vogt} and the well-known corresponding characterization of nuclearity for K\"othe sequence spaces \cite[Proposition 28.16]{M-V}. Condition $(\operatorname{N})$ appears already in the work of Gelfand and Shilov, who proved the nuclearity of $\mathcal{K}^\infty(\W)$ under it and some extra  regularity assumptions in a direct fashion \cite[p.~181]{Gelfand-Shilov3}.

The goal of this article is to establish the analogue of the above result for Gelfand-Shilov spaces of ultradifferentiable functions, both of Beurling and Roumieu type. These 
spaces, also known as spaces of type $\mathcal{S}$, were introduced in the context of parabolic initial-value problems \cite{Gelfand-Shilov3, Gelfand-Shilov2}, and thereafter turned out to be the right framework for the analysis of decay and regularity properties of global solutions to large classes of linear and semi-linear PDE on $\mathbb{R}^{d}$.  We refer to the monograph \cite{N-R} and the survey article \cite{Gramchev2016} for accounts on applications of Gelfand-Shilov spaces; see also \cite{C-T,Prangoski2013} for global pseudo-differential calculus in this setting. The study of nuclearity for spaces of type $\mathcal{S}$ goes back to Mityagin \cite{Mitjagin1960}, and has recently captured much attention  \cite{B-J-O2019,B-J-O,B-J-O-S, D-N-V-NBB, P-P-V}; particularly, in connection with applications to microlocal analysis of pseudo-differential operators and the convolution theory for generalized functions. We mention that in some cases nuclearity becomes a straightforward consequence of sequence space representations provided  by eigenfunction expansions with respect to various PDO \cite{C-G-P-R,Langenbruch2006,V-V2016}. 
However, such representations are not available for all Gelfand-Shilov spaces.  We deal in this article with the latter situation.

We shall work here with the notion of ultradifferentiability defined through weight matrices \cite{R-S-Composition}, called weight sequence systems in the present article.
In particular, as explained in \cite{R-S-Composition}, this leads to a unified treatment of classes of ultradifferentiable functions defined via weight sequences \cite{Komatsu} and via weight functions \cite{B-M-T}. Moreover, we further extend the considerations from \cite{R-S-Composition} to  multi-indexed weight sequence systems in order to cover the anisotropic case as well. Our spaces are defined as follows. Given a family $\M = \{ M^{\lambda} \, | \, \lambda \in \R_{+} \}$ of (multi-indexed) sequences $M^\lambda = (M^\lambda_\alpha)_{\alpha \in \N^{d}}$ of positive numbers such that  $M^{\lambda} \leq M^{\mu}$ for all $\lambda \leq \mu$ and a family $\mathscr{W} = \{ w^{\lambda} \,|\, \lambda \in \R_{+} \}$ of positive continuous functions $w^{\lambda}$ on $\R^{d}$ such that $1 \leq w^{\lambda} \leq w^{\mu}$ for all  $\mu \leq \lambda$, we consider the  Gelfand-Shilov spaces 
$$
\mathcal{S}^{(\mathfrak{M})}_{(\mathscr{W}),q} = \varprojlim_{\lambda \to 0^+} \mathcal{S}^{M^\lambda}_{w^\lambda,q}, \qquad \mathcal{S}^{\{\mathfrak{M}\}}_{\{\mathscr{W}\},q} = \varinjlim_{\lambda \to \infty} \mathcal{S}^{M^\lambda}_{w^\lambda,q}, \qquad q \in [1,\infty],
$$
where $\mathcal{S}^{M^\lambda}_{w^\lambda,q}$ is the Banach space consisting of all $\varphi \in C^\infty(\R^d)$ such that
$$
\| \varphi \|_{\mathcal{S}^{M^\lambda}_{w^\lambda,q}} = \sup_{\alpha \in \N^d} \frac{\| \varphi^{(\alpha)} w^\lambda \|_{L^q}}{M^\lambda_{\alpha}}  < \infty.
$$
We use $\mathcal{S}^{[\mathfrak{M}]}_{[\mathscr{W}],q}$ as a common notation for  $\mathcal{S}^{(\mathfrak{M})}_{(\mathscr{W}),q}$  and  $\mathcal{S}^{\{\mathfrak{M}\}}_{\{\mathscr{W}\},q}$. In Section \ref{sect-4}, we give  sufficient conditions for $\mathcal{S}^{[\mathfrak{M}]}_{[\mathscr{W}],q}$ to be nuclear in terms of $\M$ and $\W$; see Theorem \ref{ultra-suff}. Actually, we show that for an important class of weight sequence systems our hypotheses also become necessary, providing a full characterization of nuclearity in such a case; see Theorem \ref{ultra-char}. Moreover, nuclearity is related to the identity $\mathcal{S}^{[\mathfrak{M}]}_{[\mathscr{W}],q} = \mathcal{S}^{[\mathfrak{M}]}_{[\mathscr{W}],r}$, $q \neq r$. Section \ref{sect-3} is devoted to studying when this equality holds; see Theorem \ref{ft-char-N} and Theorem \ref{ultra-char-2}.  A useful feature of our approach is that our considerations are stable under tensor products. We shall exploit this fact to derive new kernel theorems for Gelfand-Shilov spaces in Section \ref{section kernel}. Note that these kernel theorems are `global' counterparts of Petzsche's results from \cite{Petzsche}. 

We end this introduction by stating two samples of our results in the particular but important case of the Gelfand-Shilov spaces $\mathcal{S}^{[M]}_{[A],q}$ arising from single weight sequences $M=(M_{p})_{p\in\mathbb{N}}$ and $A=(A_{p})_{p\in\mathbb{N}}$ . We refer to Section \ref{sect-2} and Section \ref{sect-3} for unexplained notions.

\begin{theorem}\label{intro-1}
Let $M=(M_{p})_{p\in\mathbb{N}}$ and $A=(A_{p})_{p\in\mathbb{N}}$ be two weight sequences satisfying $(M.1)$. Suppose that $\mathcal{S}^{[M]}_{[A],q} \neq \{0\}$ for some $q \in [1,\infty]$. Then, the following statements are equivalent:
\begin{itemize}
\item[$(i)$] $M$ and $A$ both satisfy $(M.2)'$.
\item[$(ii)$] $\mathcal{S}^{[M]}_{[A],q}$ is nuclear for all $q \in [1,\infty]$.
\item[$(iii)$] $\mathcal{S}^{[M]}_{[A],q}$ is nuclear for some $q \in [1,\infty]$.
\end{itemize}
\end{theorem}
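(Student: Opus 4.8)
The plan is to deduce Theorem~\ref{intro-1} from the general results of the later sections by realising $\mathcal{S}^{[M]}_{[A],q}$ inside our framework and then matching the abstract hypotheses against condition $(M.2)'$. The first step is the reduction: given single weight sequences $M$ and $A$ satisfying $(M.1)$, I would attach to them the weight sequence system $\M = \{ M^\lambda \}$ and the weight function system $\W = \{ w^\lambda \}$ defined, in the isotropic case, by $M^\lambda_\alpha = \lambda^{|\alpha|} M_{|\alpha|}$ and $w^\lambda(x) = \exp(\omega_A(x/\lambda))$, where $\omega_A$ is the associated function of $A$, the parametrisation being chosen so that the Beurling projective limit reproduces $\mathcal{S}^{(M)}_{(A),q}$ and the Roumieu inductive limit reproduces $\mathcal{S}^{\{M\}}_{\{A\},q}$. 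Here $(M.1)$ (log-convexity of $M$) is exactly what guarantees that $\M$ is a genuine, regularly behaved weight sequence system, and in particular that it belongs to the class for which the necessity half of Theorem~\ref{ultra-char} is available; this membership must be checked explicitly.

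The technical core is to match the two abstract conditions with $(M.2)'$ for $M$ and for $A$ separately. On the sequence side, I expect the structural hypothesis imposed on $\M$ in Theorems~\ref{ultra-suff} and \ref{ultra-char} to be, for a system arising from a single sequence, equivalent to $(M.2)'$ for $M$: the point is that $(M.2)'$ is precisely the property that lets one pass between the scales $M^\lambda$ and $M^{\lambda'}$ with only a geometric loss, which is the form in which the hypothesis enters. On the weight side, I would show that condition $(\operatorname{N})$ for $\W$, namely $\forall \lambda \, \exists \mu : w^\lambda / w^\mu \in L^1$, is equivalent to $(M.2)'$ for $A$. Using $w^\lambda = \exp(\omega_A(\cdot/\lambda))$ this reduces to an integrability estimate for $\exp(\omega_A(x/\lambda) - \omega_A(x/\mu))$ over $\R^d$, which holds for a suitable choice of $\mu$ exactly when the associated function $\omega_A$ has the growth (doubling-type) property equivalent to $(M.2)'$ for $A$.

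With these two equivalences in hand the theorem assembles by soft arguments. For $(i) \Rightarrow (ii)$, condition $(i)$ supplies both the hypothesis on $\M$ and condition $(\operatorname{N})$ for $\W$, so Theorem~\ref{ultra-suff} yields nuclearity of $\mathcal{S}^{[M]}_{[A],q}$ for every $q \in [1,\infty]$; the implication $(ii) \Rightarrow (iii)$ is trivial. For $(iii) \Rightarrow (i)$, since the single-sequence system lies in the important class, Theorem~\ref{ultra-char} applies and nuclearity for one value of $q$ forces both abstract conditions, which translate back to $(M.2)'$ for $M$ and $A$. The nondegeneracy assumption $\mathcal{S}^{[M]}_{[A],q} \neq \{0\}$ is used to exclude the trivial space, for which the equivalences would be vacuous, and to ensure the weights are effectively present.

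The step I expect to be the main obstacle is the necessity direction $(iii) \Rightarrow (i)$. Beyond verifying that the systems built from a single $M$ genuinely fall within the class where Theorem~\ref{ultra-char} provides necessity, one must treat the Beurling and Roumieu cases simultaneously under the $[\,\cdot\,]$ convention and, crucially, rule out that nuclearity could hold while $(M.2)'$ fails for one of the sequences. I would handle a hypothetical failure of $(M.2)'$ by transporting it, through the equivalences above, into a failure of the K\"othe-type summability condition that obstructs nuclearity, invoking Theorem~\ref{ultra-char-2} to control the comparison between different exponents $q$ and thereby to confirm that the characterising condition is genuinely independent of $q$.
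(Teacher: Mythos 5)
Your proposal is correct and follows essentially the same route as the paper: the paper realises $\mathcal{S}^{[M]}_{[A],q}$ as $\mathcal{S}^{[\mathfrak{M}_M]}_{[\mathscr{W}_A],q}$ with $\mathfrak{M}_M = \{(\lambda^{|\alpha|}M_{|\alpha|})_\alpha\}$ and $\mathscr{W}_A = \{e^{\omega_A(\cdot/\lambda)}\}$, uses Lemma \ref{lemma-1} (resting on Lemma \ref{M2wss}, where the accelerating property is exactly what you flag as needing to be ``checked explicitly'') to translate $(M.2)'$ for $M$ and $A$ into $[\mathfrak{M}.2]'$ for $\mathfrak{M}_M$ and $[\operatorname{N}]$ for $\mathscr{W}_A$, and then cites Theorem \ref{ultra-char} for both directions, just as you do. Your only deviation is the appeal to Theorem \ref{ultra-char-2} in the necessity step, which is superfluous here (it is needed for Theorem \ref{intro-2}, not \ref{intro-1}), but this does not affect correctness.
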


The implication $(i)\Rightarrow (ii)$ in the Roumieu case is actually contained in Mityagin's work \cite{Mitjagin1960}. The same topic is treated in \cite[Proposition 2.11 and Remark 2.13]{P-P-V} including also the Beurling case but making use of stronger assumptions on the weight sequences. On the other hand,  to the best of our knowledge, the converse, i.e., the implication $(iii)\Rightarrow (i)$ appears to be new even in the particular case we have stated in Theorem \ref{intro-1}. Notice that $(i)$ is well-known  to yield  the equality $\mathcal{S}^{[M]}_{[A],q}$ =  $\mathcal{S}^{[M]}_{[A],r}$ as locally convex spaces for all $q,r \in  [1,\infty]$, see e.g.\ \cite[Theorem 2.5.2]{C-K-P}. Interestingly, when we fix the weight sequence $M$ and assume that it satisfies $(M.1)$ and $(M.2)'$ (cf. Subsection \ref{subsection weight sequence systems}), then we can also get back from this statement to nuclearity. 

\begin{theorem}\label{intro-2}
Let $M=(M_{p})_{p\in\mathbb{N}}$ be a weight sequence satisfying $(M.1)$ and $(M.2)'$. Let $A  = (A_{p})_{p\in\mathbb{N}} $ be a weight sequence satisfying $(M.1)$. Suppose that  $\mathcal{S}^{[M]}_{[A],q} \neq \{0\}$ for some $q \in [1,\infty]$. Then, the following statements are equivalent:
\begin{itemize}
\item[$(i)$] $A$ satisfies $(M.2)'$.
\item[$(ii)$] $\mathcal{S}^{[M]}_{[A],q}$ is nuclear for all $q \in [1,\infty]$.
\item[$(iii)$] $\mathcal{S}^{[M]}_{[A],q}$ is nuclear for some $q \in [1,\infty]$.
\item[$(iv)$] $\mathcal{S}^{[M]}_{[A],q}$ =  $\mathcal{S}^{[M]}_{[A],r}$ as locally convex spaces for all $q,r \in  [1,\infty]$.
\item[$(v)$] $\mathcal{S}^{[M]}_{[A],q}$ =  $\mathcal{S}^{[M]}_{[A],r}$ as sets for some  $q,r \in  [1,\infty]$ with $q \neq r$.
\end{itemize}
\end{theorem}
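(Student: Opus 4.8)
The plan is to use Theorem \ref{intro-1} as the backbone and to splice the two new conditions $(iv)$ and $(v)$ into its equivalence chain, organizing the argument as the cycle $(i)\Rightarrow(ii)\Rightarrow(iii)\Rightarrow(i)$ supplemented by $(i)\Rightarrow(iv)\Rightarrow(v)\Rightarrow(i)$. The first cycle is essentially immediate: since $M$ is assumed to satisfy $(M.1)$ and $(M.2)'$ throughout, the hypothesis ``$M$ and $A$ both satisfy $(M.2)'$'' of Theorem \ref{intro-1} reduces exactly to our condition $(i)$, namely ``$A$ satisfies $(M.2)'$''. Hence Theorem \ref{intro-1}, applied to this fixed $M$ and the given $A$ (both satisfying $(M.1)$, with nontriviality guaranteed by the standing assumption $\mathcal{S}^{[M]}_{[A],q}\neq\{0\}$), yields $(i)\Leftrightarrow(ii)\Leftrightarrow(iii)$ at once. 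The remaining task is therefore purely about the exponent-identity conditions $(iv)$ and $(v)$.

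For $(i)\Rightarrow(iv)$ I would invoke the classical coincidence result for Gelfand--Shilov spaces: once both $M$ and $A$ satisfy $(M.1)$ and $(M.2)'$ --- which is precisely the situation under $(i)$ together with the standing hypothesis on $M$ --- the Banach building blocks $\mathcal{S}^{M^{\lambda}}_{w^{\lambda},q}$ can be compared across exponents by Sobolev-type embeddings, so that $\mathcal{S}^{[M]}_{[A],q}=\mathcal{S}^{[M]}_{[A],r}$ as locally convex spaces for all $q,r\in[1,\infty]$ (see \cite[Theorem 2.5.2]{C-K-P}, or the characterization given in Section \ref{sect-3}). The implication $(iv)\Rightarrow(v)$ is trivial, since topological equality for all $q,r$ in particular gives set-theoretic equality for one pair $q\neq r$.

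The genuinely new content, and the main obstacle, is $(v)\Rightarrow(i)$: I must show that mere set-theoretic equality of an $L^{q}$- and an $L^{r}$-based space forces the moderate-growth condition $(M.2)'$ on the decay sequence $A$. The approach has two steps. First, upgrade the set equality to a topological one: both $\mathcal{S}^{[M]}_{[A],q}$ and $\mathcal{S}^{[M]}_{[A],r}$ embed continuously into $C^{\infty}(\R^{d})$, so any common limit in the two topologies must coincide, whence the identity map between them has closed graph; the closed graph theorem (in the Fr\'echet setting in the Beurling case, and in the webbed/de Wilde setting in the Roumieu case) then promotes $(v)$ to equality as locally convex spaces for that pair $q\neq r$. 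Second, feed this into the general characterization of the identity $\mathcal{S}^{[\M]}_{[\W],q}=\mathcal{S}^{[\M]}_{[\W],r}$ from Section \ref{sect-3} (Theorem \ref{ft-char-N}, Theorem \ref{ultra-char-2}), specialized to the weight sequence system $\M$ and weight function system $\W$ generated by the single sequences $M$ and $A$. The crux is then a computation with associated functions: the equality of exponents is governed by an integrability condition of type $(\operatorname{N})$ on $\W$, and for a system of this form one has to verify that this property is equivalent to $(M.2)'$ for $A$. Here the log-convexity $(M.1)$ of $A$ and the duality between $A$ and its associated function are used to translate the functional-analytic condition into a growth condition on the quotients $A_{p+1}/A_{p}$, while the fixed sequence $M$ --- satisfying $(M.1)$ and $(M.2)'$, hence placing $\M$ in the regular class to which Theorem \ref{ultra-char} applies --- imposes no constraint on the smoothness side and is carried along inertly. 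I expect the delicate point to be precisely this last translation, together with checking that the closed-graph upgrade is legitimate uniformly in both the Beurling and the Roumieu cases; granting it, the loop $(v)\Rightarrow(i)$ closes and the equivalence is complete.
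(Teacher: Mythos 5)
Your proposal is correct and is in substance the paper's own proof: the paper obtains Theorem \ref{intro-2} as an immediate corollary of Theorem \ref{ultra-char-2} applied to the systems $\mathfrak{M}_M$ and $\mathscr{W}_A$, with Lemma \ref{lemma-1} both verifying the hypotheses ($\mathfrak{M}_M$ satisfies $[\operatorname{L}]$ and $[\mathfrak{M}.2]'$, $\mathscr{W}_A$ satisfies $[\operatorname{M}]$) and translating $[\operatorname{N}]$ for $\mathscr{W}_A$ into $(M.2)'$ for $A$ via associated functions --- exactly the reduction you describe. The only deviation is that your closed-graph upgrade in $(v)\Rightarrow(i)$, while legitimate (Fr\'echet in the Beurling case, webbed/ultrabornological in the Roumieu case), is unnecessary: Theorems \ref{ft-char-N} and \ref{ultra-char-2} take mere set-theoretic equality as input, because the underlying argument transfers that equality to K\"othe sequence spaces through the sampling and synthesis maps $S_q$ and $T_{\psi,r}$ of Propositions \ref{compl-1} and \ref{compl-2}, and Proposition \ref{nuclear-echelon} needs only equality as sets, so the ``delicate point'' you anticipate there does not arise.
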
 

It should be noticed that Theorem \ref{intro-2}  corresponds  to the classical characterization of nuclearity for K\"othe sequence spaces  (cf. \cite[Proposition\ 28.16]{M-V}) in the setting of Gelfand-Shilov spaces. As a corollary of our more general considerations, we shall also obtain in Section \ref{sect-4} an analogue of Theorem \ref{intro-2} for spaces of Beurling-Bj\"orck type \cite{bjorck66} with ultradifferentiable component defined through a Braun-Meise-Taylor weight function \cite{B-M-T}; in fact, Theorem \ref{intro-3} considerably improves recent results by Boiti et al. \cite{B-J-O2019,B-J-O, B-J-O-S}. In this regard, we mention our recent article \cite{D-N-V-NBB}, whose results apply to Beurling-Bj\"orck spaces not necessarily arising from Braun-Meise-Taylor weight functions.

\section{Nuclear K\"othe sequence spaces}\label{sect-1}
K\"othe sequence spaces, both of echelon and co-echelon type \cite{M-V,B-S}, are essential tools in this work. We review in this short section when they are nuclear. 

Given a  sequence $a= (a_j)_{j \in \Z^d}$ of positive numbers, we define $l^q(\Z^d, a) = l^q(a) $, $q \in [1,\infty]$, as the weighted Banach sequence space consisting of all $c = (c_j)_{j \in \Z^d} \in \C^{\Z^d}$ such that
$$
\| c \|_{l^q(a)} = \left(\sum_{j \in  \Z^d} (|c_j|a_j)^q\right)^{1/q}  < \infty, \qquad q \in [1,\infty),
$$
and
$$
\| c \|_{l^\infty(a)} = \sup_{j \in  \Z^d} |c_j|a_j < \infty.
$$
Set $\R_+ = (0,\infty)$. A  \emph{K\"othe set}  is a family $A = \{ a^\lambda \, | \, \lambda \in \R_+ \}$ of sequences $a^\lambda$ of positive numbers such that $a^{\lambda}_{j} \leq a^\mu_j$ for all $j \in \Z^d$ and $\mu \leq \lambda$. We define the associated  \emph{K\"othe sequence spaces} as
$$
\lambda^q(A) = \varprojlim_{\lambda \to 0^+} l^q(a^\lambda), \qquad  \lambda^q\{A\} = \varinjlim_{\lambda \to \infty} l^q(a^\lambda), \qquad q \in [1,\infty].
$$
Note that $\lambda^q(A)$ is a Fr\'echet space, while $\lambda^q\{A\}$ is a regular $(LB)$-space, as follows from \cite[p.~ 80, Corollary 7]{Bierstedt}. We will use $\lambda^q[A]$ as a common notation for $\lambda^q(A)$ and $\lambda^q\{A\}$. In addition, we shall often first state assertions for $\lambda^q(A)$ followed in parenthesis by the corresponding statements for $\lambda^q\{A\}$. Similar conventions will be used for other spaces and notations. 

The nuclearity of the spaces $\lambda^q[A]$ can be characterized in terms of the following conditions on the K\"othe set $A$:
\begin{itemize}
\item[$(\operatorname{N})$] $\displaystyle \forall \lambda \in \R_+ \, \exists \mu \in \R_+ \, : \,  a^\lambda/a^\mu \in l^1$;
\item[$\{\operatorname{N}\}$] $\displaystyle \forall \mu \in \R_+ \, \exists \lambda \in \R_+ \, : \,   a^\lambda/a^\mu \in l^1$.
\end{itemize}
\begin{proposition}[{cf.\ \cite[Proposition 28.16]{M-V} and \cite[Proposition 15, p.~75]{Bierstedt})}] \label{nuclear-echelon}
Let  $A$ be a K\"othe set. The following statements are equivalent:
\begin{itemize}
\item[$(i)$] $A$ satisfies $[\operatorname{N}]$.
\item[$(ii)$] $\lambda^q[A]$ is nuclear for all $q \in  [1,\infty]$.
\item[$(iii)$] $\lambda^q[A]$ is nuclear for some $q \in  [1,\infty]$.
\item[$(iv)$] $\lambda^q[A] = \lambda^r[A]$ as locally convex spaces for all $q,r \in  [1,\infty]$.
\item[$(v)$] $\lambda^q[A] = \lambda^r[A]$ as sets for some $q,r \in  [1,\infty]$ with $q \neq r$.
\end{itemize}
\end{proposition}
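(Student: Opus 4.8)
The plan is to derive all five equivalences from the standard criteria for nuclearity of reduced projective and inductive spectra of Banach spaces, combined with an explicit analysis of diagonal operators between weighted $l^q$-spaces. Recall that a reduced projective limit $\varprojlim_n E_n$ of Banach spaces is nuclear exactly when, for each $n$, some linking map $E_m \to E_n$ ($m \geq n$) is nuclear; and that, by the regularity provided by \cite[p.~80, Corollary 7]{Bierstedt}, a countable inductive limit $\varinjlim_n E_n$ of Banach spaces with injective linking maps is nuclear exactly when, for each $n$, some linking map $E_n \to E_m$ ($m \geq n$) is nuclear. In the present situation the $E_n$ are the spaces $l^q(a^\lambda)$ taken along a cofinal sequence of indices $\lambda$, and the linking maps are the canonical diagonal inclusions; the whole statement therefore reduces to deciding when such inclusions are nuclear, respectively continuous.

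First I would isolate the two elementary facts about diagonal maps on which everything rests. For $a^\lambda \leq a^\mu$, the inclusion $l^q(a^\mu) \to l^q(a^\lambda)$ is, after absorbing the weights, the diagonal operator with entries $a^\lambda_j / a^\mu_j$; hence it is nuclear if and only if $a^\lambda/a^\mu \in l^1$, for every single $q \in [1,\infty]$ (the endpoint $q = \infty$ being handled by factoring through $c_0$). Since this criterion does not depend on $q$, and since $[\operatorname{N}]$ is precisely the assertion that a cofinal family of linking maps is nuclear, the two nuclearity criteria above yield $(i) \Leftrightarrow (ii) \Leftrightarrow (iii)$ at once: nuclearity for one value of $q$, nuclearity for all values of $q$, and $[\operatorname{N}]$ become simultaneously equivalent. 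The second fact, proved by Hölder's inequality and tested on unit vectors, is that for $q < r$ the diagonal inclusion $l^r(a^\mu) \to l^q(a^\lambda)$ is \emph{continuous} if and only if $a^\lambda/a^\mu \in l^s$ with $1/s = 1/q - 1/r$.

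To bring in $(iv)$ and $(v)$ I would first record the trivial continuous inclusion $\lambda^q[A] \subseteq \lambda^r[A]$ for $q \leq r$. For $(i) \Rightarrow (iv)$, condition $[\operatorname{N}]$ gives $a^\lambda/a^\mu \in l^1 \subseteq l^s$ for every $s$, so the second diagonal fact supplies the reverse continuous inclusion and thus $\lambda^q[A] = \lambda^r[A]$ as locally convex spaces for all $q, r$; the implication $(iv) \Rightarrow (v)$ is trivial. The heart of the matter is $(v) \Rightarrow (i)$. Assuming $\lambda^q[A] = \lambda^r[A]$ as sets with $q \neq r$, say $q < r$, so that the content is the nontrivial inclusion $\lambda^r[A] \subseteq \lambda^q[A]$, the identity is continuous by the closed graph theorem (in the Roumieu case via De Wilde's theorem together with the regularity of the $(LB)$-space), so this inclusion is continuous; Grothendieck's factorization theorem then produces, along the appropriate cofinal family, continuous diagonal maps $l^r(a^\mu) \to l^q(a^\lambda)$, whence $a^\lambda/a^\mu \in l^s$ with $1/s = 1/q - 1/r < 1$.

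The decisive remaining step is to upgrade this $l^s$-summability to the $l^1$-summability demanded by $[\operatorname{N}]$, and this is where I expect the main difficulty to lie. Here I would exploit the multiplicativity of the scale: iterating the previous step $k$ times yields indices with $a^{\lambda_0}/a^{\lambda_k} = \prod_{i=1}^{k} (a^{\lambda_{i-1}}/a^{\lambda_i})$, a product of $k$ factors each lying in $l^s$, hence lying in $l^{s/k}$ by Hölder's inequality; choosing $k \geq s$ forces $a^{\lambda_0}/a^{\lambda_k} \in l^1$, which is exactly $[\operatorname{N}]$, with the quantifier order matching the projective (respectively inductive) case. Apart from this bridging argument, the chief points requiring care are the correct bookkeeping of the flipped quantifiers between the Beurling and Roumieu settings and the appeal to regularity and the closed graph theorem in the $(LB)$-case; by contrast, the equivalences among $(i)$–$(iii)$ amount to the cited results \cite[Proposition 28.16]{M-V} and \cite[Proposition 15, p.~75]{Bierstedt}.
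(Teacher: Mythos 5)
The paper offers no proof of Proposition \ref{nuclear-echelon} at all: it is stated as a known result, with the Beurling (echelon) case deferred to \cite[Proposition 28.16]{M-V} and the Roumieu (co-echelon) case to \cite[Proposition 15, p.~75]{Bierstedt}. Your proposal therefore cannot ``match'' the paper's proof; what it does is reconstruct, essentially correctly, the classical arguments behind those citations. The three pillars are all sound: (1) the reduction of nuclearity of $\lambda^q[A]$ to nuclearity of the diagonal linking maps, with the inclusion $l^q(a^\mu)\to l^q(a^\lambda)$ nuclear precisely when $a^\lambda/a^\mu\in l^1$, uniformly in $q\in[1,\infty]$, which gives $(i)\Leftrightarrow(ii)\Leftrightarrow(iii)$; (2) the continuity criterion for diagonal maps $l^r(a^\mu)\to l^q(a^\lambda)$, $q<r$, namely $a^\lambda/a^\mu\in l^s$ with $1/s=1/q-1/r$ (H\"older plus testing on finite sections), which combined with the closed graph theorem (De Wilde's version in the $(LB)$-case) and Grothendieck's factorization theorem turns the set equality in $(v)$ into cofinal $l^s$-estimates; and (3) the telescoping H\"older iteration $a^{\lambda_0}/a^{\lambda_k}=\prod_{i=1}^k a^{\lambda_{i-1}}/a^{\lambda_i}\in l^{s/k}\subseteq l^1$ for $k\geq s$, which is the correct bridge from $l^s$ to condition $[\operatorname{N}]$ and respects the quantifier order in both the Beurling and Roumieu settings. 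The one step you state more casually than it deserves is the necessity half of your $(LB)$-nuclearity criterion (nuclear $\Rightarrow$ some linking map $E_n\to E_m$ nuclear): this does not follow from regularity alone, but needs a duality argument (the strong dual of a nuclear $(DF)$-space is a nuclear Fr\'echet space, to which the Grothendieck--Pietsch criterion applies, combined with regularity to pull the conclusion back to the steps) or, as you concede at the end, the cited result of Bierstedt itself. Since you explicitly allow yourself those citations for $(i)$--$(iii)$, this is not a gap in the logic, but it is exactly the point where the co-echelon case is harder than the echelon one.
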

\section{Weight sequence systems and weight function systems}\label{sect-2}
In this section, we define weight sequence systems (introduced in  \cite{R-S-Composition} under the name weight matrix) and weight function systems.
\subsection{Weight function systems} 
A positive continuous function $w$ on $\R^d$ is called a \emph{weight function} on $\R^d$ if  $w(x) \geq 1$ for all $x \in \R^d$. A \emph{weight function system} on $\R^d$ is a family $\mathscr{W} = \{ w^{\lambda} \,|\, \lambda \in \R_{+} \}$ of weight functions  $w^{\lambda}$ on $\R^d$ such that $w^{\lambda}(x) \leq w^{\mu}(x)$ for all $x \in \R^d$ and  $\mu \leq \lambda$. We consider the following conditions on a weight function system $\mathscr{W}$:
	\begin{itemize}
		\item[$(\operatorname{wM})$] $\forall \lambda \in \R_{+} ~ \exists \mu \in \R_{+} ~ \exists C > 0 ~ \forall x \in \R^{d} \,:\,  \sup_{|y| \leq 1} w^{\lambda}(x + y) \leq C w^{\mu}(x)$;
		\item[$\{\operatorname{wM}\}$] $\forall \mu \in \R_{+} ~ \exists \lambda \in \R_{+} ~ \exists C > 0 ~ \forall x \in \R^{d} \,:\, \sup_{|y| \leq 1} w^{\lambda}(x + y) \leq C w^{\mu}(x)$;
		\item[$(\operatorname{M})$] $\forall \lambda \in \R_{+} ~ \exists \mu, \nu \in \R_{+} ~ \exists C > 0 ~ \forall x, y \in \R^{d} \,:\, w^{\lambda}(x + y) \leq C w^{\mu}(x) w^{\nu}(y)$; 
		\item[$\{\operatorname{M}\}$] $\forall  \mu, \nu \in \R_{+} ~ \exists \lambda \in \R_{+} ~ \exists C > 0 ~ \forall x, y \in \R^{d} \,:\, w^{\lambda}(x + y) \leq C w^{\mu}(x) w^{\nu}(y)$; 
		\item[$(\operatorname{N})$] $\forall \lambda \in \R_{+} ~ \exists \mu \in \R_{+} \,:\, w^{\lambda} / w^{\mu} \in L^{1}$;
		\item[$\{\operatorname{N}\}$] $\forall \mu \in \R_{+} ~ \exists \lambda \in \R_{+} \,:\, w^{\lambda} / w^{\mu} \in L^{1}$.
	\end{itemize}
Clearly, $[\operatorname{M}]$ implies $[\operatorname{wM}]$.  Note that $[\operatorname{wM}]$ yields that
\begin{gather}
\label{joo}
\forall \lambda \in \R_{+} ~ \exists \lambda' \in \R_+ ~ \forall \mu' \in \R_+ ~ \exists \mu \in \R_{+} ~ (\forall \lambda' \in \R_{+} ~ \exists \lambda \in \R_+ ~ \forall \mu \in \R_+ ~ \exists \mu' \in \R_{+}) \\ \nonumber
 \exists C > 0 ~ \forall x \in \R^{d} \, : \,\sup_{|y| \leq 1}\frac{w^{\lambda}(x+y)}{w^{\mu}(x+y)} \leq C \frac{w^{\lambda'}(x)}{w^{\mu'}(x)}.
\end{gather}

We define the tensor product of a finite number of weight function systems  $\mathscr{W}_j = \{ w^{\lambda}_j \,|\, \lambda \in \R_{+} \}$ on $\R^{d_j}$, $j = 1, \ldots, k$,
as
$$
\mathscr{W}_1 \otimes \cdots \otimes \mathscr{W}_k = \{ w^{\lambda}_1\otimes \cdots \otimes w^{\lambda}_k \, | \, \lambda \in \R_+\},
$$
where $(w^{\lambda}_1 \otimes \cdots \otimes w^{\lambda}_k)(x) = w^{\lambda}_1(x_1) \cdots w^{\lambda}_k(x_k)$ for $x = (x_1, \ldots, x_k) \in \R^{d_1 + \ldots +d_k}$.
Note that $\mathscr{W}_1 \otimes \cdots \otimes \mathscr{W}_k$ satisfies $[\operatorname{wM}]$ ($[\operatorname{M}]$ and $[\operatorname{N}]$, respectively) if and only if each $\W_j$ does so.

The following lemma will be needed later on. We denote by  $C_{0}$ the space of continuous functions vanishing at $\infty$. 
	\begin{lemma}
		\label{lemma-C0}
		Let $\W$ be a weight function system satisfying $[\operatorname{wM}]$ and $[\operatorname{N}]$. Then,
			\[ \forall \lambda \in \R_{+} ~ \exists \mu \in \R_{+} ~ ( \forall \mu \in \R_{+} ~ \exists \lambda \in \R_{+} ) \, : \, w^{\lambda} / w^{\mu} \in L^{1} \cap C_{0}. \]
	\end{lemma}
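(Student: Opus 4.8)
The function $f := w^\lambda/w^\mu$ is automatically positive and continuous, so the only genuine content of the lemma is the decay at infinity (membership in $C_0$); integrability will come along as a byproduct. The plan is therefore to combine $[\operatorname{wM}]$ and $[\operatorname{N}]$ to produce, for the relevant index, a majorant of the form
\[
\sup_{|y| \leq 1} \frac{w^\lambda(x+y)}{w^\mu(x+y)} \leq C\, g(x), \qquad g := \frac{w^{\lambda'}}{w^{\mu'}} \in L^1,
\]
and then to convert this uniform control over unit balls into pointwise decay by an averaging argument.

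To obtain the majorant I would run the estimate behind \eqref{joo}, but with the intermediate index chosen through $[\operatorname{N}]$ rather than arbitrarily. In the Beurling case, given $\lambda$, first apply $(\operatorname{wM})$ to $\lambda$ to get $\lambda'$ with $\sup_{|y|\leq 1} w^\lambda(x+y) \leq C w^{\lambda'}(x)$ (the numerator bound); next invoke $(\operatorname{N})$ to choose $\mu'$ with $w^{\lambda'}/w^{\mu'} \in L^1$; finally apply $(\operatorname{wM})$ to $\mu'$ to get $\mu$ with $w^{\mu'}(x) \leq C w^\mu(x+y)$ for all $|y| \leq 1$ (the denominator bound). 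Multiplying the two bounds yields the displayed inequality with $g = w^{\lambda'}/w^{\mu'}\in L^1$, and the chain $\lambda \to \lambda' \to \mu' \to \mu$ respects the required quantifier order $\forall\lambda\,\exists\mu$. The Roumieu case is the same computation read backwards: given $\mu$, use $\{\operatorname{wM}\}$ to produce $\mu'$ for the denominator, then $\{\operatorname{N}\}$ to produce $\lambda'$ with $w^{\lambda'}/w^{\mu'}\in L^1$, and finally $\{\operatorname{wM}\}$ to produce $\lambda$ for the numerator, the chain $\mu \to \mu' \to \lambda' \to \lambda$ now realizing $\forall\mu\,\exists\lambda$.

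With the majorant in hand, taking $y = 0$ already gives $f \leq C g$, so $f \in L^1$. For the decay, the key observation is that the bound controls $f$ on a whole unit ball: for every $z$ and every $x$ with $|x-z| \leq 1$ one has $f(z) \leq \sup_{|y|\leq 1} f(x+y) \leq C g(x)$, hence $f(z) \leq C \inf_{x \in B(z,1)} g(x) \leq (C/V_d)\int_{B(z,1)} g(x)\,\dx$, where $V_d$ is the volume of the unit ball. Since $g \in L^1$, the right-hand side is dominated by $(C/V_d)\int_{|x| \geq |z|-1} g(x)\,\dx$, which tends to $0$ as $|z| \to \infty$; thus $f \in C_0$. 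I expect the main obstacle to be conceptual rather than computational: a continuous $L^1$ function need not vanish at infinity, so the integrability furnished by $[\operatorname{N}]$ alone is not enough. The role of $[\operatorname{wM}]$ is precisely to forbid tall thin spikes by tying the value of $f$ at a point to its values on a surrounding ball, and the averaging step is what turns the vanishing of the tails of $\int g$ into genuine pointwise decay of $f$.
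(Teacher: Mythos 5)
Your proof is correct and follows essentially the same route as the paper: the paper's one-line proof invokes \eqref{joo}, which is exactly the majorant $\sup_{|y|\leq 1} \bigl(w^{\lambda}/w^{\mu}\bigr)(x+y) \leq C\, w^{\lambda'}(x)/w^{\mu'}(x)$ that you re-derive, with the intermediate index $\mu'$ (resp.\ $\lambda'$) fixed through $[\operatorname{N}]$ so that the right-hand side is integrable. Your ball-averaging step, turning this uniform control into membership in $L^{1} \cap C_{0}$, is precisely the ``consequence'' the paper leaves implicit.
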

	\begin{proof}
	This is a consequence of \eqref{joo}.
	\end{proof}
	
Given a weight function system $\W$, we associate to it the K\"{o}the set 
$$
A_{\W} = \{ (w^{\lambda}(j))_{j \in \Z^{d}} \, | \, \lambda \in \R_{+} \}.
$$ 
The next result shows that the notion $[\operatorname{N}]$ is unambiguous. 
	\begin{lemma}
		\label{l:KotheNiffWN}
		Let $\W$ be a weight function system satisfying $[\operatorname{wM}]$. Then, $\W$ satisfies $[\operatorname{N}]$ if and only if $A_{\W}$ satisfies $[\operatorname{N}]$.
	\end{lemma}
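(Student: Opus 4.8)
The plan is to reduce the equivalence to two pointwise comparison estimates between the ``continuous'' ratios $w^{\lambda}/w^{\mu}$ and their lattice samples $(w^{\lambda}(j)/w^{\mu}(j))_{j\in\Z^d}$, both of which are consequences of $[\operatorname{wM}]$. Write $Q = [-1/2,1/2)^{d}$, so that the translates $j + Q$, $j \in \Z^{d}$, tile $\R^{d}$ and $\int_{\R^d} f = \sum_{j \in \Z^d} \int_{j+Q} f$ for every measurable $f \geq 0$. The two estimates I aim for are, for suitably shifted indices,
$$
\sum_{j \in \Z^d} \frac{w^{\lambda}(j)}{w^{\mu}(j)} \leq C \int_{\R^d} \frac{w^{\lambda'}(x)}{w^{\mu'}(x)} \, \dx \qquad \text{and} \qquad \int_{\R^d} \frac{w^{\lambda}(x)}{w^{\mu}(x)} \, \dx \leq C \sum_{j \in \Z^d} \frac{w^{\lambda'}(j)}{w^{\mu'}(j)}.
$$
Granting these, the two implications of the lemma follow by feeding the integrability provided by $[\operatorname{N}]$ (resp.\ the summability provided by $A_{\W}$ satisfying $[\operatorname{N}]$) into the respective right-hand sides.

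To obtain the first estimate I would bound $w^{\lambda}(j)/w^{\mu}(j)$ from above by $C\, w^{\lambda'}(x)/w^{\mu'}(x)$ uniformly for $x \in j + Q$, and then integrate over $j+Q$ and sum over $j$. This pointwise bound splits into an estimate for the numerator, $w^{\lambda}(j) \leq C_1 w^{\lambda'}(x)$, and one for the denominator, $w^{\mu'}(x) \leq C_2 w^{\mu}(j)$, each valid for $|x - j|$ at most the circumradius $\sqrt{d}/2$ of $Q$. Both are immediate applications of $[\operatorname{wM}]$ --- this is exactly the combined content of \eqref{joo} --- once one notes that $[\operatorname{wM}]$ controls shifts of size $\leq 1$ and hence, after iterating it finitely many times, shifts of size $\leq \sqrt{d}/2$. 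The second estimate is obtained symmetrically, bounding $w^{\lambda}(x)/w^{\mu}(x)$ by $C\, w^{\lambda'}(j)/w^{\mu'}(j)$ for $x \in j+Q$ and integrating.

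The one point that needs care --- and the only genuinely delicate step --- is the bookkeeping of the indices $\lambda, \lambda', \mu, \mu'$. The crucial observation is that the numerator estimate relates only $\lambda$ to $\lambda'$, while the denominator estimate relates only $\mu$ to $\mu'$, and these two applications of $[\operatorname{wM}]$ are entirely independent of one another. Consequently the four indices may be chosen in whatever order the target condition dictates, rather than in the single linear order in which they appear in \eqref{joo}. In the Beurling case this order already matches $(\operatorname{N})$: given $\lambda$ one first produces $\lambda'$, then invokes $(\operatorname{N})$ to obtain $\mu'$ from $\lambda'$, and finally produces $\mu$ from $\mu'$. In the Roumieu case, however, $\{\operatorname{N}\}$ reads $\forall \mu \, \exists \lambda$, so one must instead start from the given $\mu$, produce $\mu'$ from it, use $\{\operatorname{N}\}$ to obtain $\lambda'$ from $\mu'$, and only then produce $\lambda$ from $\lambda'$; this reordering is legitimate precisely because of the independence just noted. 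I expect this quantifier juggling in the Roumieu case to be the main obstacle, whereas the analytic content (the cube decomposition together with the iteration of $[\operatorname{wM}]$) is routine.
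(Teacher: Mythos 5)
Your proposal is correct and is essentially the paper's own argument: the paper's proof is the one-line remark that the lemma ``follows from \eqref{joo}'', and \eqref{joo} is exactly your two pointwise comparison estimates packaged into a single ratio inequality, with the quantifier prefix encoding precisely the independence of the numerator and denominator applications of $[\operatorname{wM}]$ that you use to reorder the choices in the Roumieu case. Your unit-cube decomposition and the integration/summation over cells is the implicit remaining content of the paper's reference, so the two arguments coincide.
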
	
	\begin{proof}
 	This again follows from \eqref{joo}.
	\end{proof}

\subsection{Weight sequence systems}\label{subsection weight sequence systems} A  sequence $M = (M_{\alpha})_{\alpha \in \N^{d}}$ of positive numbers is called a \emph{weight sequence}  on $\N^d$ if $\lim_{\alpha \to \infty} (M_{\alpha})^{1 / |\alpha |} =\infty$. We write $e_{j}$ for the standard coordinate unit vectors in $\mathbb{R}^{d}$, $j=1,\dots,d$. We consider the following conditions on a weight sequence $M$:
	\begin{itemize}
		\item[$(M.1)$] \emph{(log-convexity)} $M^2_{\alpha+e_{j}} \leq M_{\alpha} M_{\alpha+2e_{j}}$ for all $\alpha\in \mathbb{N}^{d}$ and $j \in \{1, \ldots, d\}$;
		\item[$(M.2)'$] \emph{(derivation-closedness)} $M_{\alpha + e_{j}} \leq C H^{|\alpha|} M_{\alpha}$ for all $\alpha\in \mathbb{N}^{d}$ and $j \in \{1, \ldots, d\}$, and some $C,H > 0$.
	\end{itemize}
The conditions $(M.1)$ and $(M.2)'$ are denoted by $(M_{\operatorname{lc}})$ and $(M_{\operatorname{dc}})$, respectively, in \cite{R-S-Composition}. We use here the standard notation from \cite{Komatsu}.  
The \emph{associated function} of $M$ is defined as
	\[ \omega_M(x) =  \sup_{\alpha \in \N^{d}}  \log \frac{|x^{\alpha}| M_{0}}{M_{\alpha}} , \qquad x\in\mathbb{R}^{d}. \]
Observe that 	$\omega_M(x) =  \omega_M(|x_1|, \ldots, |x_d|)$ for all $x = (x_1, \ldots, x_d) \in \R^d$. 

We define the tensor product of a finite number of  weight sequences $M_{j} = (M_{j,\alpha})_{\alpha \in \N^{d_j}}$ on $\N^{d_j}$, $j = 1, \ldots , k$, as 
$
M_1\otimes \dots \otimes M_k  = (M_{1,\alpha_{1}} \cdots M_{k,\alpha_{k}})_{(\alpha_1,\ldots, \alpha_k)\in \mathbb{N}^{d_1+\dots+d_k}}
$.
Note that $M_1\otimes \dots \otimes M_k$ satisfies $(M.1)$ ($(M.2)'$, respectively) if and only if this property holds for  each $M_j$. Moreover, 
$$
\omega_{M_{1}\otimes \dots\otimes M_{k}}(x)= \sum_{j=1}^{k}\omega_{M_j}(x_j), \qquad x=(x_1,\dots,x_k)\in \mathbb{R}^{d_1+\dots+d_{k}}.
$$

Next,  a \emph{weight sequence system} on $\R^d$ is a family  $\M = \{ M^{\lambda} \, | \, \lambda \in \R_{+} \}$ of weight sequences $M^\lambda$ on $\N^d$ satisfying $(M.1)$  such that $M_{\alpha}^{\lambda} \leq M_{\alpha}^{\mu}$ for all $\alpha \in \N^{d}$ and  $\lambda \leq \mu$. 
 We will work with some of the following conditions on a weight sequence system $\M$:
	\begin{itemize}
		\item[$(\operatorname{L})$] $\forall R > 0 ~ \forall \lambda \in \R_{+} ~ \exists \mu \in \R_{+} ~ \exists C > 0 ~ \forall \alpha \in \N^{d} \, : \, R^{|\alpha|} M_{\alpha}^{\mu} \leq C M_{\alpha}^{\lambda}$;
		\item[$\{\operatorname{L}\}$] $\forall R > 0 ~ \forall \mu \in \R_{+} ~ \exists \lambda \in \R_{+} ~ \exists C > 0 ~ \forall \alpha \in \N^{d} \, : \,  R^{|\alpha|} M_{\alpha}^{\mu} \leq C M_{\alpha}^{\lambda}$;
		\item[$(\mathfrak{M}.2)'$] $\forall \lambda \in \R_{+} ~ \exists \mu \in \R_{+} ~ \exists C,H >0 ~ \forall \alpha \in \N^{d}~\forall j \in \{ 1, \ldots, d \} \, : \, M_{\alpha + e_{j}}^{\mu} \leq CH^{|\alpha|} M_{\alpha}^{\lambda}$;
		\item[$\{\mathfrak{M}.2\}'$] $\forall \mu \in \R_{+} ~ \exists \lambda \in \R_{+} ~ \exists C,H >0 ~ \forall \alpha \in \N^{d}~\forall j \in \{ 1, \ldots, d \} \, : \, M_{\alpha + e_{j}}^{\mu} \leq CH^{|\alpha|} M_{\alpha}^{\lambda}$.
	\end{itemize}
The conditions $[\operatorname{L}]$ and $[\mathfrak{M}.2]'$ are denoted by $(\mathfrak{M}_{[\operatorname{L}]})$ and $(\mathfrak{M}_{[\operatorname{dc}]})$, respectively, in \cite{R-S-Composition}. Furthermore, $\M$ is called  \emph{accelerating} if $M_{\alpha+e_j}^{\lambda}/M_{\alpha}^{\lambda} \leq M_{\alpha+e_j}^{\mu}/M_{\alpha}^{\mu}$ for all $\alpha\in \N^d$, $j \in \{ 1, \ldots, d \}$, and  $\lambda \leq \mu$. 

We define the tensor product of a finite number of  weight sequence systems $\mathfrak{M}_{j}=\{M^{\lambda}_j  \;| \,\lambda\in \mathbb{R}_{+}\}$ on $\N^{d_j}$, $j = 1, \ldots , k$, as 
$$
\mathfrak{M}_{1}\otimes \dots \otimes \mathfrak{M}_{k}=\{M^{\lambda}_1\otimes \dots\otimes M^{\lambda}_k \;| \,\lambda\in \mathbb{R}_{+}\}. 
$$
Clearly, $\mathfrak{M}_{1}\otimes \dots \otimes \mathfrak{M}_{k}$ satisfies $[\operatorname{L}]$ ($[\mathfrak{M}.2]'$, respectively) if and only if each $\M_j$ does so.

We are interested in the ensuing special types of weight sequence systems. A weight sequence system $\M$ is called  \emph{isotropic} if, for each $\lambda \in \R_+$, $M^\lambda = (M^\lambda_{|\alpha|})_{\alpha\in \mathbb{N}^{d}}$ for a sequence $(M^\lambda_{p})_{p\in\mathbb{N}}$, which we identify with $M^\lambda$ itself.
Given a permutation $\sigma$ of the indices $\{1, \dots,d\}$, we write $\sigma(\mathfrak{M})=\{ (M^{\lambda}_{(\alpha_{\sigma(1)},\dots,\alpha_{\sigma(d)}) })_{(\alpha_1, \dots, \alpha_d)\in\N^{d}} \:|\:\lambda\in\mathbb{R}_{+}\}$. We call $\M$ \emph{isotropically decomposable} if there is a permutation $\sigma$ such that $\sigma(\M)=\mathfrak{M}_{1}\otimes \dots \otimes \mathfrak{M}_{k}$ with each $\M_{j}$ isotropic.  We also define these notions for single weight sequences in the natural way.

Given a weight sequence system $\M$, we associate to it the weight function system
$$
\W_\M = \{ e^{\omega_{M^\lambda}}  \, | \, \lambda \in \R_{+} \}.
$$ 	
If $\M$ is isotropically decomposable, various of the conditions on $\M$ and $\W_\M$ are related as follows.
\begin{lemma}\label{M2wss}
		Let $\M$ be an isotropically decomposable weight sequence system satisfying $[\operatorname{L}]$. 
		\begin{itemize}
		\item[$(a)$] $\W_{\M}$ satisfies $[\operatorname{M}]$.
		\item[$(b)$] Consider the following statements:
				\begin{itemize}
					\item[$(i)$] $\M$ satisfies $[\mathfrak{M}.2]'$.
					\item[$(ii)$] $A_{\W_{\M}}$ satisfies $[\operatorname{N}]$.
					\item[$(iii)$] $\W_{\M}$ satisfies $[\operatorname{N}]$.
				\end{itemize}
		\end{itemize}		
			Then, $(i) \Rightarrow (ii) \Leftrightarrow (iii)$. If in addition $\M$ is accelerating, then $(iii) \Rightarrow (i)$.
	\end{lemma}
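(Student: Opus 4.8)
The plan is to reduce everything to the isotropic one-dimensional setting and then exploit the duality between a log-convex weight sequence and its associated function. Since $\M$ is isotropically decomposable, after a permutation (which affects none of the conditions in play) we may write $\sigma(\M)=\M_1\otimes\cdots\otimes\M_k$ with each $\M_i$ isotropic. Because $\omega_{M^\lambda}$ is additive over tensor blocks, $\W_{\M}=\W_{\M_1}\otimes\cdots\otimes\W_{\M_k}$, and the tensor-product remarks already recorded show that each of $[\operatorname{L}]$, $[\mathfrak{M}.2]'$, $[\operatorname{M}]$, $[\operatorname{N}]$ (for $\W$), as well as the accelerating property and $[\operatorname{N}]$ for $A_{\W_\M}$, holds for $\M$ iff it holds for every factor $\M_i$ (for $[\operatorname{N}]$ one uses that the relevant integrals and sums factorize and each factor is bounded below near the origin). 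Thus it suffices to treat a single isotropic $\M$ on $\N^{d}$, where $M^\lambda_\alpha=M^\lambda_{|\alpha|}$ and hence $\omega_{M^\lambda}(x)=\omega_{M^\lambda}(\|x\|_\infty)$ with $\omega_{M^\lambda}$ now the one-dimensional associated function of the log-convex sequence $(M^\lambda_p)_p$; in particular $[\operatorname{N}]$ for $\W_\M$ becomes $\int_0^\infty e^{\omega_{M^\lambda}(t)-\omega_{M^\mu}(t)}t^{d-1}\,dt<\infty$.

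The engine for $(a)$ and for the direct implication is the translation of $[\operatorname{L}]$ into a statement about associated functions: from $R^{|\alpha|}M^\mu_\alpha\le CM^\lambda_\alpha$ one reads off $e^{\omega_{M^\mu}(x)}\le C'e^{\omega_{M^\lambda}(x/R)}$. For $(a)$, taking $R=2$ and using $\|x+y\|_\infty\le 2\max(\|x\|_\infty,\|y\|_\infty)$ together with monotonicity and nonnegativity of $\omega_{M^\lambda}$ gives $\omega_{M^\lambda}(x+y)\le\omega_{M^\mu}(x)+\omega_{M^\mu}(y)+\log C''$, which is $[\operatorname{M}]$; the Roumieu case is identical with the quantifiers exchanged and $\{\operatorname{L}\}$ in place of $(\operatorname{L})$. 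Since $[\operatorname{M}]\Rightarrow[\operatorname{wM}]$, Lemma~\ref{l:KotheNiffWN} applied to $\W_\M$ immediately yields $(ii)\Leftrightarrow(iii)$, so it remains to prove $(i)\Rightarrow(iii)$ and, under the accelerating hypothesis, $(iii)\Rightarrow(i)$.

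For $(i)\Rightarrow(iii)$ I would iterate $[\mathfrak{M}.2]'$ exactly $d+1$ times (which fixes a constant $H$) and then apply $[\operatorname{L}]$ with $R$ large (e.g.\ $R=2H$) to cancel the geometric factor, obtaining an index $\mu$ with $M^\mu_{p+d+1}\le C\,2^{-p}M^\lambda_p$ for all $p$. Evaluating at the index $p=p(t)$ realizing the supremum in $e^{\omega_{M^\lambda}(t)}=\sup_p t^pM_0^\lambda/M^\lambda_p$ and bounding $e^{-\omega_{M^\mu}(t)}\le M^\mu_{p+d+1}/t^{p+d+1}$ (up to a fixed constant) gives $e^{\omega_{M^\lambda}(t)-\omega_{M^\mu}(t)}\le C/t^{d+1}$; together with the trivial bound $\le 1$, this makes $e^{\omega_{M^\lambda}(t)-\omega_{M^\mu}(t)}t^{d-1}$ integrable on $(0,\infty)$, i.e.\ $(iii)$, hence $(ii)$.

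The main obstacle is the converse $(iii)\Rightarrow(i)$, and this is exactly where the accelerating hypothesis enters. The tool is the duality $M^\lambda_p=M^\lambda_0\sup_{t>0}t^p e^{-\omega_{M^\lambda}(t)}$, valid for log-convex sequences, so that $[\mathfrak{M}.2]'$ follows once I produce, for each $\lambda$, an index $\mu$ with $\omega_{M^\mu}(t)-\omega_{M^\lambda}(t)\ge d\log t-C$ for all $t\ge 1$. Writing $m^\lambda_p=M^\lambda_p/M^\lambda_{p-1}$ and using $\omega_{M^\mu}(t)-\omega_{M^\lambda}(t)=\int_0^t(\nu_\mu(s)-\nu_\lambda(s))s^{-1}\,ds$ with counting functions $\nu_\lambda(s)=\#\{p:m^\lambda_p\le s\}$, the accelerating property (which gives $m^\mu_p\le m^\lambda_p$ whenever $\mu\le\lambda$, hence $\nu_\mu\ge\nu_\lambda$) shows that $D:=\omega_{M^\mu}-\omega_{M^\lambda}$ is nondecreasing. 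Feeding the $\mu$ supplied by $(iii)$ into $\int_0^\infty e^{-D(t)}t^{d-1}\,dt<\infty$ and comparing $\int_t^{2t}e^{-D(s)}s^{d-1}\,ds\ge c\,t^d e^{-D(2t)}$ (which uses monotonicity of $D$) forces $t^d e^{-D(2t)}\to 0$, whence $D(t)\ge d\log t-C$; inserting this into the duality formula yields $M^\mu_{p+1}\le C'M^\lambda_p$, which is $[\mathfrak{M}.2]'$. The delicate point — and the reason the hypothesis cannot be dropped — is precisely the monotonicity of $D$: without accelerating one cannot convert the integral (averaged) decay of $e^{-D}$ into the pointwise lower bound on $D$ needed to run the duality backwards.
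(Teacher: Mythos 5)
Your proposal is correct and follows essentially the same route as the paper's proof: reduction to the isotropic case, part $(a)$ and $(i)\Rightarrow(ii)\Leftrightarrow(iii)$ via the associated-function translation of $[\operatorname{L}]$ (the paper's condition \eqref{cond-L}) together with Lemma \ref{l:KotheNiffWN}, and $(iii)\Rightarrow(i)$ via Komatsu's counting-function representation, monotonicity of the quotient coming from the accelerating hypothesis (your dyadic-shell comparison is just a variant of the paper's ball-averaging step), and the duality $M^\lambda_p=M^\lambda_0\sup_{t>0}t^pe^{-\omega_{M^\lambda}(t)}$. One slip worth fixing: the displayed ``engine'' inequality should read $e^{\omega_{M^\lambda}(Rx)}\le C'e^{\omega_{M^\mu}(x)}$, i.e.\ the \emph{given} index $\lambda$ on the left and the index $\mu$ produced by $[\operatorname{L}]$ on the right (as written, with the indices transposed, it is false in general), but every subsequent application in your argument uses the correct form, so nothing breaks.
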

	
	\begin{proof}  We may assume that $\M$ is isotropic, say,  $M^\lambda = (M^\lambda_{|\alpha|})_{\alpha\in \mathbb{N}^{d}}$ for a sequence $(M^\lambda_{p})_{p\in\mathbb{N}}$. We set 
$$
\eta^{\lambda}(t)=\sup_{p\in\mathbb{N}}\log \frac{t^{p}M_0^{\lambda}}{M^{\lambda}_p}, \qquad t\geq 0.
$$
Note that 
\begin{equation}
\eta^{\lambda}(d^{-1/2}|x|)\leq \omega_{M^{\lambda}}(x)\leq \eta^{\lambda}(|x|), \qquad x \in \R^d.
\label{equivalent}
\end{equation}

$(a)$ Condition $[\operatorname{L}]$ implies  that 	
		\begin{gather} 		
	\label{cond-L}			
					\forall R > 0 ~ \forall \lambda \in \R_{+} ~ \exists \mu \in \R_{+} ~ (\forall R > 0 ~ \forall \mu \in \R_{+} ~ \exists \lambda \in \R_{+}) ~ \exists C > 0 ~ \forall x \in \R^d \,: \\ \nonumber
					\omega_{M^{\lambda}}(R x) \leq \omega_{M^{\mu}}(x) + \log C.
				\end{gather}
Since $\eta^\lambda$ is increasing, \eqref{equivalent} implies that
$$
\omega_{M^\lambda}(x+y) \leq \omega_{M^\lambda}(2d^{1/2}x) +\omega_{M^\lambda}(2d^{1/2}y), \qquad x,y \in \R^d,
$$
the result follows from  \eqref{cond-L}.
		
		$(b)$ $(i) \Rightarrow (ii)$ This follows by combining  \eqref{cond-L} with the fact that  $[\mathfrak{M}.2]'$ implies  that
\begin{gather}
\label{cond-dc}
				\forall \lambda \in \R_{+} ~ \exists \mu \in \R_{+} ~ (\forall \mu \in \R_{+} ~ \exists \lambda \in \R_{+}) ~ \exists C',H' >0 ~ \forall x \in \R^d \, : \\ \nonumber
				\omega_{M^{\lambda}}(x) + \log (1+|x|)^{d}\leq  \omega_{M^{\mu}}(H'x) + \log C'. 
			\end{gather}
		
		$(ii) \Leftrightarrow (iii)$ In view of $(a)$, Lemma \ref{l:KotheNiffWN} yields the result.
		
		$(iii) \Rightarrow (i)$ \emph{(if $\M$ is accelerating)}  Define $m^{\lambda}_{p}= M^{\lambda}_{p}/M^{\lambda}_{p-1}$ for $p\geq1$ and let $m^{\lambda}(t)=\sum_{m^{\lambda}_{p}\leq t}1$ for $t \geq 0$. 
Set
$w^\lambda(x) = e^{\eta^{\lambda}(|x|)}$ for $x \in \R^d$. Note that the  weight function system  $\{ w^{\lambda}  \, | \, \lambda \in \R_{+} \}$ also satisfies $[N]$ because of \eqref{equivalent} and \eqref{cond-L}. 

It is well-known that  \cite[Equation (3.11)]{Komatsu} 
\[ 
w^{\lambda}(x) = \exp\left( \int_{0}^{|x|} \frac{m^{\lambda}(u)}{u} du
\right), \qquad x \in \R^d.
\]
Let $\lambda >0$ $(\mu >0)$  be arbitrary and choose  $\mu >0$ $(\lambda >0)$   such that $w^{\lambda}/w^{\mu} \in L^1$. In particular, $\mu \leq \lambda$. Since $\M$ is accelerating, we have that $m_{p}^{\mu} \leq m_{p}^{\lambda}$ for all $p \geq 1$ and thus $m^\lambda(t) \leq m^\mu(t)$ for all $t\geq 0$. Hence,			
\[\int_{t_1}^{t_2} \frac{m^{\lambda}(u) - m^{\mu}(u)}{u} du \leq 0\]
for all $t_2 \geq t_1 \geq 0$, which implies  that $w^\lambda(x)/w^\mu(x)$ is non-increasing in $|x|$ . Therefore, 
$$
|y|^d\frac{w^\lambda(y)}{w^\mu(y)}  \leq \frac{1}{|B(0,1)|}\int_{B(0,|y|)} \frac{w^\lambda(x)}{w^\mu(x)} dx \leq \frac{1}{|B(0,1)|} \int_{\R^d} \frac{w^\lambda(x)}{w^\mu(x)} dx  < \infty
$$ 
for all $y\in\R^{d}$. This implies \eqref{cond-dc} for $\{\eta^{\lambda}\:|\: \lambda\in\mathbb{R}_{+}\}$, namely,
\begin{gather*}
				\forall \lambda \in \R_{+} ~ \exists \mu \in \R_{+} ~ (\forall \mu \in \R_{+} ~ \exists \lambda \in \R_{+}) ~ \exists C',H' >0 \, \forall t \geq 0 \, : \, \\
				\eta^{\lambda}(t) + \log t \leq  \eta^{\mu}(H't) + \log C.
\end{gather*}
By combining the latter inequality with \cite[Proposition 3.2]{Komatsu}
\[
	M^{\lambda}_{p} = M_{0}^{\lambda} \sup_{t \geq 0} \frac{t^{p}}{e^{\eta^{\lambda}(t)}}, \qquad p \in \N,
\]
we obtain that $\M$ satisfies $[\M.2]'$.
\end{proof}

Finally, we present two examples of important instances of classes of weight sequence systems and weight function systems. Firstly, given a single weight sequence $M$ satisfying $(M.1)$, we  set $\M_{M} = \{ (\lambda^{|\alpha|} M_{\alpha})_{\alpha \in \N^{d}} \, | \, \lambda \in \R_{+} \}$ and $\W_{M} = \W_{\M_{M}} = \{ e^{\omega_M\left( \frac{\cdot}{\lambda} \right)} | \, \lambda \in \R_{+} \}$.
\begin{lemma} \label{lemma-1} Let $M$ be an isotropically decomposable weight sequence satisfying $(M.1)$. 
\begin{itemize} 
\item[$(a)$] $\M_{M}$ is accelerating and satisfies $[\operatorname{L}]$.
\item[$(b)$] $\W_{M}$ satisfies $[\operatorname{M}]$.
\item[$(c)$] $M$ satisfies $(M.2)'$ if and only if $\M_{M}$ satisfies $[\mathfrak{M}.2]'$  if and only if $\W_{M}$ satisfies $[\operatorname{N}]$.
\end{itemize}
\end{lemma}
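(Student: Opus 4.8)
The plan is to exploit the completely explicit form $M^{\lambda}_{\alpha} = \lambda^{|\alpha|} M_{\alpha}$ of the members of $\M_M$, so that every assertion reduces either to a one-line computation or to the already established Lemma \ref{M2wss}. Before anything else I would record that $\M_M$ inherits isotropic decomposability from $M$: if $\sigma(M) = M_1 \otimes \cdots \otimes M_k$ with each $M_j$ isotropic, then distributing $\lambda^{|\alpha|} = \prod_{j} \lambda^{|\alpha_j|}$ across the tensor factors gives $\sigma(\M_M) = \M_{M_1} \otimes \cdots \otimes \M_{M_k}$, and each $\M_{M_j} = \{ (\lambda^{|\alpha|} M_{j,|\alpha|})_{\alpha} \mid \lambda \in \R_+ \}$ is isotropic. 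This is exactly what is needed so that Lemma \ref{M2wss} becomes applicable to $\M_M$.

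For part $(a)$, the accelerating property is immediate, since $M^{\lambda}_{\alpha+e_j}/M^{\lambda}_{\alpha} = \lambda\, M_{\alpha+e_j}/M_{\alpha}$ is nondecreasing in $\lambda$, whence $\lambda \leq \mu$ yields the required inequality. For $[\operatorname{L}]$ I would absorb the factor $R^{|\alpha|}$ into the base of the power: as $R^{|\alpha|} M^{\mu}_{\alpha} = (R\mu)^{|\alpha|} M_{\alpha}$, the choice $\mu = \lambda/R$ in the Beurling case (resp.\ $\lambda = R\mu$ in the Roumieu case) forces $R^{|\alpha|} M^{\mu}_{\alpha} = M^{\lambda}_{\alpha}$, so that $C = 1$ works. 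Neither computation uses log-convexity or decomposability.

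Part $(b)$ then follows at once: $\M_M$ is isotropically decomposable and satisfies $[\operatorname{L}]$ by $(a)$, so Lemma \ref{M2wss}$(a)$ gives that $\W_{\M_M} = \W_M$ satisfies $[\operatorname{M}]$. For part $(c)$ I would split the chain into two equivalences. That $M$ satisfies $(M.2)'$ if and only if $\M_M$ satisfies $[\mathfrak{M}.2]'$ is a direct computation with $M^{\lambda}_{\alpha} = \lambda^{|\alpha|} M_{\alpha}$: taking $\mu = \lambda$ turns $(M.2)'$, with constants $C,H$, into $M^{\lambda}_{\alpha+e_j} \leq C\lambda\, H^{|\alpha|} M^{\lambda}_{\alpha}$; conversely, evaluating $[\mathfrak{M}.2]'$ at a single fixed level $\lambda$ (resp.\ $\mu$) and rescaling the constants recovers $(M.2)'$. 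The remaining equivalence, $\M_M$ satisfies $[\mathfrak{M}.2]'$ if and only if $\W_M$ satisfies $[\operatorname{N}]$, is precisely $(i) \Leftrightarrow (iii)$ of Lemma \ref{M2wss}$(b)$, which is available because $\M_M$ is isotropically decomposable, satisfies $[\operatorname{L}]$, and — crucially for the implication $(iii) \Rightarrow (i)$ — is accelerating, all by $(a)$.

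There is no genuine obstacle here. The only points requiring care are the bookkeeping of quantifiers and constants so that the Beurling and Roumieu cases are treated simultaneously under the bracket notation $[\,\cdot\,]$, and the verification that $\M_M$ is isotropically decomposable so that Lemma \ref{M2wss} may be invoked. Everything substantive — in particular the use of the accelerating property to pass back from the integrability condition $[\operatorname{N}]$ to the derivation-closedness $[\mathfrak{M}.2]'$ — is already packaged inside Lemma \ref{M2wss}.
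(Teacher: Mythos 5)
Your proof is correct and takes essentially the same approach as the paper: part $(a)$ by direct computation with $M^{\lambda}_{\alpha} = \lambda^{|\alpha|}M_{\alpha}$, and parts $(b)$ and $(c)$ by invoking Lemma~\ref{M2wss}, which is exactly the paper's (one-line) proof. The details you supply --- the inheritance of isotropic decomposability by $\M_M$ and the explicit equivalence between $(M.2)'$ for $M$ and $[\mathfrak{M}.2]'$ for $\M_M$ --- are precisely the steps the paper leaves implicit, and they check out.
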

\begin{proof}
Part $(a)$ is obvious, while $(b)$ and  $(c)$ have been established in Lemma \ref{M2wss}. 
\end{proof}
As a second example, following \cite[Section 5]{R-S-Composition}, we can also introduce weight sequence systems and weight function systems generated by a weight function in the sense of \cite{B-M-T}. We consider the following conditions on  a non-negative non-decreasing continuous function $\omega$ on  $[0,\infty)$:
	\begin{itemize}
		\item[$(\alpha)$] $\omega(2t) = O(\omega(t))$;
		\item[$(\gamma)$] $\log t = O(\omega(t))$;
		\item[$\{\gamma\}$] $\log t = o(\omega(t))$;
		\item[$(\delta)$] $\phi: [0, \infty) \rightarrow [0, \infty)$, $\phi(x) = \omega(e^{x})$, is convex.
	\end{itemize}
We call $\omega$ a  \emph{Braun-Meise-Taylor weight function (BMT weight function)} if $\omega_{|[0,1]} \equiv 0$ and $\omega$ satisfies $(\alpha)$, $\{\gamma\}$ and $(\delta)$.
In such a case, we define the \emph{Young conjugate} $\phi^{*}$ of $\phi$ as
	\[ \phi^{*} : [0, \infty) \rightarrow [0, \infty) , \qquad \phi^{*}(y) = \sup_{x \geq 0} (xy - \phi(x)). \]
Note that $\phi^{*}$ is convex and $y = o(\phi^{*}(y))$.
We define $\M_{\omega} = \{ M^{\lambda}_{\omega} \, | \,\lambda \in \R_{+}\}$, where $M^{\lambda}_{\omega} = \left(\exp\left(\frac{1}{\lambda} \phi^{*}(\lambda |\alpha| ) \right)\right)_{\alpha \in \N^{d}}$; the above stated properties of  $\phi^{*}$ imply that  $M^{\lambda}_{\omega}$ is an isotropic weight sequence satisfying $(M.1)$. Furthermore, we set $\W_{\omega} = \{ e^{\frac{1}{\lambda}\omega(| \, \cdot \, |)} \, | \, \lambda \in \R^{+} \}$ (for general $\omega$).
\begin{lemma} \label{lemma-2}
Let $\omega$ be a non-negative non-decreasing continuous function on  $[0,\infty)$. 
\begin{itemize}
\item[$(a)$]  If $\omega$ is a BMT weight function, then $\M_{\omega}$ satisfies $[\operatorname{L}]$ and $[\mathfrak{M}.2]'$.
\item[$(b)$] If $\omega$ satisfies $(\alpha)$, then $\W_{\omega}$ satisfies $[\operatorname{M}]$.
\item[$(c)$] $\omega$ satisfies $[\gamma]$ if and only if $\W_{\omega}$ satisfies $[\operatorname{N}]$.
\end{itemize}
\end{lemma}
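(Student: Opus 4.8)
The plan is to handle the three parts separately; only part $(a)$ calls for Young-conjugate machinery, while $(b)$ and $(c)$ reduce to direct estimates. Throughout I use that $\omega$ is non-decreasing and that the members of $\W_{\omega}$ are the radial weights $w^{\lambda}=e^{\frac1\lambda\omega(|\cdot|)}$, so $w^{\lambda}\le w^{\mu}$ precisely when $\mu\le\lambda$, consistent with the definition of a weight function system. For $(b)$ I would argue by hand: given $x,y\in\R^d$, the triangle inequality and monotonicity give $\omega(|x+y|)\le\omega(2\max(|x|,|y|))$, and one application of $(\alpha)$, written as $\omega(2t)\le L(\omega(t)+1)$, yields $\omega(|x+y|)\le L(\omega(|x|)+\omega(|y|))+L$. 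Dividing by $\lambda$ and exponentiating, the Beurling instance $(\operatorname{M})$ follows with $\mu=\nu=\lambda/L$ and $C=e^{L/\lambda}$; the Roumieu instance $\{\operatorname{M}\}$ follows by instead choosing $\lambda\ge L\max(\mu,\nu)$ for given $\mu,\nu$. No convexity is needed here.

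For $(c)$ I would pass to polar coordinates: with $\epsilon=\frac1\mu-\frac1\lambda$, the quotient $w^{\lambda}/w^{\mu}=e^{-\epsilon\omega(|\cdot|)}$ lies in $L^{1}(\R^d)$ iff $\int_1^\infty e^{-\epsilon\omega(r)}r^{d-1}\,dr$ converges (the part near $0$ is harmless, $\omega$ being bounded there). For $[\gamma]\Rightarrow[\operatorname{N}]$: in the Beurling case $(\gamma)$ gives $\omega(r)\ge c\log r-c'$, so choosing $\mu$ small enough that $\epsilon>d/c$ forces integrability and proves $(\operatorname{N})$; in the Roumieu case $\{\gamma\}$ says $\log r=o(\omega(r))$, whence $e^{-\epsilon\omega(r)}r^{d-1}$ is integrable for \emph{every} $\epsilon>0$, so any $\lambda>\mu$ works and $\{\operatorname{N}\}$ holds. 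For the converse $[\operatorname{N}]\Rightarrow[\gamma]$: from integrability, monotonicity of $\omega$ gives $e^{-\epsilon\omega(2r)}\int_r^{2r}s^{d-1}\,ds\le\int_r^{2r}e^{-\epsilon\omega(s)}s^{d-1}\,ds\to 0$, hence $e^{-\epsilon\omega(2r)}r^{d}\to 0$ and $\omega(2r)\ge\frac d\epsilon\log r$ for large $r$; reading this off for a single admissible pair recovers $(\gamma)$, while in the Roumieu case letting $\mu\to\infty$ forces $\epsilon\to 0^{+}$, so the constant $d/\epsilon\to\infty$, which is exactly $\{\gamma\}$.

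The heart of the lemma, and the step I expect to be hardest, is $(a)$. Here I would transfer the conditions on $\omega$ to conditions on $\M_{\omega}$ through the correspondence between $M^{\lambda}_{\omega}$ and its associated function, exactly as in the proof of Lemma~\ref{M2wss}: a computation with the Young conjugate shows $\omega_{M^{\lambda}_{\omega}}$ is comparable to $\frac1\lambda\omega$ (via \eqref{equivalent} together with $\phi^{**}=\phi$), so that Komatsu's representation $M^{\lambda}_{\omega,p}=M^{\lambda}_{\omega,0}\sup_{t\ge0}t^{p}e^{-\eta^{\lambda}(t)}$ \cite[Proposition 3.2]{Komatsu} converts estimates on $\omega$ into estimates on the sequence. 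Condition $[\mathfrak{M}.2]'$ is the milder one: it amounts to \eqref{cond-dc} for $\frac1\lambda\omega$, i.e. $\frac1\lambda\omega(t)+d\log(1+t)\le\frac1\mu\omega(H't)+\log C'$, which I would obtain from $(\gamma)$ (available since $\{\gamma\}\Rightarrow(\gamma)$) and monotonicity by arranging $\frac1\mu\ge\frac1\lambda+dL$.

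The delicate point is $[\operatorname{L}]$, equivalent to \eqref{cond-L} for $\frac1\lambda\omega$, namely $\frac1\lambda\omega(Rt)\le\frac1\mu\omega(t)+\log C$. Here superlinearity of $\phi^{*}$ alone is \emph{not} enough to produce the required multiplicative gap between $M^{\mu}_{\omega}$ and $M^{\lambda}_{\omega}$; one genuinely needs $(\alpha)$, which upon iteration yields $\omega(Rt)\le C_{R}(\omega(t)+1)$ and hence the desired scaling with $\mu=\lambda/C_{R}$. Alternatively, $(a)$ may simply be quoted from \cite[Section 5]{R-S-Composition}, where these are precisely the properties $(\mathfrak{M}_{[\operatorname{L}]})$ and $(\mathfrak{M}_{[\operatorname{dc}]})$ of the weight matrix attached to $\omega$. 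In either case the real work sits in verifying the comparison $\omega_{M^{\lambda}_{\omega}}\asymp\frac1\lambda\omega$ cleanly and in controlling the discrete-versus-continuous supremum entering the definition of $\eta^{\lambda}$.
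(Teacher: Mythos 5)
Your proposal is correct and follows essentially the same route as the paper's (very terse) proof: the paper handles $(a)$ by citing \cite[Corollary 5.15]{R-S-Composition} — precisely the alternative you offer — proves $(b)$ from the monotonicity of $\omega$ together with $(\alpha)$, as you spell out, and proves $(c)$ by the same monotonicity-plus-integrability argument, referring to the proof of implication $(iii)\Rightarrow(i)$ in Lemma \ref{M2wss}$(b)$. Your annulus estimate $e^{-\epsilon\omega(2r)}\int_r^{2r}s^{d-1}\,ds\le\int_r^{2r}e^{-\epsilon\omega(s)}s^{d-1}\,ds$ is just the polar-coordinate form of the ball-averaging bound used there, so the content is identical, with your write-up merely supplying the details the paper leaves implicit.
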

\begin{proof}
$(a)$ This is shown in \cite[Corollary 5.15]{R-S-Composition}.

$(b)$ This follows from the fact that $\omega$ is non-decreasing.

$(c)$ As $\omega$ is non-decreasing, this can be shown by using a similar argument as in the  proof of  implication $(iii) \Rightarrow (i)$ in Lemma \ref{M2wss}$(b)$.
\end{proof}
\section{Gelfand-Shilov spaces}\label{sect-3}
 In this section, we define and discuss the Gelfand-Shilov spaces $\mathcal{S}^{[\mathfrak{M}]}_{[\mathscr{W}],q}$. Let $M = (M_\alpha)_{\alpha \in \N^d}$ be a sequence of positive numbers and let $w$ be a non-negative function on $\R^d$. We define $\mathcal{S}^{M}_{w,q}= \mathcal{S}^{M}_{w,q}(\R^d) $, $q \in [1,\infty]$, as the seminormed space consisting of all $\varphi \in C^\infty(\R^d)$ such that
$$
\| \varphi \|_{\mathcal{S}^{M}_{w,q}} = \sup_{\alpha \in \N^d} \frac{1}{M_{\alpha}} \left (\int_{\R^d} (|\varphi^{(\alpha)}(x)| w(x))^q dx \right)^{1/q} < \infty, \qquad q \in [1,\infty),
$$
and
$$
\| \varphi \|_{\mathcal{S}^{M}_{w,\infty}} = \sup_{\alpha \in \N^d} \sup_{x \in \R^d} \frac{|\varphi^{(\alpha)}(x)| w(x) }{M_{\alpha}}< \infty. 
$$
If $w$ is positive and continuous, then  $\mathcal{S}^{M}_{w,q}$ is a Banach space. Given a weight sequence system $\mathfrak{M}$ and a weight function system $\mathscr{W}$, we define the \emph{Gelfand-Shilov spaces (of Beurling and Roumieu type)}
$$
\mathcal{S}^{(\mathfrak{M})}_{(\mathscr{W}),q} = \varprojlim_{\lambda \to 0^+} \mathcal{S}^{M^\lambda}_{w^\lambda,q}, \qquad \mathcal{S}^{\{\mathfrak{M}\}}_{\{\mathscr{W}\},q}= \varinjlim_{\lambda \to \infty} \mathcal{S}^{M^\lambda}_{w^\lambda,q}, \qquad q \in [1,\infty].
$$
Note that $\mathcal{S}^{(\mathfrak{M})}_{(\mathscr{W}),q}$ is a Fr\'echet space, while $\mathcal{S}^{\{\mathfrak{M}\}}_{\{\mathscr{W}\},q}$ is an $(LB)$-space. If $\W$ satisfies $[\operatorname{wM}]$, then $\mathcal{S}^{[\mathfrak{M}]}_{[\mathscr{W}],q}$ is translation-invariant; we shall tacitly use this fact in the sequel. Given two weight sequences $M$ and $A$, we define $\mathcal{S}^{[M]}_{[A],q} = \mathcal{S}^{[\mathfrak{M}_M]}_{[\mathscr{W}_A],q}$. Similarly, given a BMT weight function $\omega$ and a non-negative non-decreasing continuous function $\eta$ on $[0,\infty)$, we set $\mathcal{S}^{[\omega]}_{[\eta],q} = \mathcal{S}^{[\mathfrak{M}_\omega]}_{[\mathscr{W}_\eta],q}$. 
\begin{lemma}\label{regular}
Let $\mathfrak{M}$ be a weight sequence system, let $\mathscr{W}$ be a  weight function system and let $q \in [1,\infty]$. Then, the $(LB)$-space $\mathcal{S}^{\{\mathfrak{M}\}}_{\{\mathscr{W}\},q}$ is  regular.
\end{lemma}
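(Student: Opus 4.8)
The plan is to deduce regularity from the corresponding (and completely general) property of co-echelon spaces, by realizing $\mathcal{S}^{\{\mathfrak{M}\}}_{\{\mathscr{W}\},q}$ as a ``function on $\N^d \times \R^d$'' inside such a space. Recall that an $(LB)$-space $\varinjlim_n E_n$ is regular precisely when every bounded subset is contained in, and bounded in, a single step $E_n$. First I would record the monotonicity that drives everything: since $M^\lambda$ is increasing and $w^\lambda$ is decreasing in $\lambda$, for $\lambda \leq \mu$ one has $\|\varphi\|_{\mathcal{S}^{M^\mu}_{w^\mu,q}} \leq \|\varphi\|_{\mathcal{S}^{M^\lambda}_{w^\lambda,q}}$, so the linking maps $\mathcal{S}^{M^\lambda}_{w^\lambda,q} \to \mathcal{S}^{M^\mu}_{w^\mu,q}$ are norm-nonincreasing and the cofinal sequence $\lambda_n = n \to \infty$ exhibits $\mathcal{S}^{\{\mathfrak{M}\}}_{\{\mathscr{W}\},q}$ as a countable inductive limit of Banach spaces. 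This reduces the problem to the countable situation.

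Next I would introduce, for each $\lambda$, the Banach space $G_\lambda$ of (classes of) measurable $g : \N^d \times \R^d \to \C$ with
$\|g\|_{G_\lambda} := \sup_{\alpha \in \N^d} (M^\lambda_\alpha)^{-1}\,\|g(\alpha,\cdot)\,w^\lambda\|_{L^q(\R^d)} < \infty$, and set $G = \varinjlim_{\lambda \to \infty} G_\lambda$; the same monotonicity makes the linking maps contractive, so $G$ is again an $(LB)$-space. The map $\iota(\varphi) = (\varphi^{(\alpha)})_{\alpha \in \N^d}$ then satisfies, \emph{by the very definition of the two families of norms}, the identity $\|\iota(\varphi)\|_{G_\lambda} = \|\varphi\|_{\mathcal{S}^{M^\lambda}_{w^\lambda,q}}$ for every $\lambda$, so that $\iota : \mathcal{S}^{\{\mathfrak{M}\}}_{\{\mathscr{W}\},q} \to G$ is continuous and isometric at each level.

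The point of this norm identity is that it lets me bypass any closed-range or subspace-topology considerations. Indeed, let $B$ be bounded in $\mathcal{S}^{\{\mathfrak{M}\}}_{\{\mathscr{W}\},q}$; then $\iota(B)$ is bounded in $G$. Granting that $G$ is regular, there is an $N$ with $\iota(B) \subseteq G_N$ and $C := \sup_{\varphi \in B} \|\iota(\varphi)\|_{G_N} < \infty$. But $\|\iota(\varphi)\|_{G_N} = \|\varphi\|_{\mathcal{S}^{M^N}_{w^N,q}}$, whence $B \subseteq \mathcal{S}^{M^N}_{w^N,q}$ and $\sup_{\varphi \in B}\|\varphi\|_{\mathcal{S}^{M^N}_{w^N,q}} = C < \infty$, which is exactly regularity. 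Thus everything is reduced to establishing that $G$ is regular.

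Finally I would prove that $G$ is regular, and this is the step I expect to be the main obstacle. When $q = \infty$ this is immediate: writing $v^\lambda(\alpha,x) = w^\lambda(x)/M^\lambda_\alpha$, the space $G$ is precisely a co-echelon space of type $k_\infty$ over the $\sigma$-finite space $\N^d \times \R^d$, whose regularity is the locally-compact counterpart of the regularity of $\lambda^q\{A\}$ stated in Section \ref{sect-1} (cf.\ \cite[Corollary 7, p.~80]{Bierstedt}). For $1 \leq q < \infty$, however, $G$ is a genuinely \emph{mixed-norm} weighted co-echelon space (a supremum over $\alpha$ of $L^q$-norms in $x$), to which the scalar K\"othe result does not literally apply. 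The remedy is to reproduce the standard Bierstedt--Meise--Summers diagonal/projective-description proof of co-echelon regularity in this K\"othe--Bochner setting: that argument only uses the pointwise order structure of the weights $w^\lambda/M^\lambda_\alpha$ together with the fact that a bounded set is absorbed by every canonical $0$-neighborhood of the inductive limit, and is insensitive to replacing the inner supremum by an inner $L^q$-norm. Carrying out this mixed-norm regularity argument carefully is the technical heart of the proof.
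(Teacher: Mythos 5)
Your reduction steps are sound: the isometric identity $\|\iota(\varphi)\|_{G_\lambda}=\|\varphi\|_{\mathcal{S}^{M^\lambda}_{w^\lambda,q}}$ is correct, and it does transfer regularity from $G$ to $\mathcal{S}^{\{\mathfrak{M}\}}_{\{\mathscr{W}\},q}$. But there is a genuine gap exactly where you locate the ``technical heart'': the regularity of the mixed-norm space $G$ for $1\le q<\infty$ is never established. You assert that the Bierstedt--Meise--Summers co-echelon arguments ``only use the pointwise order structure of the weights'' and are ``insensitive to replacing the inner supremum by an inner $L^q$-norm,'' but this is neither checked nor a quotable result; since everything preceding it is a norm-preserving relabeling of the same data, this unproved claim carries the entire content of the lemma. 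Note also that even your $q=\infty$ case is not literally covered by the cited sequence-space result, because $G_\lambda$ consists of measurable functions on $\N^d\times\R^d$, not sequences (nor continuous functions on a locally compact space).

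The missing step actually needs no diagonal/projective-description machinery, and once you supply it the detour through $G$ becomes superfluous. The relevant general fact --- the one the paper invokes from \cite[p.~80, Corollary 7]{Bierstedt} --- is that an $(LB)$-space is regular provided the closed unit ball of each step is closed in the inductive limit; closedness in any coarser Hausdorff locally convex topology suffices. For your $G$ this would follow because each ball of $G_\lambda$ is closed in the Fr\'echet space $L^q_{\loc}(\N^d\times\R^d)$ (pass to an a.e.\ convergent subsequence and apply Fatou's lemma), and each $G_\lambda$ embeds continuously into $L^q_{\loc}$, so by the universal property the inductive limit topology of $G$ is finer. But the same two ingredients apply directly to $\mathcal{S}^{\{\mathfrak{M}\}}_{\{\mathscr{W}\},q}$ itself: since $w^\lambda\ge 1$, every step embeds continuously into $\mathcal{D}_{L^q}\subset\mathcal{B}$, convergence in $\mathcal{B}$ gives pointwise convergence of all derivatives, and Fatou's lemma shows each unit ball $B_\lambda$ is closed in $\mathcal{B}$, hence in the inductive limit. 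That is precisely the paper's proof; your construction of $G$ adds bookkeeping without removing the need for this argument, so you should either carry it out for $G$ or, more economically, drop $G$ and apply the criterion to the Gelfand--Shilov steps directly.
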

\begin{proof}
By \cite[p.\ 80, Corollary 7]{Bierstedt}, it suffices to show that, for each $\lambda >0$, the closed unit ball $B_\lambda$ in $\mathcal{S}^{M^\lambda}_{w^\lambda,q}$ is closed in $\mathcal{S}^{\{\mathfrak{M}\}}_{\{\mathscr{W}\},q}$. Note that $\mathcal{S}^{\{\mathfrak{M}\}}_{\{\mathscr{W}\},q} \subset \mathcal{D}_{L^q} \subset \mathcal{B}$ with continuous inclusion; the first inclusion is a consequence of the fact that $1 \leq w^\lambda$ for all $\lambda >0$ and the second one is a classical result of Schwartz \cite{Schwartz}. Therefore, it is enough to prove that $B_\lambda$ is closed in $\mathcal{B}$. Let $(\varphi_n)_{n \in \N}$ be a sequence in $B_\lambda$ and $\varphi \in \mathcal{B}$ such that $\varphi_n \rightarrow \varphi$ in $\mathcal{B}$. In particular, $\varphi_n^{(\alpha)}(x) \to \varphi^{(\alpha)}(x)$ for all $\alpha \in \N^d$ and $x \in \R^d$. Hence, we obtain that
$$
\| \varphi^{(\alpha)} w_\lambda\|_{L^q} \leq \liminf_{n \to \infty} \|\varphi_n^{(\alpha)} w^{\lambda}\|_{L^q} \leq M^\lambda_{\alpha}
$$
for all $\alpha \in \N^d$, where we have used  Fatou's lemma for $q < \infty$. This shows that $\varphi \in B_\lambda$ and the proof is complete.
\end{proof}

We now study when the equality  $\mathcal{S}^{[\mathfrak{M}]}_{[\mathscr{W}],q} = \mathcal{S}^{[\mathfrak{M}]}_{[\mathscr{W}],r}$ holds.
\begin{theorem}\label{ft-char-N}
Let $\mathfrak{M}$ be a weight sequence system satisfying $[\operatorname{L}]$ and $[\mathfrak{M}.2]'$, and let $\mathscr{W}$ be a weight function system satisfying $[\operatorname{wM}]$. Suppose that  $\mathcal{S}^{[\mathfrak{M}]}_{[\mathscr{W}],q} \neq \{0\}$ for some $q \in [1,\infty]$. Consider the following statements:
\begin{itemize}
\item[$(i)$] $\mathscr{W}$ satisfies $[\operatorname{N}]$.
\item[$(ii)$] $\mathcal{S}^{[\mathfrak{M}]}_{[\mathscr{W}],q} = \mathcal{S}^{[\mathfrak{M}]}_{[\mathscr{W}],r}$ as locally convex spaces for all $q,r \in  [1,\infty]$.
\item[$(iii)$] $\mathcal{S}^{[\mathfrak{M}]}_{[\mathscr{W}],q} = \mathcal{S}^{[\mathfrak{M}]}_{[\mathscr{W}],r}$ as sets for some  $q,r \in  [1,\infty]$ with $q \neq r$.
\end{itemize}
Then, $(i) \Rightarrow (ii) \Rightarrow (iii)$. If  in addition $\mathscr{W}$ satisfies $[\operatorname{M}]$, then also $(iii) \Rightarrow (i)$.
\end{theorem}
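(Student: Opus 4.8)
The plan is to run the cycle $(i)\Rightarrow(ii)\Rightarrow(iii)\Rightarrow(i)$ by comparing every $\mathcal{S}^{[\M]}_{[\W],q}$ with the endpoint space $\mathcal{S}^{[\M]}_{[\W],\infty}$ through two chains of continuous inclusions, and then, for the converse, by transferring the set-theoretic equality in $(iii)$ down to the associated K\"othe sequence spaces, where Proposition \ref{nuclear-echelon} is available. For $(i)\Rightarrow(ii)$ I will show that for each $q$ the identity induces continuous inclusions $\mathcal{S}^{[\M]}_{[\W],q}\hookrightarrow\mathcal{S}^{[\M]}_{[\W],\infty}\hookrightarrow\mathcal{S}^{[\M]}_{[\W],q}$, so that all these spaces coincide as locally convex spaces; $(ii)\Rightarrow(iii)$ is immediate (take any $q\neq r$); and for $(iii)\Rightarrow(i)$ I will use $[\operatorname{M}]$ to realize $\lambda^q[A_{\W}]$ as a retract of $\mathcal{S}^{[\M]}_{[\W],q}$ by $q$-independent maps, forcing $\lambda^q[A_{\W}]=\lambda^r[A_{\W}]$ as sets.

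For the inclusion $\mathcal{S}^{[\M]}_{[\W],q}\hookrightarrow\mathcal{S}^{[\M]}_{[\W],\infty}$, which does not use $[\operatorname{N}]$, I fix $k>d/q$ and combine a local Sobolev inequality on unit balls with the three structural hypotheses: $[\operatorname{wM}]$ lets me bound $w^{\lambda}(x)$ by $w^{\mu}$ at nearby points and pull the weight inside the $L^{q}$-norm over $B(x,1)$; $[\mathfrak{M}.2]'$ absorbs the $k$ extra derivatives produced by Sobolev at the cost of a factor $H^{|\alpha|}$ and a finite shift of the index; and $[\operatorname{L}]$ then absorbs $H^{|\alpha|}$ into one further change of index. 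Iterating derivation-closedness a fixed number $k$ of times is routine. For the reverse inclusion $\mathcal{S}^{[\M]}_{[\W],\infty}\hookrightarrow\mathcal{S}^{[\M]}_{[\W],q}$ I invoke $[\operatorname{N}]$: by Lemma \ref{lemma-C0} the index $\mu$ may be chosen with $M^{\mu}\leq M^{\lambda}$ and $w^{\lambda}/w^{\mu}\in L^{1}\cap C_{0}$, hence bounded, hence in $L^{q}$ for every $q\geq 1$. Writing $|\varphi^{(\alpha)}|w^{\lambda}=(|\varphi^{(\alpha)}|w^{\mu})(w^{\lambda}/w^{\mu})$ gives $\|\varphi^{(\alpha)}w^{\lambda}\|_{L^{q}}\leq M^{\mu}_{\alpha}\,\|w^{\lambda}/w^{\mu}\|_{L^{q}}\,\|\varphi\|_{\mathcal{S}^{M^{\mu}}_{w^{\mu},\infty}}$, and $M^{\mu}_{\alpha}\leq M^{\lambda}_{\alpha}$ makes the supremum over $\alpha$ harmless. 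Matching the two chains yields $\mathcal{S}^{[\M]}_{[\W],q}=\mathcal{S}^{[\M]}_{[\W],\infty}$ for every $q$, hence $(ii)$.

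The substantial implication is $(iii)\Rightarrow(i)$, where the extra hypothesis $[\operatorname{M}]$ enters decisively. I fix a nonzero, well-localized $\varphi_{0}\in\mathcal{S}^{[\M]}_{[\W],q}=\mathcal{S}^{[\M]}_{[\W],r}$ (provided by the nonvanishing hypothesis) and, on a sufficiently coarse lattice $R\Z^{d}$, consider the synthesis map $S\colon c\mapsto\sum_{j}c_{j}\varphi_{0}(\cdot-Rj)$ together with a coefficient map obtained by integrating against a fixed dual bump localized near the lattice points. The submultiplicativity $[\operatorname{M}]$, together with the decay of $\varphi_{0}$, is exactly what makes $S$ bounded from $\lambda^{q}[A_{\W}]$ into $\mathcal{S}^{[\M]}_{[\W],q}$ and the coefficient map bounded in the reverse direction, for each fixed $q$ and through formulas not depending on $q$; taking $R$ large makes the associated Gram-type matrix diagonally dominant, hence boundedly invertible on each $\lambda^{q}[A_{\W}]$, so a corrected coefficient map $T$ satisfies $T\circ S=\operatorname{id}$. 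Then $\mathcal{S}^{[\M]}_{[\W],q}=\mathcal{S}^{[\M]}_{[\W],r}$ as sets forces $\lambda^{q}[A_{\W}]=\lambda^{r}[A_{\W}]$ as sets: for $c\in\lambda^{q}[A_{\W}]$ we have $Sc\in\mathcal{S}^{[\M]}_{[\W],q}=\mathcal{S}^{[\M]}_{[\W],r}$ and hence $c=T(Sc)\in\lambda^{r}[A_{\W}]$, and symmetrically. Proposition \ref{nuclear-echelon} (the implication $(v)\Rightarrow(i)$) then gives that $A_{\W}$ satisfies $[\operatorname{N}]$, and Lemma \ref{l:KotheNiffWN} upgrades this to $[\operatorname{N}]$ for $\W$, which is $(i)$.

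I expect the main obstacle to be the construction of the left inverse $T$ in the last step, since $\varphi_{0}$ need not be compactly supported: the translates $\varphi_{0}(\cdot-Rj)$ overlap, and one must control these overlaps and establish invertibility of the synthesis-analysis system uniformly, by the same formula, across the two values of $q$. This is precisely where $[\operatorname{M}]$ and the quantitative decay of $\varphi_{0}$ are indispensable, and where the coarse lattice spacing is used to guarantee diagonal dominance. The remaining index bookkeeping (matching the Beurling and Roumieu quantifier patterns in $[\operatorname{L}]$, $[\mathfrak{M}.2]'$, $[\operatorname{wM}]$ and $[\operatorname{N}]$) is routine and will be carried out following the bracket conventions of the paper.
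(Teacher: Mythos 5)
Your chain $(i)\Rightarrow(ii)\Rightarrow(iii)$ is sound and essentially the paper's own argument: the embedding $\mathcal{S}^{[\mathfrak{M}]}_{[\mathscr{W}],q}\hookrightarrow\mathcal{S}^{[\mathfrak{M}]}_{[\mathscr{W}],\infty}$ from $[\operatorname{wM}]$, $[\mathfrak{M}.2]'$ and $[\operatorname{L}]$ (the paper uses the convolution identity $\varphi=(\partial\varphi)\ast(H\psi)-\varphi\ast\chi$ and Jensen's inequality where you use a local Sobolev inequality -- a cosmetic difference), and the reverse embedding from H\"older's inequality together with $[\operatorname{N}]$ and Lemma \ref{lemma-C0}, which is exactly Lemma \ref{reverse-inclusion}.

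The genuine gap is in $(iii)\Rightarrow(i)$, at precisely the point you flagged as the ``main obstacle.'' Your plan to invert the synthesis--analysis Gram matrix by taking a coarse lattice $R\Z^{d}$ and invoking diagonal dominance does not work in this setting, because the individual weights $w^{\lambda}$ are \emph{not} submultiplicative: condition $[\operatorname{M}]$ only gives $w^{\lambda}(x+y)\leq C w^{\mu}(x)w^{\nu}(y)$ with a \emph{change of index}. Consequently, the Schur-type smallness of the off-diagonal part $E=G-\operatorname{id}$ is an estimate from $l^{q}((w^{\mu}(Rj))_{j})$ into $l^{q}((w^{\lambda}(Rj))_{j})$ with $\mu\neq\lambda$; it is not an operator-norm bound on any single weighted space. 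Iterating $E$ in a Neumann series then produces an index chain $\lambda\to\mu_{1}\to\mu_{2}\to\cdots$ whose constants and indices you cannot control as $n\to\infty$, and for a fixed sequence $c$ in the projective limit the norms $\|c\|_{l^{q}(a^{\mu_{n}})}$ may grow arbitrarily fast, so the series $\sum_{n}(-E)^{n}c$ need not converge in any $l^{q}(a^{\lambda})$; the same index-shifting problem blocks the argument in the inductive (Roumieu) limit. Taking $R$ large shrinks each individual estimate but does not repair the fact that no single space carries the iteration. (A second, minor, issue: your K\"othe spaces live on the lattice $R\Z^{d}$, so even granting invertibility you would still need a coarse-lattice analogue of Lemma \ref{l:KotheNiffWN} to conclude $[\operatorname{N}]$ for $\mathscr{W}$.) The paper sidesteps the inversion problem entirely: by Lemma \ref{construction}$(a)$, multiplying a normalized nonzero element of $\widetilde{\mathcal{S}}^{[\mathfrak{M}]}_{[\mathscr{W}]}$ by $\chi=\mathcal{F}(1_{[-\frac{1}{2},\frac{1}{2}]^{d}})$ -- a sinc-type function that equals $1$ at $0$ and vanishes at all other integers -- produces $\psi\in\widetilde{\mathcal{S}}^{[\mathfrak{M}]}_{[\mathscr{W}]}$ with $\psi(j)=\delta_{j,0}$ \emph{exactly}. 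Then the sampling map $S_{q}(\varphi)=(\varphi(j))_{j\in\Z^{d}}$ of Proposition \ref{compl-1} (which needs only $[\operatorname{wM}]$) is an exact left inverse of the synthesis map $T_{\psi,r}$ of Proposition \ref{compl-2} (which needs $[\operatorname{M}]$), with no matrix to invert and with $q$-independent formulas, after which your transfer of the set equality to $\lambda^{q}[A_{\mathscr{W}}]=\lambda^{r}[A_{\mathscr{W}}]$ and the appeal to Proposition \ref{nuclear-echelon} and Lemma \ref{l:KotheNiffWN} go through verbatim. Replacing your approximate-duality construction by this exact interpolation trick is the missing idea.
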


We need several results in preparation for the proof of Theorem \ref{ft-char-N}.  

\begin{lemma} \label{reverse-inclusion}
Let $\mathfrak{M}$ be a weight sequence system  and let  $\mathscr{W}$ be a weight function system satisfying $[\operatorname{wM}]$ and $[\operatorname{N}]$. Then, $\mathcal{S}^{[\mathfrak{M}]}_{[\mathscr{W}],q} \subseteq \mathcal{S}^{[\mathfrak{M}]}_{[\mathscr{W}],r}$ with continuous inclusion for all $q,r \in  [1,\infty]$ with $q \geq r$. 
\end{lemma}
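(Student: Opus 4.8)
The plan is to prove the inclusion first between the Banach building blocks $\mathcal{S}^{M^\lambda}_{w^\lambda,q}$ and only then transfer it to the projective limit (Beurling case) and to the inductive limit (Roumieu case). Concretely, I would aim for step-wise estimates of the form $\|\varphi\|_{\mathcal{S}^{M^a}_{w^a,r}} \le C\,\|\varphi\|_{\mathcal{S}^{M^b}_{w^b,q}}$, with the indices $a,b$ chosen so as to match the quantifier pattern of $[\operatorname{N}]$. Once such estimates are available, both the set-theoretic inclusion and its continuity drop out of the very definitions of $\mathcal{S}^{(\mathfrak{M})}_{(\mathscr{W}),q}$ and $\mathcal{S}^{\{\mathfrak{M}\}}_{\{\mathscr{W}\},q}$ as $\varprojlim$ and $\varinjlim$.

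The single computation behind everything is Hölder's inequality. Since $q \ge r$, the exponent $s \in [1,\infty]$ determined by $1/s = 1/r - 1/q$ is well defined, and factoring $w^a = w^b \cdot (w^a/w^b)$ gives, for every multi-index $\alpha$,
$$
\|\varphi^{(\alpha)} w^a\|_{L^r} \le \|\varphi^{(\alpha)} w^b\|_{L^q}\,\|w^a/w^b\|_{L^s}.
$$
The one delicate point, which I expect to be the main obstacle, is that $[\operatorname{N}]$ guarantees only that a weight quotient lies in $L^1$, whereas Hölder requires its $L^s$-norm to be finite for the possibly larger exponent $s$. This gap is closed by the monotonicity built into a weight function system: any two members of $\mathscr{W}$ are pointwise comparable, and if $w^a/w^b \in L^1$ then necessarily $w^a \le w^b$ pointwise, for otherwise $w^a/w^b \ge 1$ everywhere and, since every weight is $\ge 1$, the quotient could not be integrable on $\R^d$. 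Hence $0 \le w^a/w^b \le 1$, so $(w^a/w^b)^s \le w^a/w^b$ and $\|w^a/w^b\|_{L^s} \le \|w^a/w^b\|_{L^1}^{1/s} < \infty$. The very same ordering of indices forces $M^{b}_\alpha \le M^{a}_\alpha$, so dividing the displayed inequality by $M^a_\alpha$, using $1/M^a_\alpha \le 1/M^b_\alpha$, and taking the supremum over $\alpha$ yields the step-wise estimate with constant $C = \|w^a/w^b\|_{L^s}$.

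It then remains to fit the index choices to the two cases. In the Beurling case I would fix the target level $\lambda$ and apply $(\operatorname{N})$ to obtain $\mu$ (automatically $\le \lambda$) with $w^\lambda/w^\mu \in L^1$; the estimate with $a=\lambda$, $b=\mu$ says precisely that each continuous seminorm of $\mathcal{S}^{(\mathfrak{M})}_{(\mathscr{W}),r}$ is dominated by one of $\mathcal{S}^{(\mathfrak{M})}_{(\mathscr{W}),q}$, which gives the continuous inclusion of Fr\'echet spaces. In the Roumieu case I would instead fix the source level $\mu$ and apply $\{\operatorname{N}\}$ to obtain $\lambda \ge \mu$ with $w^\lambda/w^\mu \in L^1$; the estimate with $a=\lambda$, $b=\mu$ now furnishes a continuous linear map from the $\mu$-th step of $\mathcal{S}^{\{\mathfrak{M}\}}_{\{\mathscr{W}\},q}$ into the $\lambda$-th step of $\mathcal{S}^{\{\mathfrak{M}\}}_{\{\mathscr{W}\},r}$, whence the continuous inclusion of the inductive limits follows from the universal property. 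The boundary exponents, namely $q=\infty$ and $q=r$ (where $s=\infty$), are covered by the identical reasoning, reading $\|\cdot\|_{L^s}$ as an $L^\infty$-norm where appropriate, so no separate argument is required; everything beyond the $L^1$-versus-$L^s$ step is merely bookkeeping of indices.
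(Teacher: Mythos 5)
Your proof is correct, and its computational core --- H\"older's inequality with the exponent $s$ determined by $1/s = 1/r - 1/q$, applied level-wise with separate quantifier bookkeeping for the Beurling and Roumieu cases --- is exactly the paper's. The difference is how the weight quotient is upgraded from $L^1$ to $L^s$: the paper's one-line proof invokes Lemma \ref{lemma-C0}, which uses $[\operatorname{wM}]$ (through \eqref{joo}) to produce a quotient in $L^1 \cap C_0$, hence bounded, hence in every $L^s$; you instead observe that any two members of a weight function system are pointwise comparable (the index set $\R_+$ is totally ordered), so $w^a/w^b \in L^1(\R^d)$ forces $w^a \le w^b$ pointwise --- otherwise the quotient would be $\ge 1$ on all of $\R^d$, which has infinite Lebesgue measure --- whence $0 < w^a/w^b \le 1$ and $\|w^a/w^b\|_{L^s} \le \|w^a/w^b\|_{L^1}^{1/s}$. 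This is more elementary and actually proves a slightly stronger statement: the hypothesis $[\operatorname{wM}]$ is not needed for this lemma at all. The same comparability observation also settles the ordering of indices ($\mu \le \lambda$, hence $M^\mu_\alpha \le M^\lambda_\alpha$), a point the paper leaves implicit in its citation of Lemma \ref{lemma-C0}. One minor quibble: in your contradiction step the operative fact is that $\R^d$ has infinite measure, not that the weights themselves are $\ge 1$; the conclusion is unaffected. What the paper's route buys in exchange for being less self-contained here is reuse: Lemma \ref{lemma-C0} is needed again later anyway, e.g.\ to verify condition $(\operatorname{S})$ in the projective description section.
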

\begin{proof}
This follows from H\"older's inequality and Lemma \ref{lemma-C0}. 
\end{proof}
Given a weight sequence system $\mathfrak{M}$ and a weight function system $\mathscr{W}$, we introduce the auxiliary spaces
$$
\widetilde{\mathcal{S}}^{(\mathfrak{M})}_{(\mathscr{W})} = \bigcap_{\lambda > 0} \bigcap_{k \in \N} \mathcal{S}^{M^\lambda}_{(1+|\,\cdot\,|)^k w^\lambda,\infty}, \qquad \widetilde{\mathcal{S}}^{\{\mathfrak{M}\}}_{\{\mathscr{W}\}} = \bigcup_{\lambda > 0} \bigcap_{k \in \N} \mathcal{S}^{M^\lambda}_{(1+|\,\cdot\,|)^k w^\lambda,\infty}.
$$
\begin{lemma}\label{non-trivial}
Let $\mathfrak{M}$ be a weight sequence system satisfying $[\operatorname{L}]$ and let $\mathscr{W}$ be a weight function system satisfying $[\operatorname{wM}]$. The following statements are equivalent:
\begin{itemize}
\item[$(i)$] $\mathcal{S}^{[\mathfrak{M}]}_{[\mathscr{W}],q} \neq \{0\}$ for all $q \in [1,\infty]$.
\item[$(ii)$] $\mathcal{S}^{[\mathfrak{M}]}_{[\mathscr{W}],q} \neq \{0\}$ for some $q \in [1,\infty]$.
\item[$(iii)$] $\widetilde{\mathcal{S}}^{[\mathfrak{M}]}_{[\mathscr{W}]} \neq \{0\}$.
\end{itemize}
\end{lemma}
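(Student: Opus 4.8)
The plan is to establish the cycle $(i) \Rightarrow (ii) \Rightarrow (iii) \Rightarrow (i)$. The implication $(i) \Rightarrow (ii)$ is trivial, and $(iii) \Rightarrow (i)$ is straightforward: if $0 \neq \psi \in \widetilde{\mathcal{S}}^{[\mathfrak{M}]}_{[\mathscr{W}]}$, then for every $k$ one has $\| \psi^{(\alpha)} (1 + |\cdot|)^k w^\lambda \|_{L^\infty} \leq C_k M^\lambda_\alpha$ (for all $\lambda$ in the Beurling case, for one fixed $\lambda$ in the Roumieu case). Choosing $k$ with $kq > d$ and factoring out $(1+|\cdot|)^{-k} \in L^q$ via H\"older's inequality yields $\| \psi^{(\alpha)} w^\lambda \|_{L^q} \leq C M^\lambda_\alpha$, so $0 \neq \psi \in \mathcal{S}^{[\mathfrak{M}]}_{[\mathscr{W}],q}$ for every $q \in [1,\infty]$ (the case $q = \infty$ being immediate with $k = 0$). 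Thus the whole content of the lemma lies in $(ii) \Rightarrow (iii)$.

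For $(ii) \Rightarrow (iii)$ I would proceed in two steps. First, I reduce to the case $q = \infty$. Starting from $0 \neq \varphi \in \mathcal{S}^{M^\lambda}_{w^\lambda, q}$, I convolve with a compactly supported mollifier $\rho \in C^\infty_c$ chosen so that $\widetilde{\varphi} := \varphi * \rho \neq 0$. Since $\widetilde{\varphi}^{(\alpha)} = \varphi^{(\alpha)} * \rho$, only the $\alpha$-th derivative of $\varphi$ enters, so the multi-index is not shifted; estimating $w^\lambda(x)$ against $w^\mu(x - t)$ for $|t| \leq 1$ by $[\operatorname{wM}]$ and applying H\"older's inequality in $t$ gives $\| \widetilde{\varphi}^{(\alpha)} w^\lambda \|_{L^\infty} \leq C \| \varphi^{(\alpha)} w^\mu \|_{L^q} \| \rho \|_{L^{q'}} \leq C M^\mu_\alpha$. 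Tracking the quantifiers in $[\operatorname{wM}]$ then places $\widetilde{\varphi}$ in $\mathcal{S}^{[\mathfrak{M}]}_{[\mathscr{W}],\infty}$, so we may assume $q = \infty$ from the outset. Note that no ultradifferentiability of $\rho$ is required and the weight sequence $M^\lambda$ is left untouched.

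Second, I manufacture the extra polynomial decay. Let $\chi = \mathcal{F}^{-1} \theta$ for a nonzero $\theta \in C^\infty_c$; then $\chi$ is Schwartz and band-limited, and integration by parts on the Fourier side (where $(1+|\cdot|)^k$ turns into derivatives falling on $(i\xi)^\gamma \theta$, supported in a ball of radius $R$) gives the key bound $(1 + |x|)^k |\chi^{(\gamma)}(x)| \leq C_k (R')^{|\gamma|}$ for every $k$ and any $R' > R$. Setting $\psi = \widetilde{\varphi} \cdot \chi$ and applying Leibniz' rule, each term of $\psi^{(\alpha)}$ factors exactly so that $w^\lambda$ acts on $\widetilde{\varphi}^{(\beta)}$ and the polynomial weight on $\chi^{(\alpha - \beta)}$, giving $(1+|x|)^k |\psi^{(\alpha)}(x)| w^\lambda(x) \leq C_k \sum_{\beta \leq \alpha} \binom{\alpha}{\beta} M^\lambda_\beta (R')^{|\alpha - \beta|}$. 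The comparison $M^\lambda_\beta (R')^{|\alpha-\beta|} \leq C M^\lambda_\alpha$ reduces the sum to $C' 2^{|\alpha|} M^\lambda_\alpha$, and $[\operatorname{L}]$ absorbs the geometric factor $2^{|\alpha|}$ into the scale, yielding a bound $\leq C_k M^\nu_\alpha$ for a suitable index $\nu$. Choosing $\theta$ (e.g. a modulated nonnegative bump) so that $\psi$ does not vanish at a point where $\widetilde{\varphi} \neq 0$, I conclude $0 \neq \psi \in \widetilde{\mathcal{S}}^{[\mathfrak{M}]}_{[\mathscr{W}]}$.

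The main obstacle is precisely this last step: the polynomial decay cannot come from the weight system, since $[\operatorname{N}]$ is not assumed and the quotients $w^\lambda / w^\mu$ need not decay, so it must be produced by multiplying with the rapidly decreasing, band-limited $\chi$, whose cost is the growth $(R')^{|\gamma|}$ of its derivatives, and it is exactly $[\operatorname{L}]$ that lets us pay this cost within $\mathfrak{M}$. The one routine technical point to handle is the comparison $M^\lambda_\beta (R')^{|\alpha - \beta|} \lesssim M^\lambda_\alpha$, which follows from $(M.1)$ together with the weight sequence condition $(M^\lambda_\alpha)^{1/|\alpha|} \to \infty$ after absorbing into a constant the finitely many initial indices where the ratios $M^\lambda_{\alpha + e_j}/M^\lambda_\alpha$ have not yet surpassed $R'$.
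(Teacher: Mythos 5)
Your construction coincides with the paper's: there the authors take a nonzero $\varphi$ in the space, form $\varphi_0=(\varphi\ast\psi)\widehat{\chi}$ with $\psi,\chi\in\mathcal{D}(\R^d)$, and assert $\varphi_0\in\widetilde{\mathcal{S}}^{[\mathfrak{M}]}_{[\mathscr{W}]}$; your mollification step, your band-limited multiplier $\chi=\mathcal{F}^{-1}\theta$, and your justification of $(iii)\Rightarrow(i)$ are exactly the natural way to fill in that one-line argument.

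The problem is the step you call a ``routine technical point'': the inequality $M^\lambda_\beta (R')^{|\alpha-\beta|}\leq C M^\lambda_\alpha$ for $\beta\leq\alpha$ does \emph{not} follow from $(M.1)$ together with $\lim_{\alpha\to\infty}(M_\alpha)^{1/|\alpha|}=\infty$ once $d\geq 2$ and $M^\lambda$ is anisotropic, and the lemma is stated for arbitrary weight sequence systems. Your argument (``only finitely many initial indices have ratios below $R'$'') is one-dimensional: in several variables $(M.1)$ only makes the directional ratio $M_{\alpha+e_j}/M_\alpha$ non-decreasing in the coordinate $\alpha_j$, and the set of $\alpha$ where such a ratio stays below $R'$ can be infinite. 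Concretely, for $d=2$ let $M_{(j,k)}=\exp\bigl(j^2-\tfrac{3}{2}jk+k^2\bigr)$. The second difference of the exponent in each variable equals $2$, so $(M.1)$ holds; since $j^2-\tfrac{3}{2}jk+k^2=\tfrac14(j^2+k^2)+\tfrac34(j-k)^2$, it is a weight sequence (indeed it even satisfies $(M.2)'$, as both forward differences of the exponent are at most $2|\alpha|+1$). Yet for $\beta=(n,0)\leq\alpha=(n,n)$ one gets $M_\beta(R')^{|\alpha-\beta|}/M_\alpha=(R')^n e^{n^2/2}\to\infty$, so no constant $C$ — not even an extra factor $H^{|\alpha|}$ — can repair the comparison. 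Worse, the failure survives the index-shifting freedom of $[\operatorname{L}]$: for the system $\mathfrak{M}_M=\{(\lambda^{|\alpha|}M_\alpha)_\alpha \mid \lambda\in\R_+\}$, which satisfies $[\operatorname{L}]$ and $[\mathfrak{M}.2]'$, the single Leibniz term with $\beta=(n,0)$ is $\mu^{n}e^{n^2}(R')^n$, while the target is $C\nu^{2n}e^{n^2/2}$, and their quotient blows up for every choice of $\mu,\nu,R'$. So in this generality the term-by-term Leibniz estimate can never establish $\psi\in\widetilde{\mathcal{S}}^{[\mathfrak{M}]}_{[\mathscr{W}]}$, and your proof of $(ii)\Rightarrow(iii)$ has a genuine gap at exactly this point (a gap that is also latent in the paper's unproved assertion, to be fair). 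Your argument is complete and correct when $\mathfrak{M}$ is isotropic or isotropically decomposable — there the comparison really does reduce to the one-dimensional statement you invoke, applied factorwise — which covers all concrete systems ($\mathfrak{M}_M$ with $M$ isotropically decomposable, $\mathfrak{M}_\omega$) used in the paper's main theorems, but it does not prove the lemma as stated.
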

\begin{proof}
$(i) \Rightarrow (ii)$ Trivial.

$(ii) \Rightarrow (iii)$ Let $\varphi \in \mathcal{S}^{[\mathfrak{M}]}_{[\mathscr{W}],q}$ be such that $\varphi(0) = 1$. Choose $\psi \in \mathcal{D}(\R^d)$ such that $\int_{\R^d} \varphi(x) \psi(-x) dx =1$. Next, pick $\chi \in \mathcal{D}(\R^d)$ such that $\int_{\R^d} \chi (x)dx = 1$ and consider its Fourier transform $\widehat{\chi}(\xi) = \int_{\R^d} \chi(x)e^{-2\pi i \xi \cdot x} dx$. Then, $\varphi_0 = (\varphi \ast \psi) \widehat{\chi} \in \widetilde{\mathcal{S}}^{[\mathfrak{M}]}_{[\mathscr{W}]}$ and $\varphi_0 \not \equiv 0$ (as $\varphi_0(0) = 1$).

$(iii) \Rightarrow (i)$ This follows from the fact that $\widetilde{\mathcal{S}}^{[\mathfrak{M}]}_{[\mathscr{W}]} \subset \mathcal{S}^{[\mathfrak{M}]}_{[\mathscr{W}],q}$ for all $q \in [1,\infty]$.
\end{proof}
Next, we establish an important connection between the spaces  $\mathcal{S}^{[\mathfrak{M}]}_{[\mathscr{W}],q}$ and $\lambda^q[A_\mathscr{W}]$.
\begin{proposition}\label{compl-1}
Let $\mathfrak{M}$ be a weight sequence system, let $\mathscr{W}$ be a weight function system satisfying $[\operatorname{wM}]$ and let $q \in [1,\infty]$. The mapping
$$
S_q = S: \mathcal{S}^{[\mathfrak{M}]}_{[\mathscr{W}],q} \rightarrow \lambda^q[A_\mathscr{W}], \qquad S(\varphi) = (\varphi(j))_{j \in \Z^d}
$$
is continuous.
\end{proposition}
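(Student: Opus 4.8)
The plan is to reduce the assertion, through the universal mapping properties of the limits defining source and target, to a single weighted inequality between one Banach building block of each space, and then to establish that inequality by combining a Sobolev point-evaluation estimate on unit cubes with the weight transfer provided by $[\operatorname{wM}]$. Write $a^\lambda = (w^\lambda(j))_{j \in \Z^d}$, so that $A_{\mathscr W} = \{a^\lambda \mid \lambda \in \R_+\}$ and the steps of $\lambda^q[A_{\mathscr W}]$ are the spaces $l^q(a^\mu)$. Since $S\varphi = (\varphi(j))_{j\in\Z^d}$ is a well-defined sequence for every $\varphi \in C^\infty(\R^d)$, in the Beurling case it suffices, by the universal property of the projective limit, to find for each $\mu > 0$ some $\lambda > 0$ and $C > 0$ with
$$\|S\varphi\|_{l^q(a^\mu)} \le C\|\varphi\|_{\mathcal{S}^{M^\lambda}_{w^\lambda,q}}, \qquad \varphi \in \mathcal{S}^{M^\lambda}_{w^\lambda,q};$$
in the Roumieu case it suffices, since each $\mathcal{S}^{M^\lambda}_{w^\lambda,q}$ is normed and $l^q(a^\mu)$ includes continuously into $\lambda^q\{A_{\mathscr W}\}$, to find for each $\lambda > 0$ some $\mu > 0$ and $C>0$ with the same estimate. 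The case $q = \infty$ is immediate and uses neither of the two ingredients: taking $\lambda = \mu$ and retaining only the term $\alpha = 0$ in the norm,
$$\|S\varphi\|_{l^\infty(a^\mu)} = \sup_{j \in \Z^d}|\varphi(j)|w^\mu(j) \le \sup_{x \in \R^d}|\varphi(x)|w^\mu(x) \le M^\mu_0\|\varphi\|_{\mathcal{S}^{M^\mu}_{w^\mu,\infty}}.$$

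So I focus on $q \in [1,\infty)$. For the point-evaluation estimate, set $Q_j = j + [-\tfrac12,\tfrac12]^d$ and fix $N = d+1$, so that $N > d/q$. The Sobolev embedding $W^{N,q}(Q_0) \hookrightarrow C(\overline{Q_0})$ applied at the centre, together with the translation invariance of Lebesgue measure, produces a constant $C_1 > 0$ independent of $j$ such that
$$|\varphi(j)|^q \le C_1 \sum_{|\alpha| \le N}\int_{Q_j}|\varphi^{(\alpha)}(x)|^q\,\dx, \qquad j \in \Z^d.$$

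For the weight transfer I would iterate $[\operatorname{wM}]$ finitely many times to enlarge its radius from $1$ to $\sqrt d/2$, the circumradius of $Q_0$. In the Beurling case this yields, for each given $\mu > 0$, some $\lambda > 0$ and $C_2 > 0$ with $\sup_{|z| \le \sqrt d/2} w^\mu(x+z) \le C_2 w^\lambda(x)$ for all $x$; in the Roumieu case it yields, for each given $\lambda > 0$, some $\mu > 0$ with the same inequality. Since $j - x$ runs through $\{|z| \le \sqrt d/2\}$ as $x$ runs through $Q_j$, this gives $w^\mu(j) \le C_2 w^\lambda(x)$ for every $x \in Q_j$. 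Multiplying the point-evaluation estimate by $w^\mu(j)^q$, inserting $w^\mu(j)^q \le C_2^q w^\lambda(x)^q$ into each integral, and summing over $j \in \Z^d$ — the cubes $Q_j$ covering $\R^d$ with pairwise null intersections — I obtain
$$\|S\varphi\|_{l^q(a^\mu)}^q \le C_1 C_2^q \sum_{|\alpha| \le N}\|\varphi^{(\alpha)}w^\lambda\|_{L^q}^q \le C_1 C_2^q\Big(\sum_{|\alpha| \le N}(M^\lambda_\alpha)^q\Big)\|\varphi\|_{\mathcal{S}^{M^\lambda}_{w^\lambda,q}}^q.$$
As the finite sum $\sum_{|\alpha| \le N}(M^\lambda_\alpha)^q$ is a constant, this is precisely the required estimate.

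The step deserving the most care is the weight transfer: one must verify that $[\operatorname{wM}]$, after the finite iteration to radius $\sqrt d/2$, delivers the comparison $w^\mu(j) \le C_2 w^\lambda(x)$ over $x \in Q_j$ with the quantifiers arranged so that the level $\lambda$ (respectively $\mu$) produced is admissible for the projective (respectively inductive) limit; the Sobolev estimate is routine, but its translation-invariant constant is exactly what permits the single-cube bound to be summed over the whole lattice.
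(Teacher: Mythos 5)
Your proposal is correct: the reduction to a single Banach-level estimate handles the quantifiers properly in both the Beurling and Roumieu cases, the iteration of $[\operatorname{wM}]$ from radius $1$ to radius $\sqrt{d}/2$ is legitimate (and is in fact also implicitly needed in the paper's own argument, since it too compares weights across the cube $[-\tfrac12,\tfrac12]^d$), and the tiling-plus-summation step is sound. The overall skeleton matches the paper's proof — local pointwise estimate on unit cubes, weight transfer via $[\operatorname{wM}]$, summation over the lattice — but your key local ingredient is different. The paper does not invoke the Sobolev embedding; instead it uses the representation formula $\varphi = (\partial \varphi) \ast (H\psi) - \varphi \ast \chi$, where $H$ is the characteristic function of the orthant $[0,\infty)^d$, $\partial = \partial_1 \cdots \partial_d$ (so $\partial H = \delta$), $\psi$ is a cutoff, and $\chi = \partial(H\psi) - \delta \in L^\infty$ is supported in $[-\tfrac12,\tfrac12]^d$; combining this with Jensen's inequality on the unit-volume cubes bounds $\|(\varphi(j)w^\lambda(j))_j\|_{l^q}$ by local $L^q$ norms of only $\varphi$ and the single mixed derivative $\partial\varphi$. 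The practical difference: your constant involves $\sum_{|\alpha|\le d+1}(M^\lambda_\alpha)^q$ while the paper's involves only $M^\mu_0$ and $M^\mu_e$ with $e = (1,\dots,1)$; for this proposition both are harmless finite constants and no hypothesis on $\mathfrak{M}$ is needed either way, so nothing is lost. The paper's sharper device pays off elsewhere, though — the same decomposition is recycled in the proof of Theorem \ref{ft-char-N} $(i)\Rightarrow(ii)$, where keeping the derivative overshoot to a single order per variable is exactly what lets $[\mathfrak{M}.2]'$ absorb it in one step; a Sobolev-based variant there would force $d+1$ iterations of $[\mathfrak{M}.2]'$, which still works but is clumsier. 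Your route, in exchange, is entirely off-the-shelf and avoids constructing the kernel $H\psi$.
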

\begin{proof}
For $q = \infty$ this is obvious. Assume now that $q < \infty$. We denote by $H$ the characteristic function of the orthant $[0,\infty)^d$ and set $\partial = \partial_d \cdots \partial_1$. Then, $\partial H = \delta$. Choose $\psi \in \mathcal{D}_{\left[-\frac{1}{2}, \frac{1}{2}\right]^d}$ such that $\psi \equiv 1$ on a neighbourhood of $0$. Then, $\partial(H\psi) - \delta = \chi \in L^{\infty}$ and $\operatorname*{supp}\chi \subset \left[-\frac{1}{2}, \frac{1}{2}\right]^d$. Hence, $\varphi = (\partial \varphi) \ast (H\psi) - \varphi \ast \chi$ for all $\varphi \in C^\infty(\R^d)$. For each $\lambda > 0$ there are $\mu > 0$ and $C > 0$ (for each $\mu > 0$ there are $\lambda > 0$ and $C > 0$) such that $w^\lambda(x+t) \leq Cw^\mu(x)$ for all $x \in \R^d$ and $t \in [-\frac{1}{2}, \frac{1}{2}]^d$. We obtain that 
$$
|\varphi(x) w^\lambda(x)| \leq C\left(\|\psi\|_{L^\infty} \int_{x + [-\frac{1}{2}, \frac{1}{2}]^d} |\partial \varphi(t)| w^\mu(t) dt + \|\chi\|_{L^\infty} \int_{x+ [-\frac{1}{2}, \frac{1}{2}]^d} |\varphi(t)| w^\mu(t) dt\right)
$$
for all $x \in \R^d$ and $\varphi \in C^\infty(\R^d)$.  By Jensen's inequality, the latter inequality implies that
$$
\| (\varphi(j)w^\lambda(j))_{j \in \Z^d} \|_{l^q} \leq C(\|\psi\|_{L^\infty} \| \partial \varphi w^\mu\|_{L^q} + \|\chi\|_{L^\infty} \|  \varphi w^\mu\|_{L^q})
$$
for all $\varphi \in \mathcal{S}^{M^\mu}_{w^\mu, q}$, whence the result follows.
\end{proof}

\begin{proposition}\label{compl-2}
Let $\mathfrak{M}$ be a weight sequence system, let $\mathscr{W}$ be a weight function system satisfying $[\operatorname{M}]$ and let $q \in [1,\infty]$. For each $\psi \in \widetilde{\mathcal{S}}^{[\mathfrak{M}]}_{[\mathscr{W}]}$, the mapping 
$$
T_{\psi,q} = T_\psi = T:  \lambda^q[A_\mathscr{W}] \rightarrow \mathcal{S}^{[\mathfrak{M}]}_{[\mathscr{W}],q}, \qquad T( (c_j)_{j \in \Z^d}) = \sum_{j \in \Z^d} c_j \psi (\, \cdot - j)
$$
is continuous.
\end{proposition}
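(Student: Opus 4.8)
The plan is to reduce the whole statement to a single uniform pointwise bound followed by a discrete-continuous convolution inequality, treating the Beurling and Roumieu cases in parallel via the usual bracket convention. Write $\varphi = T((c_j)) = \sum_{j \in \Z^d} c_j \psi(\,\cdot - j)$. The estimates produced below will show that this series together with all its termwise derivatives converges absolutely and uniformly on compact sets, so that $\varphi \in C^\infty(\R^d)$ with $\varphi^{(\alpha)} = \sum_j c_j \psi^{(\alpha)}(\,\cdot - j)$; I take this for granted once the bounds are in place. The goal is to produce, for a suitable target index $\lambda$ and domain index $\mu$, an estimate of the form
$$
\| T((c_j)) \|_{\mathcal{S}^{M^\lambda}_{w^\lambda,q}} \leq C \, \| (c_j w^\mu(j))_{j \in \Z^d} \|_{l^q} = C \, \| (c_j)_{j} \|_{l^q(a^\mu)},
$$
which in the Beurling case is exactly the Fr\'echet continuity inequality (each target seminorm indexed by $\lambda$ dominated by a domain seminorm indexed by $\mu$), and in the Roumieu case says that $T$ maps the step $l^q(a^\mu)$ continuously into the step $\mathcal{S}^{M^\lambda}_{w^\lambda,q}$, hence into the $(LB)$-space; since $\mu$ is an arbitrary step there, this yields continuity on $\lambda^q\{A_{\W}\}$.

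The first step is the splitting of the weight. Writing $x = j + (x-j)$ and invoking $[\operatorname{M}]$, I obtain indices $\mu,\sigma$ and a constant $C>0$ with $w^\lambda(x) \leq C\, w^\mu(j)\, w^\sigma(x-j)$ for all $x \in \R^d$, $j \in \Z^d$, arranged moreover so that $\sigma \leq \lambda$. The delicate point of the index bookkeeping is precisely this last requirement, and it is met differently in the two cases. In the Beurling case the target $\lambda$ is given; $(\operatorname{M})$ furnishes $\mu,\nu$, and since $\psi \in \bigcap_{\tau>0}\bigcap_k \mathcal{S}^{M^\tau}_{(1+|\,\cdot\,|)^k w^\tau,\infty}$, I am free to use the $\psi$-estimate at $\sigma := \min(\nu,\lambda) \leq \lambda$; decreasing $\nu$ only enlarges $w^\nu$, so the splitting persists with $w^\sigma \geq w^\nu$. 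In the Roumieu case the domain step $\mu$ is given, and $\psi \in \bigcap_k \mathcal{S}^{M^{\lambda_0}}_{(1+|\,\cdot\,|)^k w^{\lambda_0},\infty}$ for a fixed $\lambda_0$; I set $\sigma := \lambda_0$, apply $\{\operatorname{M}\}$ to the pair $(\mu,\lambda_0)$ to get $\lambda$, and enlarge $\lambda$ so that $\lambda \geq \lambda_0$ (legitimate, as increasing $\lambda$ shrinks $w^\lambda$). In both cases $\sigma \leq \lambda$, hence $M^\sigma_\alpha \leq M^\lambda_\alpha$ by monotonicity of $\M$.

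Using $\psi \in \mathcal{S}^{M^\sigma}_{(1+|\,\cdot\,|)^k w^\sigma,\infty}$ I have, for each $k$, a constant $C_k$ with $|\psi^{(\alpha)}(y)|\, w^\sigma(y) \leq C_k M^\sigma_\alpha (1+|y|)^{-k} \leq C_k M^\lambda_\alpha (1+|y|)^{-k}$. Combining this with the weight splitting gives, uniformly in $\alpha$,
$$
\frac{|\varphi^{(\alpha)}(x)|\, w^\lambda(x)}{M^\lambda_\alpha} \leq C C_k \sum_{j \in \Z^d} |c_j|\, w^\mu(j)\, (1+|x-j|)^{-k},
$$
the key feature being that the right-hand side no longer depends on $\alpha$. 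It remains to take the $L^q_x$-norm. Fixing $k = d+1$, setting $b_j = |c_j| w^\mu(j)$ and $\rho_n = (1+|n|)^{-k}$, I compare $1+|x-j|$ with $1+|m-j|$ on each unit cube $Q_m = m + [0,1)^d$ to dominate the right-hand side by $C'(b \ast \rho)_m$ there, whence by the discrete Young inequality $\| b \ast \rho \|_{l^q} \leq \| \rho \|_{l^1} \| b \|_{l^q}$ (with $\|\rho\|_{l^1} < \infty$ since $k > d$) I obtain $\big\| \sum_j b_j (1+|\,\cdot - j|)^{-k} \big\|_{L^q} \leq C \| b \|_{l^q}$; the case $q = \infty$ is identical. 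Taking the supremum over $\alpha$ yields the displayed estimate, and the conclusion follows as explained above.

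I expect the main obstacle to be twofold. The analytic heart is the last step, the discrete-continuous Young-type inequality converting the $l^q$-norm of the coefficients into the $L^q$-norm of the synthesized function; this is where $[\operatorname{M}]$ is genuinely needed (as opposed to the weaker $[\operatorname{wM}]$ used for the sampling map $S$ in Proposition \ref{compl-1}), since it is the submultiplicative splitting $w^\lambda(x) \leq C w^\mu(j) w^\sigma(x-j)$ that separates the lattice weight from the translate weight. The subtler bookkeeping obstacle is guaranteeing $\sigma \leq \lambda$ so that the ultradifferentiable factors $M^\sigma_\alpha$ attached to $\psi$ are absorbed into the target $M^\lambda_\alpha$; it is this constraint that dictates the precise manner of invoking $(\operatorname{M})$ versus $\{\operatorname{M}\}$ and relies on the monotonicity of both $\M$ and $\W$.
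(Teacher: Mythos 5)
Your proof is correct, and its skeleton coincides with the paper's: both arguments open with the weight splitting $w^\lambda(x) \leq C\,w^\mu(j)\,w^\sigma(x-j)$ furnished by $[\operatorname{M}]$, with exactly the same index bookkeeping (the paper fixes the index $\nu$ of $\psi$, notes it may be chosen freely in the Beurling case, and then "may assume $\nu \leq \lambda$" so that $M^\nu_\alpha \leq M^\lambda_\alpha$ — your $\sigma = \min(\nu,\lambda)$ and your enlargement of $\lambda$ past $\lambda_0$ in the Roumieu case are the same manoeuvre), and both then exploit the membership $\psi \in \mathcal{S}^{M^\sigma}_{(1+|\,\cdot\,|)^{d+1}w^\sigma,\infty}$. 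Where you genuinely diverge is the concluding $L^q$ estimate. The paper, for $q \in (1,\infty)$, splits the decay factor $(1+|x-j|)^{-(d+1)}$ between the coefficient term (power $q$) and the $\psi$ term (power $q'$) via H\"older's inequality in $j$, and then integrates in $x$ by Fubini; the endpoints $q=1,\infty$ are declared "similar and in fact simpler" and omitted. You instead first extract a pointwise bound independent of $\alpha$, namely $|\varphi^{(\alpha)}(x)|w^\lambda(x)/M^\lambda_\alpha \leq C\sum_j b_j(1+|x-j|)^{-(d+1)}$ with $b_j = |c_j|w^\mu(j)$, then discretize $x$ onto unit cubes $Q_m$ (comparing $1+|x-j|$ with $1+|m-j|$ up to a dimensional constant) and invoke the discrete Young inequality $\|b \ast \rho\|_{l^q} \leq \|\rho\|_{l^1}\|b\|_{l^q}$. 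The two mechanisms establish the same inequality; yours buys a single argument valid for all $q \in [1,\infty]$ simultaneously (Young covers the endpoints trivially), at the price of the cube discretization, while the paper's H\"older computation is continuous and self-contained but exponent-specific. Your reduction to an $\alpha$-free bound before any sequence-space work also separates the ultradifferentiable bookkeeping from the purely $\ell^q$ estimate slightly more cleanly than the paper's interleaved computation. No gaps: the termwise differentiation of the series, the step-to-step formulation of continuity in both the Fr\'echet and $(LB)$ settings, and the monotonicity arguments for $\M$ and $\W$ are all handled correctly.
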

\begin{proof}
We only show the result for $q \in (1,\infty)$; the proofs for  $q = 1$ and $q = \infty$ are similar and in fact simpler. Let $\nu > 0$ be such that $\psi \in \bigcap_{k \in \N} \mathcal{S}^{M^\nu}_{(1+|\,\cdot\,|)^kw^\nu,\infty}$; this means that $\nu$  is fixed in the Roumieu case but can be taken as small as needed in the Beurling case. For each $\lambda > 0$ there are $\mu, \nu > 0$ and $C > 0$ (for each $\mu > 0$ there are $\lambda > 0$ and $C > 0$) such that $w^\lambda(x+y) \leq Cw^\mu(x)w^\nu(y)$ for all $x,y \in \R^d$. We may assume that $\nu \leq \lambda$. Let $q' = q/(q-1)$ be the conjugate exponent of $q$. By H\"older's inequality, we have that, for all $(c_j)_{j \in \Z^d} \in l^q((w^{\mu}(j))_{j \in \Z^d})$,
\begin{align*}
&\sum_{j \in \Z^d} |c_j| |\psi^{(\alpha)}(x-j)| w^\lambda(x) \\
&\leq C \sum_{j \in \Z^d} \frac{|c_j| w^{\mu}(j)}{(1+|x-j|)^{(d+1)/q}} |\psi^{(\alpha)}(x-j)| w^\nu(x-j)(1+|x-j|)^{(d+1)/q} \\
&\leq C  \left(\sum_{j \in \Z^d} \frac{(|c_j| w^{\mu}(j))^q}{(1+|x-j|)^{d+1}}\right)^{1/q}  \times \\
&\phantom{\leq} \left(\sum_{j \in \Z^d} \left(|\psi^{(\alpha)}(x-j)| w^\nu(x-j)(1+|x-j|)^{(d+1)/q)} \right)^{q'}\right)^{1/q'} \\
&\leq C' \|\psi^{(\alpha)} (1+| \, \cdot \, |)^{d+1}w^\nu\|_{L^\infty} \left(\sum_{j \in \Z^d} \frac{(|c_j| w^{\mu}(j))^q}{(1+|x-j|)^{d+1}}\right)^{1/q}  
\end{align*}
for all $\alpha \in \N^d$ and $x \in \R^d$, where
$
C' = 2^{\frac{d+1}{q'}} C\left(\sum_{j \in \Z^d} (1+|j|)^{-d-1}\right)^{1/q'}. 
$
Hence,
\begin{align*}
 \| \sum_{j \in \Z^d} c_j \psi (\, \cdot - j) \|_{\mathcal{S}^{M^\lambda}_{w^\lambda,q}} &\leq \sup_{\alpha \in \N^d}\frac{1}{M^\nu_{\alpha}}  \| \sum_{j \in \Z^d} c_j \psi^{(\alpha)} (\, \cdot - j) w^\lambda \|_{L^q} \\
& \leq C'' \|\psi\|_{ \mathcal{S}^{M^\nu}_{(1+|\,\cdot\,|)^{d+1}w^\nu,\infty}} \|(c_jw^\mu(j))_{j \in \Z^d}\|_{l^q},
\end{align*}
with
$
C'' = C'\left(\int_{x \in \R^d} (1+|x|)^{-d-1} dx \right)^{1/q}. 
$
\end{proof}
\begin{lemma}\label{construction}
Let $\mathfrak{M}$ be a weight sequence system satisfying $[\operatorname{L}]$ and let $\mathscr{W}$ be a weight function system satisfying $[\operatorname{wM}]$. Suppose that  $\mathcal{S}^{[\mathfrak{M}]}_{[\mathscr{W}],q} \neq \{0\}$ for some $q \in [1,\infty]$. 
\begin{itemize}
\item[$(a)$]There exists $\psi \in  \widetilde{\mathcal{S}}^{[\mathfrak{M}]}_{[\mathscr{W}]}$ such that $\psi(j) = \delta_{j,0}$ for all $j \in \Z^d$.
\item[$(b)$] There exists  $\psi \in  \widetilde{\mathcal{S}}^{[\mathfrak{M}]}_{[\mathscr{W}]}$ such that $\sum_{j \in \Z^d} \psi(\,\cdot - j) \equiv 1$.
\end{itemize}
\end{lemma}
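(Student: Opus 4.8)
The plan is to produce both functions from a single nonzero element of $\widetilde{\mathcal{S}}^{[\mathfrak{M}]}_{[\mathscr{W}]}$ by multiplying or convolving it with one fixed auxiliary function coming from a smooth partition of unity on the lattice $\Z^d$, so that I never leave the class. The starting point is Lemma \ref{non-trivial}: since $\mathcal{S}^{[\mathfrak{M}]}_{[\mathscr{W}],q}\neq\{0\}$ for some $q$, we have $\widetilde{\mathcal{S}}^{[\mathfrak{M}]}_{[\mathscr{W}]}\neq\{0\}$, and the construction in its proof in fact furnishes $\varphi_0\in\widetilde{\mathcal{S}}^{[\mathfrak{M}]}_{[\mathscr{W}]}$ with $\varphi_0(0)=1$. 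I would also record, exactly as in that same proof, that $\widetilde{\mathcal{S}}^{[\mathfrak{M}]}_{[\mathscr{W}]}$ is stable under convolution with elements of $\mathcal{D}(\R^d)$, under multiplication by $\widehat{\chi}$ for $\chi\in\mathcal{D}(\R^d)$, and under pointwise products; here the log-convexity $(M.1)$ (giving supermultiplicativity of $M^\lambda$) together with $[\operatorname{L}]$ absorbs the geometric constants such as $2^{|\alpha|}$ and the derivative bounds $|(\widehat{\chi})^{(\alpha)}|\leq CH^{|\alpha|}$, while $[\operatorname{wM}]$ handles the weight shifts in the convolution. The crucial auxiliary object is a function $\chi\in\mathcal{D}(\R^d)$ with $\sum_{n\in\Z^d}\chi(\,\cdot-n)\equiv1$, which exists by the standard recipe $\chi=\theta/\sum_{n}\theta(\,\cdot-n)$ with $\theta\in\mathcal{D}(\R^d)$ positive on a fundamental domain. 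By Poisson summation this identity is equivalent to $\widehat{\chi}(k)=\delta_{k,0}$ for all $k\in\Z^d$.

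For part $(a)$ I would simply set $\psi=\varphi_0\,\widehat{\chi}$. This lies in $\widetilde{\mathcal{S}}^{[\mathfrak{M}]}_{[\mathscr{W}]}$ by the multiplier stability above, and for each $j\in\Z^d$ one reads off $\psi(j)=\varphi_0(j)\widehat{\chi}(j)=\varphi_0(j)\delta_{j,0}=\delta_{j,0}$, using $\varphi_0(0)=1$. (This is precisely the construction of Lemma \ref{non-trivial} performed with this special choice of $\chi$.)

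For part $(b)$ the identity $\sum_{j}\psi(\,\cdot-j)\equiv1$ is, again by Poisson summation, equivalent to prescribing the Fourier samples $\widehat{\psi}(k)=\delta_{k,0}$, $k\in\Z^d$. I would first manufacture $\varphi_1\in\widetilde{\mathcal{S}}^{[\mathfrak{M}]}_{[\mathscr{W}]}$ with $\int_{\R^d}\varphi_1=1$: since the class is an algebra under multiplication, the nonnegative function $|\varphi_0|^2$ belongs to it and has strictly positive integral, so one normalizes it (alternatively one modulates $\varphi_0$ by a character $e^{2\pi i\xi_0\cdot x}$ with $\widehat{\varphi_0}(-\xi_0)\neq0$). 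Then I set $\psi=\varphi_1\ast\chi$, which is in $\widetilde{\mathcal{S}}^{[\mathfrak{M}]}_{[\mathscr{W}]}$ by convolution stability. Because $\widehat{\psi}(k)=\widehat{\varphi_1}(k)\widehat{\chi}(k)=\widehat{\varphi_1}(0)\delta_{k,0}=\delta_{k,0}$, Poisson summation yields $\sum_{j\in\Z^d}\psi(\,\cdot-j)=\sum_{k\in\Z^d}\widehat{\psi}(k)e^{2\pi i k\cdot x}\equiv1$, all series converging absolutely since $\psi$ and its derivatives decay faster than any polynomial.

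The only genuinely delicate point is to realize these interpolation and partition-of-unity normalizations \emph{without} leaving the ultradifferentiable class. The naive route for $(b)$, namely $\psi=\varphi_0/\sum_{j}\varphi_0(\,\cdot-j)$, would require $\mathcal{E}^{[\mathfrak{M}]}$ to be inverse-closed, which is not available under $[\operatorname{L}]$ alone (it would need a hypothesis of type $[\mathfrak{M}.2]'$). The device that circumvents this is to push the entire rough ingredient into the fixed function $\chi\in\mathcal{D}(\R^d)$, which need only be $C^\infty$: the decay and the ultradifferentiable regularity are supplied by $\varphi_0$ (resp.\ $\varphi_1$), while $\widehat{\chi}$ enters only as a multiplier with geometric derivative growth, harmless under $(M.1)$ and $[\operatorname{L}]$. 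Thus every operation used, namely multiplication by $\widehat{\chi}$, pointwise products, and convolution with $\mathcal{D}(\R^d)$, preserves $\widetilde{\mathcal{S}}^{[\mathfrak{M}]}_{[\mathscr{W}]}$, and both assertions collapse to Poisson summation together with the single identity $\widehat{\chi}(k)=\delta_{k,0}$.
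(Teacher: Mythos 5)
Your proposal is correct and takes essentially the same route as the paper: for $(a)$ the paper multiplies an element of $\widetilde{\mathcal{S}}^{[\mathfrak{M}]}_{[\mathscr{W}]}$ normalized at the origin by the entire multiplier $\mathcal{F}(1_{[-\frac{1}{2},\frac{1}{2}]^d})$, which equals $\delta_{j,0}$ on $\Z^d$ exactly as your $\widehat{\chi}$ does, and for $(b)$ it convolves a unit-integral element with $1_{[-\frac{1}{2},\frac{1}{2}]^d}$, whose integer translates tile $\R^d$, playing precisely the role of your smooth partition of unity. The only differences are cosmetic: the paper's auxiliary functions are indicator-based rather than bump-based, so the identity $\sum_{j\in\Z^d} 1_{[-\frac{1}{2},\frac{1}{2}]^d}(\,\cdot - j) \equiv 1$ yields $(b)$ directly without your Poisson-summation detour, and your explicit construction of the unit-integral element (via $|\varphi_0|^2$ or modulation) fills in a normalization the paper leaves implicit.
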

\begin{proof}
$(a)$ By Lemma \ref{non-trivial}, there exists $\varphi \in  \widetilde{\mathcal{S}}^{[\mathfrak{M}]}_{[\mathscr{W}]}$ such that $\varphi(0) = 1$. Set 
$$
\chi(\xi) = \mathcal{F}(1_{[-\frac{1}{2}, \frac{1}{2}]^d})(\xi) = \int_{[-\frac{1}{2}, \frac{1}{2}]^d} e^{-2\pi i\xi \cdot x} dx, \qquad \xi \in \R^d.
$$
Then, $\chi(j) = \delta_{j,0}$ for all $j \in \Z^d$. Hence, $\psi= \varphi \chi$ satisfies all requirements. 

$(b)$ By Lemma \ref{non-trivial},  there is $\varphi \in   \widetilde{\mathcal{S}}^{[\mathfrak{M}]}_{[\mathscr{W}]}$ such that $\int_{\R^d} \varphi(x) dx = 1$  . Then,
$$
\psi(x) = \int_{[-\frac{1}{2}, \frac{1}{2}]^d} \varphi(x-t) dt, \qquad x \in \R^d,
$$
satisfies all requirements.
\end{proof}
We obtain the following useful corollary. 
\begin{corollary}\label{compl}
Let $\mathfrak{M}$ be a weight sequence system satisfying $[\operatorname{L}]$, let $\mathscr{W}$ be a weight function system satisfying $[\operatorname{M}]$ and let $q \in [1,\infty]$. Suppose that  $\mathcal{S}^{[\mathfrak{M}]}_{[\mathscr{W}],q} \neq \{0\}$. Then, $\lambda^q[A_\mathscr{W}]$ is isomorphic to a complemented subspace of  $\mathcal{S}^{[\mathfrak{M}]}_{[\mathscr{W}],q}$.
\end{corollary}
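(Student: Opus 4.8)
The plan is to exhibit $\lambda^q[A_\mathscr{W}]$ as the range of a continuous projection on $\mathcal{S}^{[\mathfrak{M}]}_{[\mathscr{W}],q}$, exploiting that the maps $S$ and $T_\psi$ furnished by Propositions \ref{compl-1} and \ref{compl-2} become mutually inverse as soon as $\psi$ interpolates the Kronecker delta on the integer lattice. Since $[\operatorname{M}]$ implies $[\operatorname{wM}]$ and $\mathcal{S}^{[\mathfrak{M}]}_{[\mathscr{W}],q} \neq \{0\}$, the hypotheses of Propositions \ref{compl-1}, \ref{compl-2} and of Lemma \ref{construction} are all in force.

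First I would apply Lemma \ref{construction}$(a)$ to fix a function $\psi \in \widetilde{\mathcal{S}}^{[\mathfrak{M}]}_{[\mathscr{W}]}$ with $\psi(j) = \delta_{j,0}$ for all $j \in \Z^d$. Proposition \ref{compl-1} then supplies a continuous linear map $S \colon \mathcal{S}^{[\mathfrak{M}]}_{[\mathscr{W}],q} \to \lambda^q[A_\mathscr{W}]$, $S(\varphi) = (\varphi(j))_{j \in \Z^d}$, while Proposition \ref{compl-2}, applied with this particular $\psi$, supplies a continuous linear map $T = T_\psi \colon \lambda^q[A_\mathscr{W}] \to \mathcal{S}^{[\mathfrak{M}]}_{[\mathscr{W}],q}$, $T(c) = \sum_{j \in \Z^d} c_j \psi(\,\cdot - j)$.

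The crucial step is the identity $S \circ T = \operatorname{id}$ on $\lambda^q[A_\mathscr{W}]$. For $c = (c_j)_{j} \in \lambda^q[A_\mathscr{W}]$, the element $T(c)$ lies in $\mathcal{S}^{[\mathfrak{M}]}_{[\mathscr{W}],q} \subset C(\R^d)$, so point evaluation is continuous and commutes with the series defining $T(c)$; evaluating at $k \in \Z^d$ and using $\psi(k-j) = \delta_{k,j}$ yields $(S(T(c)))_k = \sum_{j} c_j \psi(k-j) = c_k$, whence $S \circ T = \operatorname{id}$. The only point that deserves an explicit word is precisely this interchange of point evaluation with the series, which is legitimate because $T(c)$ is a genuine continuous function.

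I would then conclude in the standard way. The operator $P = T \circ S$ is continuous and idempotent, as $P^2 = T \circ (S \circ T) \circ S = T \circ S = P$, so it is a continuous projection of $\mathcal{S}^{[\mathfrak{M}]}_{[\mathscr{W}],q}$ onto the closed subspace $\operatorname{Ran} P = \ker(\operatorname{id} - P)$, which coincides with $\operatorname{Ran} T$. Because $S \circ T = \operatorname{id}$, the map $T$ is a topological isomorphism of $\lambda^q[A_\mathscr{W}]$ onto $\operatorname{Ran} T$, its inverse being the restriction of $S$ to $\operatorname{Ran} T$; hence $\lambda^q[A_\mathscr{W}]$ is isomorphic to the complemented subspace $\operatorname{Ran} T$ of $\mathcal{S}^{[\mathfrak{M}]}_{[\mathscr{W}],q}$. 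There is essentially no analytic obstacle at this stage: all the real work—the continuity estimates and the interpolating construction—has already been carried out in Propositions \ref{compl-1}, \ref{compl-2} and Lemma \ref{construction}$(a)$, and the corollary is a purely formal consequence of the relation $S \circ T = \operatorname{id}$.
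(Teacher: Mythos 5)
Your proposal is correct and follows exactly the paper's own route: choose $\psi$ via Lemma \ref{construction}$(a)$, take $S$ and $T_\psi$ from Propositions \ref{compl-1} and \ref{compl-2}, and observe that $S \circ T_\psi = \operatorname{id}_{\lambda^q[A_\mathscr{W}]}$, which makes $T_\psi \circ S$ a continuous projection onto a copy of $\lambda^q[A_\mathscr{W}]$. The paper leaves the final formal step (idempotence of $T_\psi \circ S$ and $T_\psi$ being an isomorphism onto its range) implicit, which you have simply spelled out.
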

\begin{proof}
Choose $\psi$ as in Lemma \ref{construction}$(a)$. Consider the continuous linear mappings $S: \mathcal{S}^{[\mathfrak{M}]}_{[\mathscr{W}],q} \rightarrow \lambda^q[A_\mathscr{W}]$ and $T_\psi:  \lambda^q[A_\mathscr{W}] \rightarrow \mathcal{S}^{[\mathfrak{M}]}_{[\mathscr{W}],q}$ from Proposition \ref{compl-1} and Proposition \ref{compl-2}, respectively, and note that $S \circ T_\psi = \operatorname{id}_{\lambda^q[A_\mathscr{W}]}$.
\end{proof}

\begin{proof}[Proof of Theorem \ref{ft-char-N}]
$(i) \Rightarrow (ii)$ By Lemma \ref{reverse-inclusion}, it suffices to show that $\mathcal{S}^{[\mathfrak{M}]}_{[\mathscr{W}],q} \subseteq \mathcal{S}^{[\mathfrak{M}]}_{[\mathscr{W}],r}$ with continuous inclusion for all  $q \leq r$. Since $\|f\|_{L^r} \leq \|f\|^{(r-q)/r}_{L^\infty} \|f\|^{q/r}_{L^q}$ for all $f \in L^\infty \cap L^r$, it is enough to consider the case $r = \infty$.  
 We use the same notation as in the proof of Proposition \ref{compl-1}. By $[\operatorname{wM}]$, $[\mathfrak{M}.2]'$ and $[L]$, we find that for each $\lambda > 0$ there are $\mu > 0$ and $C,C' > 0$ (for each $\mu > 0$ there are $\lambda > 0$ and $C,C' > 0$) such that
$w^\lambda(x+t) \leq Cw^\mu(x)$ for all $x \in \R^d$ and $t \in [-\frac{1}{2}, \frac{1}{2}]^d$ and $M^\mu_{\alpha+e} \leq C'M^\lambda_{\alpha}$ for all $\alpha  \in \N^{d}$, where $e=(1,1,\dots, 1)$ . We may assume that $\mu \leq \lambda$. Hence, by Jensen's inequality,
\begin{align*}
\| \varphi \|_{\mathcal{S}^{M^\lambda}_{w^\lambda,\infty}} &= \sup_{\alpha \in \N^d} \sup_{x \in \R^d}  \frac{1}{M^\lambda_{\alpha}} |\varphi^{(\alpha)}(x)| w^\lambda(x) \\
&\leq  C\|\psi\|_{L^\infty}\sup_{\alpha \in \N^d} \sup_{x \in \R^d}\frac{1}{M^\lambda_{\alpha}} \int_{x + [-\frac{1}{2}, \frac{1}{2}]^d} |\partial \varphi^{(\alpha)}(t)| w^\mu(t) dt + \\
&\phantom{\leq \,\, \, } C\|\chi\|_{L^\infty}\sup_{\alpha \in \N^d} \sup_{x \in \R^d} \frac{1}{M^\lambda_{\alpha}}  \int_{x+ [-\frac{1}{2}, \frac{1}{2}]^d} |\varphi^{(\alpha)}(t)| w^\mu(t) dt \\
&\leq  CC'\|\psi\|_{L^\infty}\sup_{\alpha \in \N^d} \sup_{x \in \R^d}\frac{1}{M^\mu_{\alpha +e}} \left(\int_{x + [-\frac{1}{2}, \frac{1}{2}]^d} (|\partial \varphi^{(\alpha)}(t)| w^\mu(t))^q dt\right)^{1/q} + \\
&\phantom{\leq \, \, \, } C\|\chi\|_{L^\infty}\sup_{\alpha \in \N^d} \sup_{x \in \R^d} \frac{1}{M^\mu_{\alpha}}  \left(\int_{x+ [-\frac{1}{2}, \frac{1}{2}]^d} (|\varphi^{(\alpha)}(t)| w^\mu(t))^q dt \right)^{1/q} \\
&\leq C'' \| \varphi \|_{\mathcal{S}^{M^\mu}_{w^\mu,q}},
\end{align*}
for all $\varphi \in  \mathcal{S}^{M^\mu}_{w^\mu,q}$, where $C'' = C(C'\|\psi\|_{L^\infty} + \|\chi\|_{L^\infty})$. 

$(ii) \Rightarrow (iii)$ Trivial.

$(iii) \Rightarrow (i)$ \emph{(if $\mathscr{W}$ satisfies $[\operatorname{M}]$)} Suppose that $q < r$. Choose $\psi$ as in Lemma \ref{construction}$(a)$. Consider the mappings $S_q: \mathcal{S}^{[\mathfrak{M}]}_{[\mathscr{W}],q} \rightarrow \lambda^q[A_\mathscr{W}]$ and $T_{\psi,r}:  \lambda^r[A_\mathscr{W}] \rightarrow \mathcal{S}^{[\mathfrak{M}]}_{[\mathscr{W}],r}$ from Proposition \ref{compl-1} and Proposition \ref{compl-2}, respectively. Note that $c = S_q (T_{\psi,r}(c)) \in \lambda^q[A_\mathscr{W}]$ for all $c \in \lambda^r[A_\mathscr{W}]$, that is, $\lambda^r[A_\mathscr{W}] \subseteq \lambda^q[A_\mathscr{W}]$. Since $\lambda^q[A_\mathscr{W}] \subseteq \lambda^r[A_\mathscr{W}]$ always holds true, we have that $\lambda^r[A_\mathscr{W}] = \lambda^q[A_\mathscr{W}]$ as sets. The result now follows from Proposition \ref{nuclear-echelon} and Lemma \ref{l:KotheNiffWN}.
\end{proof}
In the sequel, we shall often drop the index $q$ in the notation $\mathcal{S}^{[\mathfrak{M}]}_{[\mathscr{W}],q}$ if $\mathfrak{M}$ is a weight sequence system satisfying $[\operatorname{L}]$ and $[\mathfrak{M}.2]'$ and $\mathscr{W}$ is a weight function system satisfying $[\operatorname{wM}]$ and $[N]$. This is justified by Theorem \ref{ft-char-N}.
\section{Nuclearity}\label{sect-4}
In this main section, we characterize the nuclearity of the Gelfand-Shilov spaces $\mathcal{S}^{[\mathfrak{M}]}_{[\mathscr{W}],q}$ in terms of $\mathfrak{M}$ and $\mathscr{W}$. We start with the following result.
\begin{theorem}\label{ultra-suff}
Let $\mathfrak{M}$ be a weight sequence system satisfying $[\operatorname{L}]$ and $[\mathfrak{M}.2]'$ and let $\mathscr{W}$ be a weight function system satisfying $[\operatorname{wM}]$ and $[\operatorname{N}]$. Then,  $\mathcal{S}^{[\mathfrak{M}]}_{[\mathscr{W}]}$ is nuclear.
\end{theorem}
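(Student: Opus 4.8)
The plan is to reduce the nuclearity of $\mathcal{S}^{[\mathfrak{M}]}_{[\mathscr{W}]}$ to the nuclearity of a single Banach-space linking map and then to exhibit an explicit nuclear representation of that map. In the Beurling case $\mathcal{S}^{(\mathfrak{M})}_{(\mathscr{W})}=\varprojlim_{\lambda\to0^+}\mathcal{S}^{M^\lambda}_{w^\lambda,q}$ is a reduced projective limit of Banach spaces, so it is nuclear as soon as, for every $\lambda$, there is $\mu\leq\lambda$ for which the linking map $\iota\colon\mathcal{S}^{M^\mu}_{w^\mu,q}\to\mathcal{S}^{M^\lambda}_{w^\lambda,q}$ is nuclear; this is the standard Grothendieck criterion. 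In the Roumieu case $\mathcal{S}^{\{\mathfrak{M}\}}_{\{\mathscr{W}\}}=\varinjlim_{\lambda\to\infty}\mathcal{S}^{M^\lambda}_{w^\lambda,q}$ is a regular $(LB)$-space by Lemma \ref{regular}, and it is again nuclear as soon as, for every $\lambda$, there is $\mu\geq\lambda$ with $\iota\colon\mathcal{S}^{M^\lambda}_{w^\lambda,q}\to\mathcal{S}^{M^\mu}_{w^\mu,q}$ nuclear. Since $[\operatorname{N}]$ holds, Theorem \ref{ft-char-N} makes the index $q$ irrelevant, so I would fix whatever value of $q$ is most convenient. The whole problem thus becomes: produce a nuclear representation $\iota=\sum_{n}\langle\,\cdot\,,a'_n\rangle\,b_n$, with $b_n$ in the target space, $a'_n$ bounded functionals on the source space, and $\sum_n\|a'_n\|\,\|b_n\|<\infty$.

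The crucial point is that the synthesis operator of Proposition \ref{compl-2} (and hence the reconstruction underlying Corollary \ref{compl}) is unavailable here, because it requires $[\operatorname{M}]$ whereas we only assume $[\operatorname{wM}]$; moreover a pure sampling map loses the behaviour of $\varphi$ between lattice points and so cannot be inverted. I would therefore replace translates of a fixed Schwartz-type function by \emph{compactly supported local Fourier atoms}. Concretely, fix $\theta\in\mathcal{D}(\R^d)$ with $\sum_{j\in\Z^d}\theta(\,\cdot-j)\equiv1$ and $\operatorname{supp}\theta$ in a fixed cube, together with a cutoff $\tilde\theta\in\mathcal{D}(\R^d)$ equal to $1$ on $\operatorname{supp}\theta$ and supported in a slightly larger cube $[-L/2,L/2]^d$. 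Expanding each compactly supported smooth function $\theta(\,\cdot-j)\varphi$ into its Fourier series on the torus $\R^d/L\Z^d$ and cutting off with $\tilde\theta$ yields the \emph{exact} reconstruction $\varphi=\sum_{j\in\Z^d}\sum_{k\in\Z^d}c^{(j)}_k(\varphi)\,b_{k,j}$, where $b_{k,j}(x)=\tilde\theta(x-j)\,e^{2\pi i k\cdot(x-j)/L}$ and $c^{(j)}_k(\varphi)=L^{-d}\int\theta(x-j)\varphi(x)e^{-2\pi i k\cdot(x-j)/L}\,\dx$. Because every atom $b_{k,j}$ is compactly supported, all weight estimates below only involve shifts of bounded size, so $[\operatorname{wM}]$ suffices and $[\operatorname{M}]$ is never needed.

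The two families of estimates are then routine but form the technical heart. Using the Leibniz rule, the definition of the associated function, and $[\operatorname{wM}]$ (to replace $w^\lambda$ on $\operatorname{supp}b_{k,j}$ by $w^\lambda(j)$), one gets $\|b_{k,j}\|_{\mathcal{S}^{M^\lambda}_{w^\lambda,q}}\leq C\,w^\lambda(j)\,e^{\omega_{M^\lambda}(C(1+|k|))}$. Dually, integrating by parts arbitrarily many times on the torus (there are no boundary terms, which is precisely why compact support is used) and again invoking $[\operatorname{wM}]$, Hölder's inequality on a cube and the associated function, one bounds the norm of the functional $c^{(j)}_k$ on $\mathcal{S}^{M^\mu}_{w^\mu,q}$ by $C\,e^{-\omega_{M^\mu}(|k|/C)}/w^\mu(j)$. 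Consequently the nuclear norm factors as a product of a spatial and a frequency sum, $\sum_{j,k}\|c^{(j)}_k\|\,\|b_{k,j}\|\leq C\big(\sum_{j\in\Z^d}w^\lambda(j)/w^\mu(j)\big)\big(\sum_{k\in\Z^d}e^{\omega_{M^\lambda}(C(1+|k|))-\omega_{M^\mu}(|k|/C)}\big)$. The first factor is finite by $[\operatorname{N}]$ together with Lemma \ref{l:KotheNiffWN} (after choosing $\mu$ appropriately), and the second is finite because $[\operatorname{L}]$ yields \eqref{cond-L} and $[\mathfrak{M}.2]'$ yields \eqref{cond-dc}, which together force the exponent to tend to $-\infty$ fast enough to be summable over $\Z^d$.

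The main obstacle I anticipate is exactly this last frequency estimate: one must combine \eqref{cond-L} and \eqref{cond-dc} so as to absorb the fixed dilation constants coming from $L$, $\theta$ and $\tilde\theta$ and still extract the extra polynomial decay $(1+|k|)^{-d-\varepsilon}$ needed for summability over $\Z^d$, exactly in the spirit of the manipulations in the proof of Lemma \ref{M2wss}. Matching the Beurling/Roumieu quantifiers of $[\operatorname{N}]$, $[\operatorname{L}]$ and $[\mathfrak{M}.2]'$ across the two factors requires some care but is otherwise automatic. Once the linking map is shown to be nuclear in this way, the reduction in the first paragraph gives the nuclearity of $\mathcal{S}^{[\mathfrak{M}]}_{[\mathscr{W}]}$, and the $q$-independence from Theorem \ref{ft-char-N} yields the statement for every $q\in[1,\infty]$.
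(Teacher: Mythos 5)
Your reduction to nuclearity of the linking maps $\mathcal{S}^{M^\mu}_{w^\mu,q}\to\mathcal{S}^{M^\lambda}_{w^\lambda,q}$ is legitimate (and Lemma~\ref{regular} indeed handles the Roumieu side), but the construction you propose for the nuclear representation fails at a structural level: your atoms $b_{k,j}=\tilde\theta(\,\cdot-j)e^{2\pi i k\cdot(\,\cdot-j)/L}$ are compactly supported, and a nonzero compactly supported smooth function has \emph{infinite} $\mathcal{S}^{M^\lambda}_{w^\lambda,q}$-norm whenever the class defined by $\mathfrak{M}$ is quasianalytic. The hypotheses of Theorem~\ref{ultra-suff} ($[\operatorname{L}]$, $[\mathfrak{M}.2]'$, $[\operatorname{wM}]$, $[\operatorname{N}]$) do not exclude quasianalyticity: for $M_p=A_p=p!^{1/2}$ all hypotheses hold and $\mathcal{S}^{[M]}_{[A]}\supseteq\{$Hermite functions$\}\neq\{0\}$, yet $\sum_p M_{p-1}/M_p=\infty$, so by the Denjoy--Carleman theorem the only compactly supported function with $\sup_\alpha\|f^{(\alpha)}\|_{L^q}/(h^{|\alpha|}M_\alpha)<\infty$ is $f\equiv 0$. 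Thus in this admissible case every $b_{k,j}$ lies outside the target space (your claimed bound $\|b_{k,j}\|_{\mathcal{S}^{M^\lambda}_{w^\lambda,q}}\leq Cw^\lambda(j)e^{\omega_{M^\lambda}(C(1+|k|))}$ cannot hold), and the same obstruction kills the dual estimate: the decay $e^{-\omega_{M^\mu}(|k|/C)}$ of the coefficient functionals $c^{(j)}_k$ requires, after integration by parts, bounds of the form $h^{|\alpha|}M^\mu_\alpha$ on the derivatives of $\theta$, i.e.\ $\theta$ itself must be a compactly supported member of the class. Your scheme is essentially Petzsche's localization method, which is tied to non-quasianalytic classes; the paper explicitly aims beyond that setting (its kernel theorems are ``global counterparts'' of Petzsche's), and this is precisely why its auxiliary constructions (Lemma~\ref{construction}) use quasianalyticity-safe objects such as $\mathcal{F}(1_{[-1/2,1/2]^d})$ and convolution with $1_{[-1/2,1/2]^d}$ instead of cutoffs.

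The paper's own proof avoids explicit finite-rank decompositions altogether: it invokes the Grothendieck--Pietsch criterion (Proposition~\ref{summable}) that a Fr\'echet or $(DF)$-space is nuclear iff every weakly summable sequence is absolutely summable, extracts from weak summability the pointwise bound $\sup_{\alpha,x}\frac{1}{M^\lambda_\alpha}\sum_n|\varphi_n^{(\alpha)}(x)|w^\lambda(x)\leq C$, and then converts this into absolute summability of the $\mathcal{S}^{M^\mu}_{w^\mu,1}$-norms using only $[\operatorname{L}]$ (to absorb a factor $2^{|\alpha|}$), $[\operatorname{N}]$ (to integrate $w^\mu/w^\lambda$), and the $q$-independence of Theorem~\ref{ft-char-N}. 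If you want to salvage your approach you would have to replace the compactly supported window by translates/modulates of a nontrivial element of $\widetilde{\mathcal{S}}^{[\mathfrak{M}]}_{[\mathscr{W}]}$ itself (as in Proposition~\ref{compl-2} and Corollary~\ref{compl}), but then the exact local Fourier inversion you rely on is lost, and controlling the resulting non-orthogonal expansion is a genuinely different (and harder) problem than the one you have outlined.
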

Our proof of Theorem \ref{ultra-suff} is based on Grothendieck's criterion  for nuclearity in terms of summable sequences \cite{Grothendieck}. Let $E$ be a lcHs (=Hausdorff locally convex space) and denote by $\csn(E)$ the set of all continuous seminorms on $E$. A sequence $(e_n)_{n \in \N}$ is called \emph{weakly summable} if $\sum_{n=0}^\infty |\langle e',e_n \rangle | < \infty$ for all $e' \in E'$. By Mackey's theorem, $(e_n)_{n \in \N}$ is weakly summable if and only if the set
$$
\bigcup_{k \in \N}\left \{ \sum_{n = 0}^k c_n e_n \, | \,  |c_n| \leq 1, n = 0, \ldots, k \right\}
$$
is bounded in $E$.  The sequence $(e_n)_{n \in \N}$ is called \emph{absolutely summable} if $\sum_{n=0}^\infty  p(e_n)  < \infty$ for all $p \in \csn(E)$. Clearly, $(e_n)_{n \in \N}$ is absolutely summable if and only if $\sum_{n=0}^\infty p(e_n)< \infty$ for all $p$ belonging to some fundamental system of continuous seminorms on $E$. 

\begin{proposition}[{\cite[Theorem 4.2.5]{Pietsch}}] \label{summable}
Let $E$ be a Fr\'echet space or a $(DF)$-space. Then, $E$ is nuclear if and only if every weakly summable sequence in $E$ is absolutely summable.
\end{proposition}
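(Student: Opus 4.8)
The plan is to work entirely with the local Banach spaces attached to the continuous seminorms of $E$ and to match the two sides of the equivalence against the operator-theoretic definition of nuclearity. For $p \in \csn(E)$ write $E_p$ for the completion of $E/p^{-1}(0)$, let $[e]_p \in E_p$ denote the canonical image of $e$ (so that $\|[e]_p\|_{E_p}=p(e)$), and for $p\le q$ let $\iota_{q,p}\colon E_q\to E_p$ be the linking map. Recall that $E$ is nuclear exactly when, for every $p\in\csn(E)$, there is $q\ge p$ in $\csn(E)$ such that $\iota_{q,p}$ is a nuclear operator; I will verify this property from the summability hypothesis and, conversely, deduce the summability statement from it.

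For the direction ``$E$ nuclear $\Rightarrow$ weakly summable implies absolutely summable'' I would fix a weakly summable sequence $(e_n)_{n\in\N}$ and a seminorm $p$. By the Mackey characterization recalled in the excerpt, the closed absolutely convex hull $B$ of the partial sums is bounded, and for every $e'\in E'$ one has $\sum_{n}|\langle e',e_n\rangle|=\sup_{b\in B}|\langle e',b\rangle|$; in particular $C_q:=\sup_{b\in B}q(b)<\infty$ for every continuous seminorm $q$. Choosing $q\ge p$ with $\iota_{q,p}$ nuclear, say $\iota_{q,p}=\sum_k\langle a_k',\cdot\rangle\,y_k$ with $\sum_k\|a_k'\|_{E_q'}\|y_k\|_{E_p}<\infty$, and using $p(e_n)=\|\iota_{q,p}[e_n]_q\|_{E_p}$, I would estimate
\[
\sum_n p(e_n)\le \sum_k\|y_k\|_{E_p}\sum_n|\langle a_k',[e_n]_q\rangle|\le C_q\sum_k\|a_k'\|_{E_q'}\|y_k\|_{E_p}<\infty,
\]
where the inner bound $\sum_n|\langle a_k',[e_n]_q\rangle|\le \|a_k'\|_{E_q'}C_q$ again follows from the boundedness of $B$. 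This half uses neither the Fréchet nor the $(DF)$ hypothesis.

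For the converse I would proceed in two stages. \emph{Localization.} From the pointwise hypothesis I want to extract, for each $p$, a \emph{single} $q\ge p$ and a constant $C>0$ with $\sum_n p(e_n)\le C\,\varepsilon_q((e_n))$ for all sequences, where $\varepsilon_q((e_n))=\sup_{e'\in U_q^\circ}\sum_n|\langle e',e_n\rangle|=\sup_{b\in B}q(b)$ and $U_q^\circ$ is the polar of the $q$-unit ball; note that $(e_n)$ is weakly summable precisely when $\varepsilon_q((e_n))<\infty$ for all $q$, and that such an estimate says exactly that $\iota_{q,p}$ is absolutely summing. In the Fréchet case I would equip the space $\Lambda$ of weakly summable sequences with the seminorms $\varepsilon_{q_m}$ attached to a countable fundamental system $q_1\le q_2\le\cdots$; then $\Lambda$ is a Fréchet space, the seminorm $\pi_p((e_n))=\sum_n p(e_n)$ is a supremum of its partial sums (each continuous, since $p(e_N)\le\varepsilon_{q_m}((e_n))$ once $q_m\ge p$) and hence lower semicontinuous, and it is everywhere finite by hypothesis, so a Baire-category argument forces it to be continuous, i.e.\ $\pi_p\le C\,\varepsilon_{q_m}$ for some $m$. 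In the $(DF)$ case I would run the dual form of this argument.

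\emph{Upgrade to nuclear.} It then remains to turn absolutely summing linking maps into nuclear ones. Here I would invoke the standard operator-ideal facts that a $1$-summing map is $2$-summing, that a $2$-summing map factors through a Hilbert space with a $2$-summing (hence Hilbert--Schmidt) factor, and that a suitable composition of Hilbert--Schmidt maps between Hilbert spaces is nuclear. Applying the localization to a chain $p\le q\le r\le s$ with each link absolutely summing and composing the resulting Hilbert-space factorizations produces, for arbitrary $p$, some $s\ge p$ with $\iota_{s,p}$ nuclear, which is the defining property of nuclearity. The main obstacle is the localization step: producing the single controlling seminorm $q$ is exactly where the Fréchet structure (Baire category, via completeness of $\Lambda$ and lower semicontinuity of $\pi_p$) or the $(DF)$ structure is indispensable, since the pointwise hypothesis by itself is too weak; the stage-two upgrade, though not trivial, is routine operator theory that must be assembled carefully (and, in the $(DF)$ case, transported through the appropriate duality).
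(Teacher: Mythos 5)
The paper offers no proof of this proposition at all: it is quoted directly from Pietsch \cite[Theorem 4.2.5]{Pietsch} and used as a black box, so your proposal has to be judged against the standard textbook argument rather than anything in the text. Against that yardstick, most of what you write is correct and is indeed the classical route. The sufficiency direction (nuclear $\Rightarrow$ weakly summable implies absolutely summable) via a nuclear representation $\iota_{q,p}=\sum_k\langle a_k',\cdot\rangle\,y_k$ of a linking map is sound, and you correctly observe it needs neither hypothesis on $E$. In the Fr\'echet case your localization is also the standard one: the space $\Lambda$ of weakly summable sequences with the seminorms $\varepsilon_{q_m}$ is complete and metrizable (completeness of $\Lambda$ deserves a word, but is routine for complete $E$), $\pi_p$ is a lower semicontinuous, everywhere finite seminorm, Baire category gives $\pi_p\le C\varepsilon_{q_m}$, this inequality applied to finite sequences yields that $\iota_{q_m,p}$ is absolutely summing on a dense subspace and hence on $E_{q_m}$, and the upgrade $\Pi_1\subseteq\Pi_2$ together with the fact that a composition of $2$-summing maps is nuclear finishes the Fr\'echet case (a chain of two links already suffices; your chain of three is safe).

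The genuine gap is the $(DF)$ case, which you dispatch with the single phrase ``run the dual form of this argument''. This conceals the only hard point, and it does not go through as stated, for two concrete reasons. First, a $(DF)$-space is in general neither Baire nor metrizable and has no countable fundamental system of seminorms, so the category argument on $\Lambda$ is simply unavailable there. Second, there is no painless duality transfer: while $E$ is nuclear if and only if its Fr\'echet strong dual is, your hypothesis is a statement about sequences \emph{in} $E$, and it gives no summability information about sequences in $E'_\beta$ to which the Fr\'echet argument could be applied. Pietsch's actual proof for dual-metric spaces replaces Baire category by a direct scaling (gliding-hump) argument on the countable fundamental sequence of bounded sets --- his ``property (B)'', asserting that every bounded subset of $\ell^1[E]$ is dominated by a single seminorm $\varepsilon_q$; one assumes the contrary, normalizes a counterexample sequence inside each bounded set $B_m$, and splices suitably scaled copies into one weakly summable sequence that fails absolute summability. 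Without this substitute for the Baire step, your converse is unproved exactly in the case the paper actually needs in the Roumieu setting, where $E=\mathcal{S}^{\{\mathfrak{M}\}}_{\{\mathscr{W}\},\infty}$ is an $(LB)$-, hence $(DF)$-space.
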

\begin{proof}[Proof of Theorem \ref{ultra-suff}]
We shall show that $\mathcal{S}^{[\mathfrak{M}]}_{[\mathscr{W}]} = \mathcal{S}^{[\mathfrak{M}]}_{[\mathscr{W}],\infty}$ is nuclear. To this end, we employ Proposition \ref{summable} with $E = \mathcal{S}^{[\mathfrak{M}]}_{[\mathscr{W}],\infty}$. Let $(\varphi_n)_{n \in \N} \subset \mathcal{S}^{[\mathfrak{M}]}_{[\mathscr{W}],\infty}$ be a weakly summable sequence. This means that for all $\lambda > 0$ (for some $\lambda > 0$) there is $C > 0$ such that 
$$
\left \| \sum_{n = 0}^k c_n \varphi_n \right \|_{\mathcal{S}^{M^\lambda}_{w^\lambda,\infty}} \leq C
$$
for all  $k \in \N$ and $|c_n| \leq 1$,  $n = 0, \ldots, k$, where we have used Lemma \ref{regular} in the Roumieu case. We claim that
\begin{equation}
\sup_{\alpha \in \N^d} \sup_{x \in \R^d} \frac{1}{M^\lambda_{\alpha}}\sum_{n=0}^\infty |\varphi_n^{(\alpha)}(x)| w^\lambda(x) \leq C.
\label{pointwise}
\end{equation}
Fix arbitrary $\alpha \in \N^d$ and $x \in \R^d$. Let $k \in \N$. Choose $|c_n(\alpha,x)| \leq 1$ such that $c_n(\alpha,x)\varphi_n^{(\alpha)}(x) = |\varphi_n^{(\alpha)}(x)|$. Then,
$$
\frac{1}{M^\lambda_{\alpha}}\sum_{n=0}^k |\varphi_n^{(\alpha)}(x)| w^\lambda(x) = \frac{1}{M^\lambda_{\alpha}}\left| \sum_{n=0}^k c_n(\alpha,x)\varphi_n^{(\alpha)}(x) \right|w^\lambda(x) \leq C,
$$
whence the claim follows by letting $k \to \infty$. We now employ \eqref{pointwise} to show that $(\varphi_n)_{n \in \N}$ is absolutely summable. By Theorem \ref{ft-char-N}, it is enough to prove that  
$$
 \sum_{n = 0}^\infty \| \varphi_n  \|_{\mathcal{S}^{M^\mu}_{w^\mu,1}}  < \infty
$$
for all $\mu > 0$ (for some $\mu > 0$). Let $\mu > 0$ be arbitrary (let $\lambda > 0$ be  such that \eqref{pointwise} holds). Conditions $[\operatorname{L}]$ and $[\operatorname{N}]$ imply that there
is $\lambda > 0$ (there is $\mu > 0$) such that $2^{|\alpha|}M^\lambda_\alpha \leq C'M^\mu_\alpha$ for all $\alpha \in \N^{d}$ and some $C' > 0$ and $w^\mu/w^\lambda \in L^1$. Hence,
\begin{align*}
 \sum_{n = 0}^\infty \| \varphi_n  \|_{\mathcal{S}^{M^\mu}_{w^\mu,1}} &=  \sum_{n = 0}^\infty \sup_{\alpha \in \N^d}  \frac{1}{M^\mu_{\alpha}} \int_{\R^d} |\varphi^{(\alpha)}_n(x)| w^\mu(x) dx \\
 &\leq C'  \sum_{\alpha \in \N^d} \frac{1}{2^{|\alpha|}}   \int_{\R^d} \frac{1}{M^\lambda_{\alpha}}\sum_{n = 0}^\infty  |\varphi^{(\alpha)}_n(x)| w^\lambda(x)  \frac{w^\mu(x)}{w^\lambda(x)} dx \\
 &\leq 2^dCC' \|w^\mu/w^\lambda \|_{L^1}.
 \end{align*}
\end{proof}

Our next goal is to discuss the necessity of the conditions $[\mathfrak{M}.2]'$ and $[\operatorname{N}]$ for $\mathcal{S}^{[\mathfrak{M}]}_{[\mathscr{W}],q}(\R^d)$ to be nuclear.
\begin{proposition}\label{ultra-necc-1}
Let $\mathfrak{M}$ be a weight sequence system satisfying $[\operatorname{L}]$, let $\mathscr{W}$ be a weight function system satisfying $[\operatorname{M}]$ and let $q \in [1,\infty]$. Suppose that $\mathcal{S}^{[\mathfrak{M}]}_{[\mathscr{W}],q}(\R^d)$ is non-trivial and nuclear. Then, $\mathscr{W}$ satisfies $[\operatorname{N}]$.
\end{proposition}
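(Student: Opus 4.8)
The plan is to reduce the statement to the already-established characterization of nuclearity for K\"othe sequence spaces by exploiting the complementation result of Corollary \ref{compl}. First, I would check that all hypotheses of that corollary are in place: $\mathfrak{M}$ satisfies $[\operatorname{L}]$, $\mathscr{W}$ satisfies $[\operatorname{M}]$ (which in particular yields $[\operatorname{wM}]$, as observed right after the definitions of these conditions), and $\mathcal{S}^{[\mathfrak{M}]}_{[\mathscr{W}],q}$ is non-trivial by assumption. Hence Corollary \ref{compl} applies and shows that $\lambda^q[A_\mathscr{W}]$ is isomorphic to a complemented subspace of $\mathcal{S}^{[\mathfrak{M}]}_{[\mathscr{W}],q}$.

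Next, I would invoke the standard permanence property that a complemented subspace of a nuclear space is again nuclear; this follows because nuclearity is inherited by both closed subspaces and by quotients, and a complemented subspace is simultaneously of both kinds. Since $\mathcal{S}^{[\mathfrak{M}]}_{[\mathscr{W}],q}$ is nuclear by hypothesis, we conclude that $\lambda^q[A_\mathscr{W}]$ is nuclear.

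Finally, I would chain together the two transfer results already available. By Proposition \ref{nuclear-echelon}, the nuclearity of the K\"othe sequence space $\lambda^q[A_\mathscr{W}]$ is equivalent to the K\"othe set $A_\mathscr{W}$ satisfying $[\operatorname{N}]$. Then, using that $\mathscr{W}$ satisfies $[\operatorname{wM}]$, Lemma \ref{l:KotheNiffWN} converts this into the desired conclusion that $\mathscr{W}$ itself satisfies $[\operatorname{N}]$.

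Given the analytic work already carried out in establishing Corollary \ref{compl} (via Propositions \ref{compl-1} and \ref{compl-2} and Lemma \ref{construction}), no genuine obstacle remains here: the argument is a short sequence of implications. The only step requiring a word of justification is the permanence of nuclearity under passage to complemented subspaces, but this is a classical fact from the structure theory of nuclear locally convex spaces and does not call for a separate argument.
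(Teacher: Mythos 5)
Your proof is correct and is essentially identical to the paper's: apply Corollary \ref{compl}, deduce nuclearity of $\lambda^q[A_\mathscr{W}]$, then conclude via Proposition \ref{nuclear-echelon} and Lemma \ref{l:KotheNiffWN}. The only cosmetic difference is that you justify permanence of nuclearity via the complemented-subspace route, whereas the paper simply uses that nuclearity is inherited by arbitrary subspaces, so complementation is not even needed for this direction.
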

\begin{proof} 
Since nuclearity is inherited to subspaces, Corollary \ref{compl} implies that $\lambda^q[A_{\mathscr{W}}]$ is nuclear. The result therefore follows from Proposition \ref{nuclear-echelon} and Lemma \ref{l:KotheNiffWN}.

\end{proof}
\begin{proposition}\label{ultra-necc-2}
Let $\mathfrak{M}$ be a weight sequence system satisfying $[\operatorname{L}]$, let $\mathscr{W}$ be a weight function system satisfying $[\operatorname{M}]$ and let $q \in [1,\infty]$. Suppose that $\mathcal{S}^{[\mathfrak{M}]}_{[\mathscr{W}],q}(\R^d)$ is non-trivial and nuclear. Then, $A_{\mathscr{W}_\mathfrak{M}}$ satisfies $[\operatorname{N}]$.
\end{proposition}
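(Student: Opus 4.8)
The plan is to establish Proposition \ref{ultra-necc-2} as the Fourier-dual counterpart of Proposition \ref{ultra-necc-1}: whereas there one extracts a complemented copy of $\lambda^q[A_{\mathscr{W}}]$ by sampling $\varphi$ on the lattice $\Z^d$, here I would extract a complemented copy of $\lambda^q[A_{\mathscr{W}_\mathfrak{M}}]$ by sampling the \emph{Fourier transform} $\widehat\varphi$ on $\Z^d$. The guiding heuristic is that $\mathcal{F}$ interchanges decay and regularity, so that the regularity system $\mathfrak{M}$, through its associated functions $\omega_{M^\lambda}$, governs the decay of $\widehat\varphi$ and thereby produces exactly the K\"othe weights $a^\lambda_j = e^{\omega_{M^\lambda}(j)}$ of $A_{\mathscr{W}_\mathfrak{M}}$. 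Once $\lambda^q[A_{\mathscr{W}_\mathfrak{M}}]$ is exhibited as a complemented subspace of the nuclear space $\mathcal{S}^{[\mathfrak{M}]}_{[\mathscr{W}],q}$, it is itself nuclear, and Proposition \ref{nuclear-echelon} yields $[\operatorname{N}]$. Before constructing the maps I would normalize the exponent: since $\mathscr{W}$ satisfies $[\operatorname{M}]$ and $\mathcal{S}^{[\mathfrak{M}]}_{[\mathscr{W}],q}$ is nuclear, Proposition \ref{ultra-necc-1} applies and gives that $\mathscr{W}$ satisfies $[\operatorname{N}]$; by Lemma \ref{reverse-inclusion} the scale $\{\mathcal{S}^{[\mathfrak{M}]}_{[\mathscr{W}],r}\}_r$ then decreases in $r$, so $\mathcal{S}^{[\mathfrak{M}]}_{[\mathscr{W}],\infty}\subseteq\mathcal{S}^{[\mathfrak{M}]}_{[\mathscr{W}],q}$ continuously and, as a subspace of a nuclear space, $\mathcal{S}^{[\mathfrak{M}]}_{[\mathscr{W}],\infty}$ is again nuclear. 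Hence I may assume $q=\infty$.

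The two maps are the Fourier analogues of $S$ and $T_\psi$: for the section I take $T((c_j)_j)=\sum_{j\in\Z^d}c_j\,e^{2\pi i j\cdot}\psi$ and for the retraction $S(\varphi)=(\widehat\varphi(j))_{j\in\Z^d}$. I would fix $\psi\in\widetilde{\mathcal{S}}^{[\mathfrak{M}]}_{[\mathscr{W}]}$ with $\widehat\psi(j)=\delta_{j,0}$; by Poisson summation this is equivalent to $\sum_{k\in\Z^d}\psi(\,\cdot-k)\equiv1$, so such a $\psi$ is provided by Lemma \ref{construction}$(b)$. Since $\widehat{e^{2\pi i j\cdot}\psi}=\widehat\psi(\,\cdot-j)$, one gets $S\circ T=\operatorname{id}$ on $\lambda^\infty[A_{\mathscr{W}_\mathfrak{M}}]$. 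The link to the K\"othe weights comes from the fact that, up to $[\operatorname{L}]$-admissible shifts of the index $\lambda$, the norm $\|e^{2\pi i j\cdot}\psi\|_{\mathcal{S}^{M^\lambda}_{w^\lambda,\infty}}$ is comparable to $e^{\omega_{M^\lambda}(j)}=a^\lambda_j$: the leading contribution to $(e^{2\pi i j\cdot}\psi)^{(\alpha)}$ is $(2\pi i j)^\alpha e^{2\pi i j\cdot}\psi$, and $\sup_{\alpha}|2\pi j|^\alpha/M^\lambda_\alpha=e^{\omega_{M^\lambda}(2\pi j)}/M^\lambda_0$.

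Continuity of $S$ is the easier half and I would carry it out directly. Integrating by parts gives $|j^\alpha\widehat\varphi(j)|\le(2\pi)^{-|\alpha|}\|\varphi^{(\alpha)}\|_{L^1}$, whence $|\widehat\varphi(j)|\,e^{\omega_{M^\lambda}(j)}=\sup_{\alpha}\bigl(|j^\alpha\widehat\varphi(j)|\,M^\lambda_0/M^\lambda_\alpha\bigr)\le M^\lambda_0\sup_{\alpha}(2\pi)^{-|\alpha|}\|\varphi^{(\alpha)}\|_{L^1}/M^\lambda_\alpha$. Here $[\operatorname{N}]$ of $\mathscr{W}$ (via Lemma \ref{lemma-C0}) lets me pass from the $L^\infty$-norm defining $\mathcal{S}^{M^{\mu}}_{w^{\mu},\infty}$ to the $L^1$-norm, via $\|\varphi^{(\alpha)}\|_{L^1}\le\|1/w^{\mu}\|_{L^1}\,\|\varphi^{(\alpha)}w^{\mu}\|_{L^\infty}$ for a sufficiently strong index $\mu$, and $[\operatorname{L}]$ sums the resulting geometric series in $\alpha$; this shows $S:\mathcal{S}^{[\mathfrak{M}]}_{[\mathscr{W}],\infty}\to\lambda^\infty[A_{\mathscr{W}_\mathfrak{M}}]$ is continuous (one may also phrase this as the Fourier-dual of Proposition \ref{compl-1}, using that $[\operatorname{L}]$ forces $[\operatorname{wM}]$ for $\mathscr{W}_\mathfrak{M}$).

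The main obstacle is the continuity of the synthesis map $T$, that is, the Fourier-dual counterpart of Proposition \ref{compl-2}. Estimating $\bigl\|\sum_j c_j e^{2\pi i j\cdot}\psi\bigr\|_{\mathcal{S}^{M^\lambda}_{w^\lambda,\infty}}$ amounts, after Leibniz's rule, to controlling for every $\alpha$ the derivatives $\partial^\beta\bigl(\sum_j c_j e^{2\pi i j\cdot}\bigr)$ of the periodic multiplier against $M^\lambda_\alpha$ and the K\"othe norm of $(c_j)$; the delicate point is balancing the lattice sum over frequencies against the growth of the weight sequences $M^\lambda$, and this is where $[\operatorname{L}]$ and the defining inequality of the associated function (together with $[\operatorname{M}]$ of $\mathscr{W}$ in the reduction step) must be combined with care — precisely the analogue of the weight-splitting $w^\lambda(x+y)\le Cw^\mu(x)w^\nu(y)$ used in Proposition \ref{compl-2}, now for $\mathscr{W}_\mathfrak{M}$. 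Granting the continuity of both $S$ and $T$, the identity $S\circ T=\operatorname{id}$ exhibits $\lambda^\infty[A_{\mathscr{W}_\mathfrak{M}}]$ as a complemented subspace of the nuclear space $\mathcal{S}^{[\mathfrak{M}]}_{[\mathscr{W}],\infty}$; it is therefore nuclear, and Proposition \ref{nuclear-echelon} gives that $A_{\mathscr{W}_\mathfrak{M}}$ satisfies $[\operatorname{N}]$, as required.
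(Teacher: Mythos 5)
Your overall architecture (analysis map $S(\varphi)=(\widehat\varphi(j))_j$, synthesis map $T((c_j)_j)=\sum_j c_j e^{2\pi i j\cdot}\psi$ with $\psi$ from Lemma \ref{construction}$(b)$, $S\circ T=\operatorname{id}$) is the right dual picture, and your treatment of $S$ is sound; but the step you defer --- continuity of $T$ --- is not merely ``delicate'': under the standing hypotheses it is unprovable without assuming what is to be shown, and this is precisely why the paper does not argue via a complemented copy of $\lambda^q[A_{\mathscr{W}_\mathfrak{M}}]$. Concretely, by Leibniz the derivatives of $T(c)$ produce the sums $\sum_{j}|c_j|\,|2\pi j|^{\beta}$, and for $(c_j)_j$ in an $l^\infty$- (or any $l^q$-, $q>1$) ball you only know $|c_j|\le \|c\|\,e^{-\omega_{M^\lambda}(j)}$. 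The pointwise bound $e^{-\omega_{M^\lambda}(j)}|j^\beta|\le M^\lambda_\beta/M^\lambda_0$ is not summable over $j\in\Z^d$; to sum you must either absorb an extra factor $(1+|j|)^{d+1}$ into $e^{\omega_{M^{\lambda'}}(j)}$ at the cost of an index shift $\beta\mapsto\beta+\gamma$, $|\gamma|\le d+1$ --- i.e.\ iterated $[\mathfrak{M}.2]'$, equivalently \eqref{cond-dc}, which is \emph{not} a hypothesis here --- or you must know $\sum_j e^{\omega_{M^{\lambda'}}(j)-\omega_{M^\lambda}(j)}<\infty$ for suitable pairs, which \emph{is} the condition $[\operatorname{N}]$ for $A_{\mathscr{W}_\mathfrak{M}}$ that the proposition asserts. (Condition $[\operatorname{L}]$ only absorbs geometric factors $R^{|\beta|}$, and $[\operatorname{M}]$ for $\mathscr{W}$ is irrelevant to this frequency sum, since modulation does not move the argument of $w^\mu$.) The paper escapes this trap by an asymmetric factorization: the synthesis $T=T_1\circ T_0$ is only defined on $\lambda^1[A_{\mathscr{W}_\mathfrak{M}}]$, where the $l^1$-norm $\sum_j|c_j|e^{\omega_{M^\lambda}(j)}$ itself performs the summation over $j$ (so that $|\sum_j c_j(2\pi ij)^\beta|\le (2\pi)^{|\beta|}(M^\lambda_\beta/M^\lambda_0)\|c\|_{l^1(a^\lambda)}$ and $[\operatorname{L}]$ handles $(2\pi)^{|\beta|}$), while the analysis $S=S_0\circ S_1$ lands in $\lambda^\infty[A_{\mathscr{W}_\mathfrak{M}}]$. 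Then $S\circ T$ is only the natural inclusion $\iota:\lambda^1\hookrightarrow\lambda^\infty$, which does not exhibit a complemented subspace; the extra machinery of Petzsche's Lemma \ref{P-trick} (Dynin--Mityagin equicontinuous basis theorem, plus a duality argument in the Roumieu case) is exactly what upgrades such a factorization of $\iota$ through a (dually) nuclear space to nuclearity of $\lambda^1[A_{\mathscr{W}_\mathfrak{M}}]$. Your proposal omits this ingredient because it assumes the stronger retraction--section pair can be built, which it cannot.

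A secondary flaw is the reduction to $q=\infty$. Nuclearity is inherited by subspaces \emph{with the induced topology}; Lemma \ref{reverse-inclusion} only gives a continuous injection $\mathcal{S}^{[\mathfrak{M}]}_{[\mathscr{W}],\infty}\to\mathcal{S}^{[\mathfrak{M}]}_{[\mathscr{W}],q}$, and the reverse estimate identifying the topologies is Theorem \ref{ft-char-N}$(i)\Rightarrow(ii)$, which requires $[\mathfrak{M}.2]'$ --- again unavailable. So ``$\mathcal{S}^{[\mathfrak{M}]}_{[\mathscr{W}],\infty}$ is nuclear as a subspace of a nuclear space'' does not follow. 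The paper needs no such reduction: the maps $T_1$ (multiplication by $\psi\in\widetilde{\mathcal{S}}^{[\mathfrak{M}]}_{[\mathscr{W}]}$, whose built-in polynomial decay pays for the change of integrability) and $S_1$ (periodization, continuous by Lemma \ref{reverse-inclusion} once Proposition \ref{ultra-necc-1} gives $[\operatorname{N}]$ for $\mathscr{W}$) work for the given $q$ directly.
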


We shall make use of the ensuing result due to Petzsche \cite{Petzsche} in order to show Proposition \ref{ultra-necc-2}.

\begin{lemma} [{\cite[Satz 3.5 and Satz 3.6]{Petzsche}}] \label{P-trick} Let $A$ be a K\"othe set and let $E$ be a lcHs.
\begin{itemize}
\item[$(a)$]Suppose that $E$ is nuclear and that there are continuous linear mappings $T:\lambda^1(A) \rightarrow E$ and $S: E \rightarrow \lambda^\infty(A)$ such that $S \circ T = \iota$, where $\iota: \lambda^1(A) \rightarrow \lambda^\infty(A)$ denotes the natural embedding. Then, $\lambda^1(A)$ is nuclear. 
\item[$(b)$]Suppose that $E'_\beta$ is nuclear and that there are continuous linear mappings $T:\lambda^1\{A\} \rightarrow E$ and $S: E \rightarrow \lambda^\infty\{A\}$ such that $S \circ T = \iota$, where $\iota: \lambda^1\{A\} \rightarrow \lambda^\infty\{A\}$ denotes the natural embedding. Then, $\lambda^1\{A\}$ is nuclear. 
\end{itemize}
\end{lemma}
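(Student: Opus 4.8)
The plan is to reduce everything to the Grothendieck--Pietsch machinery already recorded in Proposition \ref{nuclear-echelon} and Proposition \ref{summable}, using the factorization $S\circ T=\iota$ to transport a summability property of $\lambda^1(A)$ through the nuclear space $E$. Since $\lambda^1(A)$ is a Fréchet space, by Proposition \ref{summable} it suffices to prove that every weakly summable sequence in $\lambda^1(A)$ is absolutely summable; this (or equivalently condition $[\operatorname{N}]$, cf.\ Proposition \ref{nuclear-echelon} and Lemma \ref{l:KotheNiffWN}) is what I aim for. First I would fix a weakly summable sequence $(x_n)_{n\in\N}$ in $\lambda^1(A)$ and push it forward.

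\textbf{Transport through $E$.} Because $T$ is continuous and linear, $(Tx_n)_{n}$ is weakly summable in $E$ (continuous linear maps send the Mackey-bounded family of signed partial sums to a bounded family). Since $E$ is nuclear, every weakly summable sequence in $E$ is absolutely summable; hence $\sum_{n} r(Tx_n)<\infty$ for every continuous seminorm $r$ on $E$. Applying this to seminorms of the form $q\circ S$, where $q\in\csn(\lambda^\infty(A))$ and using $S\circ T=\iota$, I obtain $\sum_{n} q(x_n)=\sum_{n} q(\iota x_n)<\infty$ for all such $q$. Thus the natural embedding carries $\lambda^1(A)$-weakly summable sequences to $\lambda^\infty(A)$-absolutely summable ones, i.e.\ $\sum_{n}\|x_n\|_{\ell^\infty(a^\mu)}<\infty$ for every $\mu$ (in the Roumieu case, for some $\mu$ after invoking regularity as in Lemma \ref{regular}).

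\textbf{The main obstacle: promoting $\lambda^\infty$- to $\lambda^1$-summability.} The hard part is to upgrade absolute summability in $\lambda^\infty(A)$ to absolute summability in $\lambda^1(A)$, i.e.\ to pass from control of $\sup_{j}a^\nu_j|(x_n)_j|$ to control of $\sum_{j}a^\nu_j|(x_n)_j|$. This cannot be done level by level: an inclusion $\ell^1(a^\mu)\to\ell^\infty(a^\lambda)$ may be nuclear \emph{without} $a^\lambda/a^\mu\in\ell^1$ (constant weights on dyadic blocks give a nuclear diagonal with non-summable diagonal, by Grothendieck's inequality), so the factorization's local nuclear data alone is too weak to yield $[\operatorname{N}]$. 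The point I would exploit instead is that $(x_n)$ is \emph{still} weakly summable in $\lambda^1(A)$, and that the factorization is realized by \emph{single} globally defined maps $S,T$ through one fixed nuclear space; this rigidity, combined with the unconditional (lattice) structure of the canonical unit-vector basis of the Köthe spaces, is exactly the combinatorial core of Petzsche's Satz~3.5. Carrying out this step yields $\sum_{n}\|x_n\|_{\ell^1(a^\nu)}<\infty$ for every $\nu$, so $(x_n)$ is absolutely summable in $\lambda^1(A)$, and Proposition \ref{summable} gives nuclearity of $\lambda^1(A)$, proving $(a)$.

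\textbf{The Roumieu case $(b)$.} For $(b)$ I would dualize. Transposing $S\circ T=\iota$ gives $T^{t}\circ S^{t}=\iota^{t}$ with $S^{t}\colon(\lambda^\infty\{A\})'_\beta\to E'_\beta$ and $T^{t}\colon E'_\beta\to(\lambda^1\{A\})'_\beta$, where now $E'_\beta$ is the nuclear space playing the role of the middle term. By echelon/co-echelon duality the strong duals $(\lambda^1\{A\})'_\beta$ and $(\lambda^\infty\{A\})'_\beta$ are Köthe echelon spaces of the projective type, and $\iota^{t}$ is again the natural embedding between them (here the regularity of the $(LB)$-spaces, as in Lemma \ref{regular}, guarantees that the duals and the transposed embedding are the expected ones). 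Applying the argument of $(a)$ to this transposed sandwich shows the relevant Fréchet echelon dual is nuclear; since a $(DF)$-space is nuclear exactly when its strong dual is, this returns the nuclearity of $\lambda^1\{A\}$. The decisive difficulty is identical to that in $(a)$, now located on the dual side.
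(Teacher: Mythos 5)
There is a genuine gap, and you have in fact flagged it yourself: the entire content of the lemma is concentrated in the step you label ``promoting $\lambda^\infty$- to $\lambda^1$-summability,'' and at that point your text says only that the ``rigidity'' of the factorization together with the lattice structure of the unit-vector basis ``is exactly the combinatorial core of Petzsche's Satz 3.5'' and that ``carrying out this step yields'' the conclusion. No argument is given. Everything before that point is correct but cheap: pushing a weakly summable sequence $(x_n)$ through $T$, using nuclearity of $E$ via Proposition \ref{summable}, and composing with $S$ only produces $\sum_n \|x_n\|_{l^\infty(a^\mu)} < \infty$, and (as you correctly observe) there is no levelwise way to convert $\ell^\infty(a^\mu)$-control into $\ell^1(a^\nu)$-control --- that conversion is essentially equivalent to the condition $(\operatorname{N})$ one is trying to establish. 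So the proposal transports the easy information and then asserts the hard implication. As written, this is not a proof of part $(a)$, and since your part $(b)$ reduces to part $(a)$ by transposition, the gap propagates to the whole lemma.

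The missing idea, which is what the paper actually does, is to abandon the summable-sequence criterion for $\lambda^1(A)$ altogether and instead prove that $T$ is a \emph{topological isomorphism onto its image}, so that $\lambda^1(A)$ inherits nuclearity as a subspace of $E$. Concretely: $(T(e_i))_{i \in \Z^d}$ is a Schauder basis of $T(\lambda^1(A))$ with coefficient functionals $\eta_i = \xi_i \circ S$ (this is where $S \circ T = \iota$ enters); the factorization makes this basis equicontinuous, because $|\langle \eta_i, x\rangle| \, p(T(e_i)) \leq |\langle \xi_i, S(x)\rangle| \, a^\lambda_i \leq \|S(x)\|_{l^\infty(a^\lambda)}$; and then the Dynin--Mityagin basis theorem \cite[Theorem 10.2.1]{Pietsch}, applied in the nuclear space $T(\lambda^1(A))$, upgrades the equicontinuous basis to an \emph{absolute} one, which yields continuity of $T^{-1}$. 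That theorem is precisely the ``combinatorial core'' your sketch gestures at; without invoking it (or reproving its content), your route cannot be completed. A secondary imprecision in $(b)$: the strong duals $(\lambda^1\{A\})'_\beta$ and $(\lambda^\infty\{A\})'_\beta$ are not simply echelon spaces with $\iota^t$ the natural embedding; the paper only uses continuous embeddings $\iota_1 : \lambda^1(A^\circ) \rightarrow (\lambda^\infty\{A\})'_\beta$ and $\iota_2 : (\lambda^1\{A\})'_\beta \rightarrow \lambda^\infty(A^\circ)$ with $A^\circ = \{1/a^{1/\lambda} \, | \, \lambda > 0\}$, so that $(\iota_2 \circ T^t) \circ (S^t \circ \iota_1)$ is the natural map $\lambda^1(A^\circ) \rightarrow \lambda^\infty(A^\circ)$ and part $(a)$ applies, with Proposition \ref{nuclear-echelon} converting nuclearity of $\lambda^1(A^\circ)$ back into nuclearity of $\lambda^1\{A\}$; your stronger identification of the duals is not needed and not quite accurate.
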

\begin{proof}
This is essentially shown in \cite[Satz 3.5 and Satz 3.6]{Petzsche} but we repeat the argument here for the sake of completeness and because our assumptions are slightly more general. 

$(a)$ Since nuclearity is inherited to subspaces, it suffices to show that $T$ is a topological isomorphism onto its image. We write $e_i = (\delta_{j,i})_{j \in \Z^d}$ for  $i \in \Z^d$. Then, $(e_i)_{i \in \Z^d}$ is a Schauder basis for  $\lambda^1(A)$ with coefficient functionals
$$
\xi_i: \lambda^1(A) \rightarrow \C, \quad \langle \xi_i,  (c_j)_{j \in \Z^d}\rangle = c_i, \qquad i \in \Z^d.
$$
Since $T$ is continuous and $S \circ T = \iota$,  $(T(e_i))_{i \in \Z^d}$ is a Schauder basis for  $T(\lambda^1(A))$ with coefficient functionals $\eta_i = \xi_i \circ T^{-1} = \xi_i \circ S$ for $i \in \Z^d$. We claim that the Schauder basis $(T(e_i))_{i \in \Z^d}$ is equicontinuous, that is,
$$
\forall p \in \operatorname{csn}(E) \, \exists q \in \operatorname{csn}(E) \, \forall x \in T(\lambda^1(A)) \, : \, \sup_{i \in \Z^d} |  \langle \eta_i, x \rangle | p(T(e_i)) \leq q(x).
$$
Let $p \in \operatorname{csn}(E)$ be arbitrary. As $T$ is continuous, there is $\lambda > 0$ such that
$$
|  \langle \eta_i, x \rangle | p(T(e_i)) \leq  |  \langle \xi_i, S(x) \rangle | \|e_i\|_{l^1(a^\lambda)} = |\langle \xi_i, S(x) \rangle | a^\lambda_{i} \leq \| S(x) \|_{l^\infty(a^\lambda)}
$$
for all $x \in T(\lambda^1(A))$ and $i \in \Z^d$. The claim now follows from the continuity of $S$.  Since $T(\lambda^1(A))$ is nuclear (as a subspace of the nuclear space $E$), the Dymin-Mityagin basis theorem \cite[Theorem 10.2.1]{Pietsch} yields that $(T(e_i))_{i \in \Z^d}$ is an absolute Schauder basis for $T(\lambda^1(A))$, that is,
\begin{equation}
\forall p \in \operatorname{csn}(E) \, \exists q \in \operatorname{csn}(E) \, \forall x \in T(\lambda^1(A)) \, : \, \sum_{i \in \Z^d} |  \langle \eta_i, x \rangle | p(T(e_i)) \leq q(x).
\label{absolute}
\end{equation}
We now show that $T^{-1}: T(\lambda^1(A)) \rightarrow \lambda^1(A)$ is continuous. Let $\lambda > 0$ be arbitrary. Since $S$ is continuous, there is $p \in  \operatorname{csn}(E)$ such that
\begin{align*}
&\| (c_i)_{i \in \Z^d} \|_{l^1(a^\lambda)} = \sum_{i \in \Z^d} |c_i| \|e_i\|_{l^1(a^\lambda)} = \sum_{i \in \Z^d} |\langle \eta_i, T((c_i)_{i \in \Z^d}) \rangle| \|S(T(e_i))\|_{l^\infty(a^\lambda)} \\
&\leq  \sum_{i \in \Z^d} |\langle \eta_i, T((c_i)_{i \in \Z^d}) \rangle| p(T(e_i))
\end{align*}
for all $(c_i)_{i \in \Z^d} \in \lambda^1(A)$, whence the continuity of $T^{-1}$ follows from \eqref{absolute}. 

$(b)$ By transposing, we obtain continuous linear mappings $T^t: E'_\beta \rightarrow (\lambda^1\{A\})'_\beta$ and  $S:  (\lambda^\infty\{A\})'_\beta \rightarrow E'_\beta$ such that $T^t \circ S^t = \iota^t$.
Consider the K\"othe set $A^\circ = \{ 1/a^{1/\lambda} \, | \, \lambda > 0\}$ and the  natural continuous embeddings $\iota_1: \lambda^1(A^\circ) \rightarrow (\lambda^\infty\{A\})'_\beta$ 
and $\iota_2: (\lambda^1\{A\})'_\beta \rightarrow  \lambda^\infty(A^\circ)$. Note that $(\iota_2 \circ T^t) \circ (S^t \circ \iota_1) = \tau$, where $\tau: \lambda^1(A^\circ) \rightarrow \lambda^\infty(A^\circ)$ denotes the natural embedding. Hence, part $(a)$ yields that $\lambda^1(A^\circ)$ is nuclear, which is equivalent to the nuclearity of $\lambda^1\{A\}$ by Proposition \ref{nuclear-echelon}.
\end{proof}
\begin{proof}[Proof of Proposition \ref{ultra-necc-2}] By Proposition \ref{nuclear-echelon}, it suffices to show that $\lambda^1[A_{\mathscr{W}_\mathfrak{M}}]$ is nuclear. To this end, we use Lemma \ref{P-trick}  with $A = A_{\mathscr{W}_\mathfrak{M}}$ and  $E = \mathcal{S}^{[\mathfrak{M}]}_{[\mathscr{W}],q}$ (in the Roumieu case, $E'_\beta$ is nuclear as the strong dual of a nuclear $(DF)$-space).  For $r = 1$ or $r = \infty$ we define  $\mathcal{E}^{[\mathfrak{M}]}_{\operatorname{per},r}$ as the space consisting of all $\Z^d$-periodic functions $\varphi \in C^\infty(\R^d)$ such that 
$$
\sup_{\alpha \in \N^d} \frac{1}{M^\lambda_{\alpha}} \|\varphi^{(\alpha)}\|_{L^r\left([-\frac{1}{2}, \frac{1}{2}]^d \right)} < \infty
$$
 for all $\lambda > 0$ (for some $\lambda >0$). We endow $\mathcal{E}^{[\mathfrak{M}]}_{\operatorname{per},r}$  with its natural Fr\'echet space topology ($(LB)$-space topology). The mappings
$$
T_0 : \lambda^1[A_{\mathscr{W}_\mathfrak{M}}] \rightarrow \mathcal{E}^{[\mathfrak{M}]}_{\operatorname{per},\infty}, \quad T_0( (c_j)_{j \in \Z^d}) = \left [\, \xi \to \sum_{j \in \Z^d} c_j e^{-2\pi i j \cdot \xi} \, \right]
$$
and
$$
S_0 : \mathcal{E}^{[\mathfrak{M}]}_{\operatorname{per},1} \rightarrow \lambda^\infty[A_{\mathscr{W}_\mathfrak{M}}], \qquad S_0(\varphi) = \left( \int_{[-\frac{1}{2},\frac{1}{2}]^d} \varphi(\xi) e^{2\pi i j \cdot \xi} d\xi \right)_{j \in \Z^d}
$$
are continuous. Next, choose $\psi$ as in Lemma \ref{construction}$(b)$ and consider the continuous linear mapping
$$
T_1 :  \mathcal{E}^{[\mathfrak{M}]}_{\operatorname{per},\infty} \rightarrow  \mathcal{S}^{[\mathfrak{M}]}_{[\mathscr{W}],q}, \quad T_1(\varphi) = \psi \varphi.
$$
Note that $\mathscr{W}$ satisfies $[\operatorname{N}]$ by Proposition \ref{ultra-necc-1}. Hence, Lemma \ref{reverse-inclusion} yields that the mapping
$$
S_1 : \mathcal{S}^{[\mathfrak{M}]}_{[\mathscr{W}],q} \rightarrow  \mathcal{E}^{[\mathfrak{M}]}_{\operatorname{per},1}, \qquad S_1(\varphi) = \sum_{j \in \Z^{d}} \varphi( \,\cdot -j)
$$
is continuous. Finally, we define the continuous linear mappings $T = T_1 \circ T_0:  \lambda^1[A_{\mathscr{W}_\mathfrak{M}}]  \rightarrow  \mathcal{S}^{[\mathfrak{M}]}_{[\mathscr{W}],q}$ and $S = S_0 \circ S_1 : \mathcal{S}^{[\mathfrak{M}]}_{[\mathscr{W}],q} \rightarrow \lambda^\infty[A_{\mathscr{W}_\mathfrak{M}}]$. The choice of $\psi$ implies that $S \circ T = \iota$.
\end{proof}

We obtain the following two important results.
\begin{theorem}\label{ultra-char}
Let $\mathfrak{M}$ be an isotropically decomposable accelerating weight sequence system satisfying $[\operatorname{L}]$ and let $\mathscr{W}$ be a weight function system satisfying $[\operatorname{M}]$. Suppose that $\mathcal{S}^{[\mathfrak{M}]}_{[\mathscr{W}],q} \neq \{0\}$ for some $q \in [1,\infty]$. Then, the following statements are equivalent:
\begin{itemize}
\item[$(i)$] $\mathfrak{M}$ satisfies $[\mathfrak{M}.2]'$ and $\mathscr{W}$ satisfies $[\operatorname{N}]$.
\item[$(ii)$] $\mathcal{S}^{[\mathfrak{M}]}_{[\mathscr{W}],q}$ is nuclear for all $q \in [1,\infty]$.
\item[$(iii)$] $\mathcal{S}^{[\mathfrak{M}]}_{[\mathscr{W}],q}$ is nuclear for some $q \in [1,\infty]$.
\end{itemize}
\end{theorem}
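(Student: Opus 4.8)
The plan is to prove the cycle $(i)\Rightarrow(ii)\Rightarrow(iii)\Rightarrow(i)$, with essentially all of the difficulty concentrated in the final implication; the first two are assemblies of results already at hand.

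For $(i)\Rightarrow(ii)$ I would simply feed the hypotheses into Theorem \ref{ultra-suff}. Since $[\operatorname{M}]$ implies $[\operatorname{wM}]$, under $(i)$ the system $\mathfrak{M}$ satisfies $[\operatorname{L}]$ and $[\mathfrak{M}.2]'$, while $\mathscr{W}$ satisfies $[\operatorname{wM}]$ and $[\operatorname{N}]$; these are exactly the hypotheses of Theorem \ref{ultra-suff}, which yields the nuclearity of $\mathcal{S}^{[\mathfrak{M}]}_{[\mathscr{W}]}$. The very same hypotheses are those under which Theorem \ref{ft-char-N} guarantees $\mathcal{S}^{[\mathfrak{M}]}_{[\mathscr{W}],q}=\mathcal{S}^{[\mathfrak{M}]}_{[\mathscr{W}],r}$ as locally convex spaces for all $q,r\in[1,\infty]$, so nuclearity of the single space $\mathcal{S}^{[\mathfrak{M}]}_{[\mathscr{W}]}$ transfers to every value of $q$ (nuclearity being a property of the locally convex structure). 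The implication $(ii)\Rightarrow(iii)$ is trivial.

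The heart of the matter is $(iii)\Rightarrow(i)$, for which I fix a $q$ making $\mathcal{S}^{[\mathfrak{M}]}_{[\mathscr{W}],q}$ nuclear. First I would dispose of nontriviality: because $[\operatorname{M}]$ gives $[\operatorname{wM}]$ and $\mathfrak{M}$ satisfies $[\operatorname{L}]$, Lemma \ref{non-trivial} promotes the hypothesis that $\mathcal{S}^{[\mathfrak{M}]}_{[\mathscr{W}],q_0}\neq\{0\}$ for one $q_0$ to nontriviality for every $q$, so in particular the nuclear space under consideration is nontrivial. Proposition \ref{ultra-necc-1} then immediately delivers that $\mathscr{W}$ satisfies $[\operatorname{N}]$, which is one half of $(i)$. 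For the other half, Proposition \ref{ultra-necc-2} gives that the associated K\"othe set $A_{\mathscr{W}_{\mathfrak{M}}}$ satisfies $[\operatorname{N}]$.

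It then remains to convert this K\"othe-set condition back into the sequence condition $[\mathfrak{M}.2]'$, and this is where the structural hypotheses on $\mathfrak{M}$ become indispensable. Since $\mathfrak{M}$ is isotropically decomposable and satisfies $[\operatorname{L}]$, Lemma \ref{M2wss}$(b)$ applies: its statement $(ii)$, that $A_{\mathscr{W}_{\mathfrak{M}}}$ satisfies $[\operatorname{N}]$, is equivalent to its statement $(iii)$, that $\mathscr{W}_{\mathfrak{M}}$ satisfies $[\operatorname{N}]$; and because $\mathfrak{M}$ is in addition accelerating, the implication $(iii)\Rightarrow(i)$ of that lemma produces $[\mathfrak{M}.2]'$. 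Combined with $[\operatorname{N}]$ for $\mathscr{W}$, this establishes $(i)$ and closes the cycle. I expect the only genuinely delicate point to be precisely this last conversion: passing from an integrability statement about the lattice values encoded in $A_{\mathscr{W}_{\mathfrak{M}}}$ to the pointwise derivation-type estimate $[\mathfrak{M}.2]'$ hinges on the accelerating hypothesis, through the monotonicity of the counting functions $m^{\lambda}(t)$ exploited in the proof of Lemma \ref{M2wss}, and it is here that the regularity of $\mathfrak{M}$ cannot be dispensed with.
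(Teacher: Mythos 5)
Your proof is correct and follows essentially the same route as the paper's: $(i)\Rightarrow(ii)$ via Theorem \ref{ultra-suff} (with Theorem \ref{ft-char-N} justifying the transfer of nuclearity across all $q$), and $(iii)\Rightarrow(i)$ by combining Lemma \ref{non-trivial}, Proposition \ref{ultra-necc-1}, Proposition \ref{ultra-necc-2}, and Lemma \ref{M2wss}$(b)$ exactly as the paper does. Your write-up merely makes explicit the chain $A_{\mathscr{W}_{\mathfrak{M}}}$ satisfies $[\operatorname{N}]$ $\Rightarrow$ $\mathscr{W}_{\mathfrak{M}}$ satisfies $[\operatorname{N}]$ $\Rightarrow$ $[\mathfrak{M}.2]'$ (using the accelerating hypothesis), which the paper compresses into a single citation of Lemma \ref{M2wss}.
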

\begin{proof}
$(i) \Rightarrow (ii)$ This has been shown in Theorem \ref{ultra-suff}.

$(ii) \Rightarrow (iii)$ Trivial.

$(iii) \Rightarrow (i)$ In view of Lemma \ref{non-trivial}, $\mathscr{W}$ satisfies $[\operatorname{N}]$ by  Proposition \ref{ultra-necc-1}, while $\mathfrak{M}$ satisfies $[\mathfrak{M}.2]'$ by Proposition \ref{ultra-necc-2} and Lemma \ref{M2wss}.
\end{proof}

\begin{theorem}\label{ultra-char-2}
Let $\mathfrak{M}$ be a weight sequence system satisfying $[\operatorname{L}]$ and $[\mathfrak{M}.2]'$. Let $\mathscr{W}$ be a weight function system satisfying $[\operatorname{M}]$. Suppose that $\mathcal{S}^{[\mathfrak{M}]}_{[\mathscr{W}],q} \neq \{0\}$ for some $q \in [1,\infty]$. Then, the following statements are equivalent:
\begin{itemize}
\item[$(i)$] $\mathscr{W}$ satisfies $[\operatorname{N}]$.
\item[$(ii)$] $\mathcal{S}^{[\mathfrak{M}]}_{[\mathscr{W}],q}$ is nuclear for all $q \in [1,\infty]$.
\item[$(iii)$] $\mathcal{S}^{[\mathfrak{M}]}_{[\mathscr{W}],q}$ is nuclear for some $q \in [1,\infty]$.
\item[$(iv)$] $\mathcal{S}^{[\mathfrak{M}]}_{[\mathscr{W}],q} = \mathcal{S}^{[\mathfrak{M}]}_{[\mathscr{W}],r}$ as locally convex spaces for all $q,r \in  [1,\infty]$.
\item[$(v)$] $\mathcal{S}^{[\mathfrak{M}]}_{[\mathscr{W}],q} = \mathcal{S}^{[\mathfrak{M}]}_{[\mathscr{W}],r}$ as sets for some  $q,r \in  [1,\infty]$ with $q \neq r$.
\end{itemize}
\end{theorem}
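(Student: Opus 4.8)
The plan is to recognize that, under the present hypotheses, this theorem is essentially a repackaging of the three pillars already established: Theorem \ref{ft-char-N}, Theorem \ref{ultra-suff}, and Proposition \ref{ultra-necc-1}. The key observation is that $[\operatorname{M}]$ implies $[\operatorname{wM}]$, so every hypothesis required to invoke these results in full is in place; in particular, the converse half of Theorem \ref{ft-char-N} (the implication needing $[\operatorname{M}]$) is available. The strategy is to close the cycle $(i) \Rightarrow (ii) \Rightarrow (iii) \Rightarrow (i)$ and to obtain $(i) \Leftrightarrow (iv) \Leftrightarrow (v)$ as a direct citation.

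First I would dispatch the equivalences $(i) \Leftrightarrow (iv) \Leftrightarrow (v)$ by appealing to Theorem \ref{ft-char-N}: its hypotheses ($\mathfrak{M}$ satisfies $[\operatorname{L}]$ and $[\mathfrak{M}.2]'$, $\mathscr{W}$ satisfies $[\operatorname{wM}]$, and non-triviality for some $q$) hold verbatim, and the extra assumption $[\operatorname{M}]$ supplies the missing implication $(v) \Rightarrow (i)$. Thus $(i)$, $(iv)$, and $(v)$ are already equivalent, which disposes of the two sequence-space-equality characterizations at once.

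Next, for $(i) \Rightarrow (ii)$, I would note that under $(i)$ the system $\mathscr{W}$ satisfies both $[\operatorname{wM}]$ and $[\operatorname{N}]$, so Theorem \ref{ultra-suff} applies and yields nuclearity of $\mathcal{S}^{[\mathfrak{M}]}_{[\mathscr{W}],\infty}$. Since, by the already-proved $(i) \Rightarrow (iv)$, the spaces $\mathcal{S}^{[\mathfrak{M}]}_{[\mathscr{W}],q}$ all coincide as locally convex spaces, this single nuclearity statement propagates to every $q \in [1,\infty]$. The implication $(ii) \Rightarrow (iii)$ is trivial. For $(iii) \Rightarrow (i)$, I would first use Lemma \ref{non-trivial} (applicable since $\mathfrak{M}$ satisfies $[\operatorname{L}]$ and $\mathscr{W}$ satisfies $[\operatorname{wM}]$) to transfer the non-triviality hypothesis to the particular $q$ for which nuclearity is assumed, and then invoke Proposition \ref{ultra-necc-1} to conclude that $\mathscr{W}$ satisfies $[\operatorname{N}]$, i.e.\ $(i)$.

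Assembling these gives $(i) \Rightarrow (ii) \Rightarrow (iii) \Rightarrow (i)$ together with $(i) \Leftrightarrow (iv) \Leftrightarrow (v)$, so all five statements are equivalent. I do not expect a genuine obstacle here: the analytic substance has already been absorbed into the cited results, and the only points demanding care are bookkeeping ones, namely verifying that the non-triviality hypothesis lands on the correct index $q$ via Lemma \ref{non-trivial}, and that nuclearity of the single space $\mathcal{S}^{[\mathfrak{M}]}_{[\mathscr{W}],\infty}$ transfers to all $q$ through the topological identity of Theorem \ref{ft-char-N}.
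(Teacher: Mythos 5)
Your proposal is correct and follows essentially the same route as the paper, whose proof is precisely the one-line citation you unpack: Lemma \ref{non-trivial} to transfer non-triviality to the relevant index $q$, Theorem \ref{ft-char-N} for $(i)\Leftrightarrow(iv)\Leftrightarrow(v)$, Theorem \ref{ultra-suff} for $(i)\Rightarrow(ii)$, and Proposition \ref{ultra-necc-1} for $(iii)\Rightarrow(i)$. Your bookkeeping (checking that $[\operatorname{M}]$ implies $[\operatorname{wM}]$ and that nuclearity propagates across $q$ via the topological identity) is exactly the verification the paper leaves implicit.
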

\begin{proof}
 In view of Lemma \ref{non-trivial}, this follows from Theorem \ref{ft-char-N}, Theorem \ref{ultra-suff} and Proposition \ref{ultra-necc-1}.
\end{proof}

Note that, by Lemma \ref{lemma-1}, Theorem \ref{intro-1} and Theorem \ref{intro-2} stated in the introduction are immediate corollaries of Theorem \ref{ultra-char} and Theorem \ref{ultra-char-2}, respectively. Actually, the isotropy of the weight sequences can be relaxed there to $M$ and $A$ being isotropically decomposable. Furthermore, combining Theorem \ref{ultra-char-2} with Lemma \ref{lemma-2}, we obtain the following result for spaces of Beurling-Bj\"{o}rck type.

\begin{theorem}\label{intro-3}
Let $\omega$ be a BMT weight function and let $\eta$ be a non-negative non-decreasing continuous function on  $[0,\infty)$ satisfying $(\alpha)$. Suppose that  $\mathcal{S}^{[\omega]}_{[\eta],q} \neq \{0\}$ for some $q \in [1,\infty]$. Then, the following statements are equivalent:
\begin{itemize}
\item[$(i)$] $\eta$ satisfies $[\gamma]$.
\item[$(ii)$] $\mathcal{S}^{[\omega]}_{[\eta],q}$ is nuclear for all $q \in [1,\infty]$.
\item[$(iii)$] $\mathcal{S}^{[\omega]}_{[\eta],q}$ is nuclear for some $q \in [1,\infty]$.
\item[$(iv)$] $\mathcal{S}^{[\omega]}_{[\eta],q}$ =  $\mathcal{S}^{[\omega]}_{[\eta],r}$ as locally convex spaces for all $q,r \in  [1,\infty]$.
\item[$(v)$] $\mathcal{S}^{[\omega]}_{[\eta],q}$ =  $\mathcal{S}^{[\omega]}_{[\eta],r}$ as sets for some  $q,r \in  [1,\infty]$ with $q \neq r$.
\end{itemize}
\end{theorem}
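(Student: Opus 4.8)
The plan is to recognize this statement as a direct specialization of Theorem \ref{ultra-char-2} to the weight sequence system $\mathfrak{M}_\omega$ and the weight function system $\mathscr{W}_\eta$, using the definition $\mathcal{S}^{[\omega]}_{[\eta],q} = \mathcal{S}^{[\mathfrak{M}_\omega]}_{[\mathscr{W}_\eta],q}$ introduced in Section \ref{sect-3}. Accordingly, the entire argument reduces to checking that the hypotheses of Theorem \ref{ultra-char-2} are met in this setting and then translating its conclusion back into conditions on $\omega$ and $\eta$.

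First I would verify the structural hypotheses. Since $\omega$ is a BMT weight function, Lemma \ref{lemma-2}$(a)$ guarantees that $\mathfrak{M}_\omega$ satisfies both $[\operatorname{L}]$ and $[\mathfrak{M}.2]'$. Since $\eta$ is non-negative, non-decreasing, continuous and satisfies $(\alpha)$, Lemma \ref{lemma-2}$(b)$ ensures that $\mathscr{W}_\eta$ satisfies $[\operatorname{M}]$. Together with the standing non-triviality assumption $\mathcal{S}^{[\omega]}_{[\eta],q} \neq \{0\}$ for some $q \in [1,\infty]$, these are precisely the hypotheses required to invoke Theorem \ref{ultra-char-2} with $\mathfrak{M} = \mathfrak{M}_\omega$ and $\mathscr{W} = \mathscr{W}_\eta$.

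Applying that theorem then yields the equivalence of statements $(ii)$--$(v)$ with the single condition that $\mathscr{W}_\eta$ satisfies $[\operatorname{N}]$. It only remains to identify this last condition with statement $(i)$, and this is exactly the content of Lemma \ref{lemma-2}$(c)$: the function $\eta$ satisfies $[\gamma]$ if and only if $\mathscr{W}_\eta$ satisfies $[\operatorname{N}]$. Chaining this equivalence into the list produced by Theorem \ref{ultra-char-2} completes the argument.

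There is no substantial obstacle here; all of the analytic content has already been carried out, on the one hand in the general Theorem \ref{ultra-char-2}, and on the other hand in the translation lemmas of Section \ref{sect-2}. The only point demanding genuine care is the dictionary between conditions on the generating function $\eta$ and conditions on the associated weight function system $\mathscr{W}_\eta$---in particular the equivalence of $[\gamma]$ with $[\operatorname{N}]$ furnished by Lemma \ref{lemma-2}$(c)$, whose verification follows the same averaging argument as the implication $(iii) \Rightarrow (i)$ in Lemma \ref{M2wss}$(b)$.
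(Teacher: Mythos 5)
Your proposal is correct and is exactly the paper's own argument: the authors state that Theorem \ref{intro-3} follows by combining Theorem \ref{ultra-char-2} (applied with $\mathfrak{M} = \mathfrak{M}_\omega$ and $\mathscr{W} = \mathscr{W}_\eta$) with Lemma \ref{lemma-2}, which supplies precisely the hypothesis checks and the dictionary $[\gamma] \Leftrightarrow [\operatorname{N}]$ that you describe.
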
 

\section{Projective description}
In this auxiliary section, we provide a projective description of the Gelfand-Shilov spaces $\mathcal{S}^{\{\mathfrak{M}\}}_{\{\mathscr{W}\}}$. This result will be used in the next section to prove kernel theorems. 

We start by recalling some basic results about the projective description of weighted $(LB)$-spaces of continuous functions \cite{Bierstedt}.  Let $X$ be a completely regular Hausdorff space. Given a non-negative function $v$ on $X$, we write $Cv(X)$ for the seminormed space consisting of all $f \in C(X)$ such that $\|f\|_v = \sup_{x \in X} |f(x)|v(x) < \infty$. If $v$ is positive and continuous, then $Cv(X)$ is a Banach space.  A family $\mathscr{V} = \{ v^{\lambda} \,|\, \lambda \in \R_{+} \}$ consisting of positive continuous functions $v^{\lambda}$ on $X$ such that $v^{\lambda}(x) \leq v^{\mu}(x)$ for all $x \in X$ and $\mu \leq \lambda$ is said to be a \emph{Nachbin family on $X$}. We define the associated $(LB)$-space
$$
\mathscr{V}C(X)= \varinjlim_{\lambda \to \infty} Cv^\lambda(X).
$$
The \emph{maximal Nachbin family associated with $\mathscr{V}$}, denoted by $ \overline{V}=\overline{V}(\mathscr{V}) $, is given by the space consisting of all non-negative upper semicontinuous functions $v$ on $X$ such that $\sup_{x \in X} v(x)/v^\lambda(x) < \infty$ for all $\lambda \in \R_+$. The \emph{projective hull of $\mathscr{V}C(X)$} is defined as the space $C\overline{V}(X)$ consisting of all $f \in C(X)$ such that $\|f\|_v < \infty$ for all 
$v \in \overline{V}$. We endow $C\overline{V}(X)$ with the locally convex topology generated by the system of seminorms $\{ \| \, \cdot \, \|_v \, | \, v \in \overline{V} \}$. The spaces $\mathscr{V}C(X)$ and $C\overline{V}(X)$ are always equal as sets. If $\mathscr{V}$ satisfies  the condition
\cite[p.\ 94]{Bierstedt}
\begin{itemize}
\item[$(\operatorname{S})$] $\forall \lambda \in \R_+  \, \exists \mu \in \R_+ \, : \, v^\mu/v^\lambda \mbox{ vanishes at infinity}$,
\end{itemize}
then these spaces also coincide topologically \cite[Corollary 5, p.\ 116]{Bierstedt}.

Let $X_j$ be a completely regular Hausdorff space and let $\mathscr{V}_j = \{ v^{\lambda}_j \,|\, \lambda \in \R_{+} \}$ be a Nachbin family on $X_j$ for $j = 1,2$. Similarly as in Section \ref{sect-2}, we denote by $\mathscr{V}_1 \otimes \mathscr{V}_2$ the Nachbin family  $\{v^\lambda_1 \otimes v^\lambda_2 \, | \, \lambda \in \R_+ \}$ on $X_1 \times X_2$, where $v^\lambda_1 \otimes v^\lambda_2(x_1,x_2) = v^\lambda_1(x_1)v^\lambda_2(x_2)$, $x_1 \in X_1, x_2 \in X_2$. Note that $\mathscr{V}_1 \otimes \mathscr{V}_2$ satisfies $(\operatorname{S})$ if and only if both $\mathscr{V}_1$ \and $\mathscr{V}_2$ do so. Moreover, $\overline{V}(\mathscr{V}_1) \otimes \overline{V}(\mathscr{V}_2)$ is upwards dense in $\overline{V}(\mathscr{V}_1 \otimes \mathscr{V}_2)$, that is, for every $v \in \overline{V}(\mathscr{V}_1 \otimes \mathscr{V}_2)$ there are $v_j \in \overline{V}(\mathscr{V}_j)$, $j = 1,2$, such that $v(x_1,x_2) \leq v_1 \otimes v_2(x_1,x_2)$ for all $x_1 \in X_1, x_2 \in X_2$. 

Note that every weight function system $\mathscr{W}$ on $\R^d$ is a Nachbin family on $X = \R^d$. Lemma \ref{lemma-C0} implies that  $\mathscr{W}$ satisfies $(\operatorname{S})$ if   $\{\operatorname{N}\}$ and $\{\operatorname{wM}\}$ hold for $\mathscr{W}$. Likewise,  a weight sequence system $\mathfrak{M}$ on $\N^d$ defines a Nachbin family on $X = \N^d$ via
$
\M^\circ = \{ 1/M^\lambda \, | \, \lambda \in \R^+\}.
$
If  $\mathfrak{M}$ satisfies $\{L\}$, then $\mathfrak{M}^\circ$ satisfies $(\operatorname{S})$. We define $\overline{V}(\mathfrak{M})$ as the family consisting of all  sequences $M$ of positive numbers such that $1/M \in \overline{V}(\mathfrak{M}^\circ)$. More concretely,  $\overline{V}(\mathfrak{M})$ consists of all sequences $M$ of positive numbers such that $\sup_{\alpha \in \N^d} M^\lambda_\alpha/M_\alpha < \infty$ for all $\lambda\in\mathbb{R}_{+}$. Furthermore, a sequence $(a_\alpha)_{\alpha \in \N^d}$ of positive numbers satisfies $\sup_{\alpha \in \N^d} a_\alpha/M^\lambda_\alpha < \infty$ for some $\lambda \in \R_+$ if and only if $\sup_{\alpha \in \N^d} a_\alpha/M_\alpha < \infty$ for all $M \in \overline{V}(\mathfrak{M})$.

\begin{remark} Denote by $\mathfrak{R}$  the family consisting of all non-decreasing sequences $(r_j)_{j \in \N}$ of positive numbers such that $r_j \to \infty$ as $j \to \infty$. Let $M$ be a weight sequence. Then, the set 
\begin{equation}
\{ (M_\alpha\prod_{j=0}^{|\alpha|} r_j)_{\alpha \in \N^d} \, | \, (r_j)_{j \in \N} \in \mathfrak{R}\}
\label{proj-r}
\end{equation}
 is downwards dense in $\overline{V}(\mathfrak{M}_M)$ (cf.\ \cite[Lemma 3.4]{Komatsu3}). The family $\mathfrak{R}$ was introduced by Komatsu to obtain a projective description of the space $\mathcal{E}^{\{M\}}(\Omega)$ of ultradifferentiable functions of Roumieu type \cite[Proposition 3.5]{Komatsu3}. Later on, this family was also used by Pilipovi\'c to give a projective description of the Gelfand-Shilov spaces $\mathcal{S}^{\{M\}}_{\{A\}}$ \cite[Lemma 4]{Pil94}.  For general weight sequence systems $\mathfrak{M}$, the family $\overline{V}(\mathfrak{M})$ is the natural generalization of the family in \eqref{proj-r}.
\end{remark}

We are ready to state and prove the main result of this section.
\begin{theorem}\label{projective-description}
Let $\mathfrak{M}$ be a weight sequence system satisfying $\{\operatorname{L}\}$ and $\{\mathfrak{M}.2\}'$, and let $\mathscr{W}$ be a weight function system satisfying $\{\operatorname{wM}\}$ and $\{\operatorname{N}\}$. Then, $\varphi \in C^\infty(\R^d)$ belongs to $\mathcal{S}^{\{\mathfrak{M}\}}_{\{\mathscr{W}\}}$ if and only if  $\| \varphi \|_{\mathcal{S}^{M}_{w,\infty}} < \infty$ for all $M \in \overline{V}(\mathfrak{M})$ and $w \in \overline{V}(\mathscr{W})$. Moreover, the topology of  $\mathcal{S}^{\{\mathfrak{M}\}}_{\{\mathscr{W}\}}$ is generated by the system of seminorms $\{ \| \, \cdot \, \|_{\mathcal{S}^{M}_{w,\infty}} \, | \,  M \in \overline{V}(\mathfrak{M}), w \in \overline{V}(\mathscr{W})\}$.
\end{theorem}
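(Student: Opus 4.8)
The plan is to realize $\mathcal{S}^{\{\M\}}_{\{\W\}} = \mathcal{S}^{\{\M\}}_{\{\W\},\infty}$ as a topological subspace of a weighted inductive limit of continuous functions, and then to feed this into the projective description machinery recalled above. Set $X = \N^d \times \R^d$ (with $\N^d$ discrete), a completely regular Hausdorff space, and consider the Nachbin family $\mathscr{U} = \M^\circ \otimes \W = \{ u^\lambda \mid \lambda \in \R_+ \}$ on $X$, where $u^\lambda = (1/M^\lambda) \otimes w^\lambda$, i.e.\ $u^\lambda(\alpha,x) = w^\lambda(x)/M^\lambda_\alpha$. The derivative map $D \colon C^\infty(\R^d) \to C(X)$, $(D\varphi)(\alpha,x) = \varphi^{(\alpha)}(x)$, is a well-defined linear injection (the component $\alpha = 0$ recovers $\varphi$) and satisfies $\| \varphi \|_{\mathcal{S}^{M}_{w,\infty}} = \| D\varphi \|_{(1/M)\otimes w}$ for every weight sequence $M$ and weight function $w$. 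In particular $D$ maps $\mathcal{S}^{M^\lambda}_{w^\lambda,\infty}$ isometrically into the Banach space $Cu^\lambda(X)$, so it induces a continuous linear injection $D \colon \mathcal{S}^{\{\M\}}_{\{\W\}} \to \mathscr{U}C(X)$.

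Next I would verify that $\mathscr{U}$ satisfies $(\operatorname{S})$. Since $\W$ satisfies $\{\operatorname{wM}\}$ and $\{\operatorname{N}\}$, Lemma \ref{lemma-C0} gives that $\W$ satisfies $(\operatorname{S})$; since $\M$ satisfies $\{\operatorname{L}\}$, the family $\M^\circ$ satisfies $(\operatorname{S})$; and $(\operatorname{S})$ is inherited by the tensor product $\mathscr{U} = \M^\circ \otimes \W$. Hence, by the projective description theorem \cite[Corollary 5, p.~116]{Bierstedt}, $\mathscr{U}C(X)$ coincides topologically with its projective hull $C\overline{U}(X)$, where $\overline{U} = \overline{V}(\mathscr{U})$. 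Moreover, the upwards density of $\overline{V}(\M^\circ) \otimes \overline{V}(\W)$ in $\overline{U}$ shows that the seminorm systems $\{ \| \cdot \|_v : v \in \overline{U} \}$ and $\{ \| \cdot \|_{(1/M)\otimes w} : M \in \overline{V}(\M),\, w \in \overline{V}(\W) \}$ are equivalent (for each $v \in \overline{U}$ there are $M, w$ with $v \leq (1/M)\otimes w$, and conversely $(1/M)\otimes w \in \overline{U}$). Under $D$ the latter system is exactly $\{ \| \cdot \|_{\mathcal{S}^{M}_{w,\infty}} \}$. Since $\mathscr{U}C(X)$ and $C\overline{U}(X)$ always coincide as sets, a smooth $\varphi$ satisfies $\|\varphi\|_{\mathcal{S}^{M}_{w,\infty}} < \infty$ for all $M \in \overline{V}(\M)$, $w \in \overline{V}(\W)$ if and only if $D\varphi \in C\overline{U}(X) = \mathscr{U}C(X)$, i.e.\ $\| D\varphi \|_{u^\lambda} < \infty$ for some $\lambda$, i.e.\ $\varphi \in \mathcal{S}^{M^\lambda}_{w^\lambda,\infty} \subseteq \mathcal{S}^{\{\M\}}_{\{\W\}}$. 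This already yields the set-theoretic description.

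For the topological assertion I would show that $D$ identifies $\mathcal{S}^{\{\M\}}_{\{\W\}}$ with a topological subspace of $\mathscr{U}C(X)$, so that the topological identity $\mathscr{U}C(X) = C\overline{U}(X)$ and the equivalence of seminorm systems descend to $\mathcal{S}^{\{\M\}}_{\{\W\}}$. The inclusion ``projective topology $\leq$ inductive limit topology'' is immediate, since for $\varphi \in \mathcal{S}^{M^\lambda}_{w^\lambda,\infty}$ one has $\| \varphi \|_{\mathcal{S}^{M}_{w,\infty}} \leq \bigl( \sup_\alpha M^\lambda_\alpha/M_\alpha \bigr)\bigl( \sup_x w(x)/w^\lambda(x) \bigr) \| \varphi \|_{\mathcal{S}^{M^\lambda}_{w^\lambda,\infty}}$, which makes each $\| \cdot \|_{\mathcal{S}^{M}_{w,\infty}}$ continuous. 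For the reverse inclusion I would use two structural facts: first, $D(\mathcal{S}^{M^\lambda}_{w^\lambda,\infty})$ is closed in $Cu^\lambda(X)$, because a weighted limit $D\varphi_n \to \Phi$ forces locally uniform convergence of all derivatives, whence $\Phi = D\psi$ for some $\psi \in C^\infty(\R^d)$; second, the coherence relation $D(\mathcal{S}^{M^\lambda}_{w^\lambda,\infty}) = D(\mathcal{S}^{\{\M\}}_{\{\W\}}) \cap Cu^\lambda(X)$. Since $\mathcal{S}^{\{\M\}}_{\{\W\}}$ is a regular $(LB)$-space by Lemma \ref{regular} and $\mathscr{U}C(X)$ is regular (indeed boundedly retractive) by virtue of $(\operatorname{S})$, these facts allow one to match bounded sets and $0$-neighbourhoods between the two spectra and conclude that $D$ is a topological embedding.

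The main obstacle is precisely this last step: converting the abstract topological identity $\mathscr{U}C(X) = C\overline{U}(X)$ into a statement about the subspace $\mathcal{S}^{\{\M\}}_{\{\W\}}$. The set-level computation and the identification of the correct Nachbin family $\mathscr{U} = \M^\circ \otimes \W$ are routine once the framework is in place; the delicate point is that a subspace inductive limit need not carry the induced topology, so the regularity of \emph{both} $(LB)$-spaces together with the coherence relation above is essential to rule out a strictly finer inductive limit topology on the image.
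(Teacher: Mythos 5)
Your overall framework is the same as the paper's: the map you call $D$ is the paper's $\iota$ into $(\M^{\circ}\otimes\W)C(\N^{d}\times\R^{d})$, you verify $(\operatorname{S})$ for the tensor Nachbin family, invoke Bierstedt's identity between the weighted inductive limit and its projective hull, and use upward density of $\overline{V}(\M^{\circ})\otimes\overline{V}(\W)$; the set-theoretic half of your argument is correct and matches the paper. The genuine gap is exactly at the point you yourself flag as the main obstacle, and your proposed fix does not close it. From regularity of both $(LB)$-spaces, the stepwise isometry, closedness of $D(\mathcal{S}^{M^{\lambda}}_{w^{\lambda},\infty})$ in $Cu^{\lambda}(X)$, and the coherence relation, what you actually obtain is only this: every subset of $D(\mathcal{S}^{\{\M\}}_{\{\W\}})$ that is bounded in the induced topology is bounded in the inductive limit topology, i.e.\ $D^{-1}$ is a \emph{bounded} linear map on the image equipped with the induced topology. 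To upgrade boundedness to continuity you would need that induced topology to be bornological; but a (even stepwise closed and coherent) subspace of an $(LB)$-space need not be bornological, and regularity of the two spectra does not rule out that the subspace inductive limit topology is strictly finer than the induced one. So in the phrase ``match bounded sets and $0$-neighbourhoods,'' the bounded sets do match, while the $0$-neighbourhoods are precisely the unproven part --- and that is the entire content of the theorem's topological assertion.

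The paper closes this gap with a different and heavier device, and it is instructive to see what it consumes. In Lemma \ref{embedding} it abandons the sup-norm steps (which are non-reflexive, so nothing compact can be extracted from their linking maps) and works instead, for a fixed $q\in(1,\infty)$, with reflexive Banach steps $X_{n}$ (an $\ell^{q}$-sum over $\alpha$ of weighted $L^{q}$-norms) and $Y_{n}$ (the corresponding sequence space); all linking maps are then weakly compact, the sequences $0\to X_{n}\to Y_{n}\to Y_{n}/\rho_{n}(X_{n})\to 0$ are short topologically exact, and the dual Mittag-Leffler theorem \cite[Lemma 1.4]{Komatsu} yields that $\rho=\varinjlim\rho_{n}$ is a topological embedding, whence so is $\iota$ because $\rho=\tau\circ\iota$ with $\tau$ continuous. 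Crucially, this argument also requires $\mathcal{S}^{\{\M\}}_{\{\W\}}=\varinjlim X_{n}$ to be Montel, which the paper obtains from its nuclearity (Theorem \ref{ultra-suff}), i.e.\ from the hypotheses $\{\mathfrak{M}.2\}'$ and $\{\operatorname{N}\}$. Note that your proposal never uses $\{\mathfrak{M}.2\}'$ anywhere; if it could be completed as written, it would prove the projective description under strictly weaker hypotheses than the paper needs for its embedding lemma --- a further sign that the final step, as stated, is a hope rather than a proof.
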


We define
$$
\iota: C^\infty(\R^d) \rightarrow C(\N^d \times \R^d), \qquad \iota(\varphi) = [(\alpha, x) \mapsto \varphi^{(\alpha)}(x)].
$$
The proof of Theorem \ref{projective-description} is based on the ensuing lemma.
\begin{lemma}\label{embedding}
Let $\mathfrak{M}$ be a weight sequence system satisfying $\{\operatorname{L}\}$ and $\{\mathfrak{M}.2\}'$, and let $\mathscr{W}$ be a weight function system satisfying $\{\operatorname{wM}\}$ and $\{\operatorname{N}\}$. Then, $\varphi \in C^\infty(\R^d)$ belongs to  $\mathcal{S}^{\{\mathfrak{M}\}}_{\{\mathscr{W}\}}$ if and only if $\iota(\varphi) \in  (\M^\circ \otimes \mathscr{W}) C(\N^d \times \R^d)$. Moreover, 
$$
\iota: \mathcal{S}^{\{\mathfrak{M}\}}_{\{\mathscr{W}\}} \rightarrow (\M^\circ \otimes \mathscr{W}) C(\N^d \times \R^d)
$$
is a topological embedding.
\end{lemma}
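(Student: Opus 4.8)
The plan is to exhibit $\iota$ as an inductive limit of isometries and then to upgrade continuity to a topological embedding using the regularity of the target $(LB)$-space. First I would record the basic isometry. Writing $v^\lambda = (1/M^\lambda) \otimes w^\lambda$, so that $v^\lambda(\alpha,x) = w^\lambda(x)/M^\lambda_\alpha$, a direct computation gives
\[
\| \iota(\varphi) \|_{v^\lambda} = \sup_{(\alpha,x) \in \N^d \times \R^d} |\varphi^{(\alpha)}(x)| \frac{w^\lambda(x)}{M^\lambda_\alpha} = \| \varphi \|_{\mathcal{S}^{M^\lambda}_{w^\lambda,\infty}}, \qquad \varphi \in C^\infty(\R^d),\ \lambda > 0.
\]
Hence $\iota$ restricts to an isometry of $\mathcal{S}^{M^\lambda}_{w^\lambda,\infty}$ into $C v^\lambda(\N^d \times \R^d)$ for each $\lambda$. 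Since $(\M^\circ \otimes \mathscr{W})C(\N^d \times \R^d)$ is, as a set, the union $\bigcup_{\lambda} C v^\lambda(\N^d \times \R^d)$, this identity yields at once the asserted membership equivalence $\varphi \in \mathcal{S}^{\{\mathfrak{M}\}}_{\{\mathscr{W}\}} \Leftrightarrow \iota(\varphi) \in (\M^\circ \otimes \mathscr{W})C(\N^d \times \R^d)$. As the stepwise isometries are compatible with the linking maps, the universal property of the inductive limit makes $\iota \colon \mathcal{S}^{\{\mathfrak{M}\}}_{\{\mathscr{W}\}} \to (\M^\circ \otimes \mathscr{W})C(\N^d \times \R^d)$ continuous.

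Next I would check that the range is closed. Because $w^\lambda \geq 1$, each $v^\lambda$ is bounded below on every set $\{\alpha\} \times K$ with $K \subset \R^d$ compact, so the inclusion $Cv^\lambda(\N^d \times \R^d) \hookrightarrow C(\N^d \times \R^d)$ into the compact-open topology is continuous for each $\lambda$, and hence so is the map out of the inductive limit. Convergence in $(\M^\circ \otimes \mathscr{W})C(\N^d \times \R^d)$ therefore forces, for each fixed $\alpha$, locally uniform convergence of the $\alpha$-th derivatives. If $\iota(\varphi_\gamma) \to f$ in the target, standard facts on locally uniform convergence of derivatives give $f = \iota(\psi)$ with $\psi = f(0,\cdot) \in C^\infty(\R^d)$, and the membership equivalence forces $\psi \in \mathcal{S}^{\{\mathfrak{M}\}}_{\{\mathscr{W}\}}$; thus the range is closed. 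The same argument at a single step shows the saturation property $\iota(\mathcal{S}^{\{\mathfrak{M}\}}_{\{\mathscr{W}\}}) \cap Cv^\lambda(\N^d \times \R^d) = \iota(\mathcal{S}^{M^\lambda}_{w^\lambda,\infty})$ for every $\lambda$.

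It remains to prove that $\iota^{-1}$ is continuous on the range; equivalently, that on $G := \iota(\mathcal{S}^{\{\mathfrak{M}\}}_{\{\mathscr{W}\}})$ the topology induced from $(\M^\circ \otimes \mathscr{W})C(\N^d \times \R^d)$ agrees with the inductive limit topology of the spectrum $(\iota(\mathcal{S}^{M^\lambda}_{w^\lambda,\infty}))_\lambda$, the induced topology being a priori the coarser one. Here I would use regularity. By tensor-product stability, $\M^\circ \otimes \mathscr{W}$ satisfies $(\operatorname{S})$: indeed $\M^\circ$ does because $\mathfrak{M}$ satisfies $\{\operatorname{L}\}$, and $\mathscr{W}$ does because of $\{\operatorname{wM}\}$ and $\{\operatorname{N}\}$ (Lemma \ref{lemma-C0}). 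Hence, by the theory of weighted inductive limits of continuous functions, $(\M^\circ \otimes \mathscr{W})C(\N^d \times \R^d)$ is a regular $(LB)$-space coinciding topologically with its projective hull. Regularity localises each bounded subset of $G$ into a single step $Cv^\lambda$, where by the saturation property it is in fact bounded in $\iota(\mathcal{S}^{M^\lambda}_{w^\lambda,\infty})$; combined with the regularity of $\mathcal{S}^{\{\mathfrak{M}\}}_{\{\mathscr{W}\}}$ from Lemma \ref{regular}, this shows that the two topologies on $G$ have the same bounded sets, and the bounded retractivity accompanying $(\operatorname{S})$ then promotes this to the coincidence of the topologies themselves.

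The main obstacle is precisely this last step: the isometry, the membership equivalence, the continuity of $\iota$ and the closedness of the range are all routine, but turning the easy bornological identification into a genuine topological one is the real work. This is where the hypotheses $\{\operatorname{L}\}$, $\{\operatorname{wM}\}$ and $\{\operatorname{N}\}$ are indispensable, since they secure condition $(\operatorname{S})$ and hence the regularity of the target that makes bounded sets, and with them the topology, localise to the Banach steps.
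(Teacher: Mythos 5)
Your first three steps (the stepwise isometry, the membership equivalence, the continuity of $\iota$, and the closedness of the range) are fine and correspond to the part the paper dismisses as obvious. The genuine gap is in your final step. From regularity of the target and the saturation property you correctly conclude that the induced topology $\tau_{\mathrm{sub}}$ on $G=\iota(\mathcal{S}^{\{\mathfrak{M}\}}_{\{\mathscr{W}\}})$ and the inductive-limit topology $\tau_{\mathrm{ind}}$ transported by $\iota$ have the same bounded sets, and bounded retractivity would even give that they agree on each bounded set. But neither fact implies that the topologies coincide: two comparable locally convex topologies with the same bornology are equal only if the \emph{coarser} one is bornological. Here $\tau_{\mathrm{ind}}$ is an $(LB)$-space topology, hence bornological, so it is precisely the bornologification of $\tau_{\mathrm{sub}}$; the whole question is whether $(G,\tau_{\mathrm{sub}})$ is itself bornological, and that is not automatic --- a closed subspace of a regular, even complete and boundedly retractive, $(LB)$-space need not be bornological or barrelled, and need not carry the inductive-limit topology of its traces on the steps (the ``well-located subspace'' problem). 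The weak and norm topologies on an infinite-dimensional Banach space already show that equal bornologies do not force equal topologies, so the sentence ``the bounded retractivity accompanying $(\operatorname{S})$ then promotes this to the coincidence of the topologies themselves'' assumes exactly the point that has to be proved.

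The paper closes this gap with an argument of a different nature. It replaces the sup-norm steps by reflexive Banach spaces $X_n$, $Y_n$ of $\ell^q$-$L^q$ type for a fixed $q\in(1,\infty)$ (condition $\{\operatorname{L}\}$ guarantees $\varinjlim_n X_n = \mathcal{S}^{\{\mathfrak{M}\}}_{\{\mathscr{W}\}}$ topologically), so that all linking maps of the spectra $(X_n)$, $(Y_n)$ and of the quotient spectrum $Z_n=Y_n/\rho_n(X_n)$ are weakly compact and these spectra are regular; it then applies the dual Mittag-Leffler theorem \cite[Lemma 1.4]{Komatsu} to the resulting inductive sequence of short topologically exact sequences. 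The hypotheses of that theorem also require $\mathcal{S}^{\{\mathfrak{M}\}}_{\{\mathscr{W}\}}$ to be Montel, which the paper obtains from its nuclearity (Theorem \ref{ultra-suff}); so the full strength of $\{\operatorname{L}\}$, $\{\mathfrak{M}.2\}'$, $\{\operatorname{wM}\}$, $\{\operatorname{N}\}$ enters here, not merely through condition $(\operatorname{S})$. The conclusion is that $\rho=\varinjlim_n\rho_n$ is a topological embedding, and since $\rho=\tau\circ\iota$ with $\tau$ continuous, so is $\iota$. Some input of this depth --- or an explicit proof that $(G,\tau_{\mathrm{sub}})$ is bornological --- is indispensable; your outline stops exactly where it is needed.
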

\begin{proof}
The first part and the fact that $\iota$ is continuous are obvious. We now show that $\iota$ is a topological embedding. Fix an arbirtrary $q \in (1,\infty)$. For $n \in \Z_+$ we write $X_n$ for the Banach space consisting of all $\varphi \in C^\infty(\R^d)$ such that 
$$
\| \varphi \|_{X_n} = \left( \sum_{\alpha \in \N^d} \left( \frac{\| \varphi^{(\alpha)} w^n\|_{L^q} }{M^n_\alpha} \right)^q \right)^{1/q} < \infty
$$
and $Y_n$ for the Banach space consisting of all sequences $(\varphi_{\alpha})_{\alpha \in \N^d}$ of measurable functions such that
$$
\| (\varphi_{\alpha})_{\alpha \in \N^d} \|_{Y_n} = \left( \sum_{\alpha \in \N^d} \left( \frac{\| \varphi_{\alpha} w^n\|_{L^q} }{M^n_\alpha} \right)^q \right)^{1/q} < \infty.
$$
Note that both $X_n$ and $Y_n$ are reflexive. The mapping $\rho_n: X_n \rightarrow Y_n, \varphi \mapsto (\varphi^{(\alpha)})_{\alpha \in \N^d}$ is a topological embedding. Set $X = \varinjlim_{n \in \Z_+} X_n$ and $Y = \varinjlim_{n \in \Z_+} Y_n$.  Condition $\{\operatorname{L}\}$ implies that $X =  \mathcal{S}^{\{\mathfrak{M}\}}_{\{\mathscr{W}\},q} = \mathcal{S}^{\{\mathfrak{M}\}}_{\{\mathscr{W}\}}$ as locally convex spaces. We claim that $\rho = \varinjlim_{n \in \Z_+} \rho_n : X = \mathcal{S}^{\{\mathfrak{M}\}}_{\{\mathscr{W}\}}   \rightarrow Y$ is a topological embedding. Before we prove the claim, let us show how it entails the result. Condition $\{L\}$ and Lemma \ref{lemma-C0} imply that the mapping
$$
\tau: (\M^\circ \otimes \mathscr{W}) C(\N^d \times \R^d) \rightarrow  Y, f \mapsto (f(\alpha, \, \cdot \,))_{\alpha \in \N^d}
$$
is well-defined and continuous. Note that $\rho = \tau \circ \iota$. Hence, $\iota$ is a topological embedding because $\rho$ is so. We now show the claim with the aid of the dual Mittag-Leffler theorem \cite[Lemma 1.4]{Komatsu}. For $n \in \Z_+$ we set $Z_n = Y_n / \rho_n(X_n)$. Hence, $Z_n$ is a reflexive Banach space.  We denote by $\pi_n : Y_n \rightarrow Z_n$ the quotient mapping. The natural linking mappings $Z_n \rightarrow Z_{n+1}$ are injective since
$\rho_{n+1}(X_{n+1}) \cap Y_n = \rho_{n}(X_n)$. Consider the following injective inductive sequence of short topologically exact sequences
\begin{center}
\begin{tikzpicture}
  \matrix (m) [matrix of math nodes, row sep=2em, column sep=2em]
    {
    0 & X_1 & Y_1  & Z_1 & 0 \\
   0 & X_2 & Y_2  & Z_2 & 0  \\ 
        & \vdots & \vdots & \vdots &  \\ };
  
  { [start chain] \chainin (m-1-1);
\chainin (m-1-2);
\chainin (m-1-3)[join={node[above,labeled] {\rho_1}}];
\chainin (m-1-4)[join={node[above,labeled] {\pi_1}}];
\chainin (m-1-5); }
  
  { [start chain] \chainin (m-2-1);
\chainin (m-2-2);
\chainin (m-2-3)[join={node[above,labeled] {\rho_2}}];
\chainin (m-2-4)[join={node[above,labeled] {\pi_2}}];
\chainin (m-2-5); }

{ [start chain] \chainin (m-1-2);
\chainin (m-2-2);
\chainin (m-3-2);
 }

{ [start chain] \chainin (m-1-3);
\chainin (m-2-3);
\chainin (m-3-3);
 }

{ [start chain] \chainin (m-1-4);
\chainin (m-2-4);
\chainin (m-3-4);
 }
\end{tikzpicture}
\end{center}
The linking mappings of the inductive spectra $(X_n)_{n \in \Z_+}$, $(Y_n)_{n \in \Z_+}$ and $(Z_n)_{n \in \Z_+}$ are weakly compact as continuous linear mappings between reflexive Banach spaces. In particular, these inductive spectra are regular. Furthermore, $\varinjlim_{n \in \Z_+} X_n = X = \mathcal{S}^{\{\mathfrak{M}\}}_{\{\mathscr{W}\}}$ is Montel since it is a nuclear $(DF)$-space. Therefore, the dual Mittag-Lefller theorem \cite[Lemma 1.4]{Komatsu} yields that $\rho = \varinjlim_{n \in \Z_+} \rho_n$ is a topological embedding.
\end{proof}

\begin{proof}[Proof of Theorem \ref{projective-description}] We write $E$ for the space consisting of all $\varphi \in C^\infty(\R^d)$ such that $\| \varphi \|_{\mathcal{S}^{M}_{w,\infty}} < \infty$ for all $M \in \overline{V}(\mathfrak{M})$ and $w \in \overline{V}(\mathscr{W})$ endowed with the locally convex topology generated by the  system of seminorms $\{ \| \, \cdot \, \|_{\mathcal{S}^{M}_{w,\infty}} \, | \,  M \in \overline{V}(\mathfrak{M}), w \in \overline{V}(\mathscr{W})\}$. We need to show that 
$\mathcal{S}^{\{\mathfrak{M}\}}_{\{\mathscr{W}\}}$ and $E$ coincide as locally convex spaces. Since $\overline{V}(\M^\circ) \otimes \overline{V}(\mathscr{W})$ is upward dense in $\overline{V}(\M^\circ \otimes \mathscr{W})$, we have that $\varphi \in C^\infty(\R^d)$ belongs to  $E$ if and only if $\iota(\varphi) \in C\overline{V}(\M^\circ \otimes \mathscr{W})(\R^d)$ and that  
$\iota: E \rightarrow C\overline{V}(\M^\circ \otimes \mathscr{W})(\R^d)$ is a topological embedding. As both the Nachbin families $\M^\circ$ and $\W$ satisfy $(\operatorname{S})$,  $\M^\circ \otimes \mathscr{W}$ does so as well. Hence, $(\M^\circ \otimes \mathscr{W}) C(\N^d \times \R^d) = C\overline{V}(\M^\circ \otimes \mathscr{W})(\R^d)$ as locally convex spaces. The result now follows from Lemma \ref{embedding}.
\end{proof}

\section{Kernel theorems}\label{section kernel}
We prove  kernel theorems for the spaces $\mathcal{S}^{[\mathfrak{M}]}_{[\mathscr{W}]}$  in this section. To this end, we introduce vector-valued versions of   $\mathcal{S}^{[\mathfrak{M}]}_{[\mathscr{W}]}$ and give a tensor product representation for them. 

We start by briefly recalling some notions about the $\varepsilon$-product and tensor products \cite{Schwartz-v, Komatsu3}.  Given two lcHs  $E$ and $F$, we denote by $\mathcal{L}(E, F)$ the space consisting of all continuous linear mappings from $E$ into $F$. 
The $\varepsilon$-product of $E$ and $F$ is defined as $E \varepsilon F = \mathcal{L}(F'_c, E)$ endowed with the topology of uniform convergence over the equicontinuous subsets of $F'$,  where the subscript $c$ indicates that we endow $F'$ with the topology of uniform convergence on balanced convex compact subsets of $F$. The spaces $E \varepsilon F$ and $F \varepsilon E$ are canonically isomorphic as locally convex spaces \cite[p.\ 657]{Komatsu3}.
   If $F$ is Montel, then $E \varepsilon F = \mathcal{L}_\beta(F'_\beta, E)$. We write 
   $E \otimes_\varepsilon F$ and $E \otimes_\pi F$ to indicate that we endow the  tensor product $E \otimes F$ with the $\varepsilon$-topology and the projective topology, respectively. If either $E$ or $F$ is nuclear, we have that $E \otimes_\varepsilon F = E \otimes_\pi F$ and we drop the subscripts $\varepsilon$ and $\pi$ in the  notation. The tensor product $E \otimes F$ is canonically embedded into $E\varepsilon F$ and the induced topology on $E \otimes F$ is the $\varepsilon$-topology.  If $E$ and $F$ are complete and if either $E$ or $F$ has the weak approximation property (in particular, if either $E$ or $F$ is nuclear), then $E \varepsilon F$ and  $E \widehat{\otimes}_\varepsilon F$ are canonically isomorphic as locally convex spaces  \cite[Prop.\ 1.4]{Komatsu3}.

Let us now introduce vector-valued Gelfand-Shilov spaces. Let $\mathfrak{M}$ be a weight sequence system, let $\mathscr{W}$ be a weight function system and let $E$ be a lcHs. We define $\mathcal{S}^{[\mathfrak{M}]}_{[\mathscr{W}], \infty}(\R^d;E) = \mathcal{S}^{[\mathfrak{M}]}_{[\mathscr{W}]}(\R^d;E)$ as the space consisting of all ${\bm{\varphi}} \in C^\infty(\R^d;E)$ such that for all $p \in \csn(E)$ and $\lambda \in \R_+$ (for all $p \in \csn(E)$, $M \in \overline{V}(\M)$ and $w \in \overline{V}(\W)$)
\begin{gather*}
p_\lambda({\bm{\varphi}}) = \sup_{\alpha \in \N^d} \sup_{x \in \R^d} \frac{p({\bm{\varphi}}^{(\alpha)}(x))w^\lambda(x)}{M^\lambda_{\alpha}} < \infty \\
\left ( p_{M,w}({\bm{\varphi}}) = \sup_{\alpha \in \N^d} \sup_{x \in \R^d} \frac{p({\bm{\varphi}}^{(\alpha)}(x))w(x)}{M_{\alpha}}
 < \infty \right).
 \end{gather*}
 We endow $\mathcal{S}^{[\mathfrak{M}]}_{[\mathscr{W}]}(\R^d;E)$ with the locally convex topology generated by the system of seminorms $\{ p_\lambda \, | \, p \in \csn(E), \lambda \in \R_+ \}$ ($\{ p_{M,w} \, | \, p \in \csn(E),  M \in \overline{V}(\M), w \in \overline{V}(\W) \}$).
 
\begin{proposition}\label{charvv}
Let $\mathfrak{M}$ be a weight sequence system satisfying $[\operatorname{L}]$ and $[\mathfrak{M}.2]'$, let $\mathscr{W}$ be a weight function system satisfying $[\operatorname{wM}]$ and $[\operatorname{N}]$, and let $E$ be a complete lcHs. Then, the following canonical isomorphisms of locally convex spaces hold
$$
\mathcal{S}^{[\mathfrak{M}]}_{[\mathscr{W}]}(\R^d;E)  \cong \mathcal{S}^{[\mathfrak{M}]}_{[\mathscr{W}]}(\R^d) \varepsilon E \cong \mathcal{S}^{[\mathfrak{M}]}_{[\mathscr{W}]}(\R^d) \widehat{\otimes}  E. 
$$
\end{proposition}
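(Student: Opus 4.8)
The plan is to establish the two isomorphisms separately, deriving the second from nuclearity and concentrating the real work on the first. Write $S = \mathcal{S}^{[\mathfrak{M}]}_{[\mathscr{W}]}(\R^d)$. By Theorem \ref{ultra-suff} the hypotheses force $S$ to be nuclear, and $S$ is complete (a Fr\'echet space in the Beurling case; a nuclear, hence Montel and thus quasi-complete, $(DF)$-space in the Roumieu case, and a quasi-complete $(DF)$-space is complete). Since $E$ is complete and nuclearity endows $S$ with the approximation property, the recalled properties of the $\varepsilon$-product (in particular $\widehat{\otimes}_\varepsilon = \widehat{\otimes}_\pi = \widehat{\otimes}$ together with \cite[Prop.\ 1.4]{Komatsu3}) immediately yield $S \varepsilon E \cong S \widehat{\otimes} E$. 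It therefore remains to prove $\mathcal{S}^{[\mathfrak{M}]}_{[\mathscr{W}]}(\R^d;E) \cong S \varepsilon E$.

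For this first isomorphism I would use the identification $S \varepsilon E = \mathcal{L}(E'_c, S)$ and consider the natural map $J(\bm{\varphi})(e') = \langle e', \bm{\varphi}(\cdot) \rangle$, sending $\bm{\varphi} \in \mathcal{S}^{[\mathfrak{M}]}_{[\mathscr{W}]}(\R^d;E)$ to the scalar function $x \mapsto \langle e', \bm{\varphi}(x)\rangle$. That $J(\bm{\varphi})(e') \in S$ is clear, since $|\langle e', \cdot \rangle| \in \csn(E)$ and hence each defining seminorm of $S$ applied to $J(\bm{\varphi})(e')$ is dominated by the corresponding seminorm of $\bm{\varphi}$. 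The point requiring the hypotheses is that $J(\bm{\varphi})$ be continuous from $E'_c$ into $S$: at a fixed level this amounts to the range $\{ \bm{\varphi}^{(\alpha)}(x) w^\lambda(x) / M^\lambda_\alpha : \alpha \in \N^d, x \in \R^d \}$ being relatively compact in $E$. Using $[\operatorname{L}]$, $[\mathfrak{M}.2]'$, $[\operatorname{wM}]$ and $[\operatorname{N}]$ (equivalently, that $\M^\circ \otimes \mathscr{W}$ satisfies condition $(\operatorname{S})$, cf.\ Lemma \ref{lemma-C0} and Theorem \ref{projective-description}), one shows that this weighted family vanishes at infinity on $\N^d \times \R^d$; being the continuous image of the one-point compactification it is then relatively compact. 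This makes $J$ a continuous injection into $S \varepsilon E$.

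The crux is surjectivity together with openness of $J$. Given $\Phi \in \mathcal{L}(E'_c, S)$, the continuity of the derivative point-evaluations $\varphi \mapsto \varphi^{(\alpha)}(x)$ on $S$ shows that $e' \mapsto (\Phi e')^{(\alpha)}(x)$ is continuous on $E'_c$; since $E$ is complete, Grothendieck's completeness theorem gives $(E'_c)' = E$, so this functional is represented by a unique vector $\bm{\varphi}^{(\alpha)}(x) \in E$ with $\langle e', \bm{\varphi}^{(\alpha)}(x)\rangle = (\Phi e')^{(\alpha)}(x)$. The main obstacle is to promote this scalarly defined family to an honest element of $\mathcal{S}^{[\mathfrak{M}]}_{[\mathscr{W}]}(\R^d;E)$: one must verify that $\bm{\varphi} := \bm{\varphi}^{(0)}$ is $C^\infty$ as an $E$-valued map with $\partial^\alpha \bm{\varphi} = \bm{\varphi}^{(\alpha)}$, which is a weak-to-strong differentiability statement resting on completeness of $E$ and on relative compactness of the difference-quotient ranges, and that the seminorms $p_\lambda(\bm{\varphi})$ (resp.\ $p_{M,w}(\bm{\varphi})$) are finite, which follows from $\Phi$ mapping equicontinuous subsets of $E'$ to bounded subsets of $S$. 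Matching the seminorms in both directions then shows $J$ is a topological isomorphism.

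I expect the genuinely delicate point to be this regularity step in the Roumieu case, where $S$ is an $(LB)$-space and one cannot argue level by level without care. Here I would lean on the structural route behind Theorem \ref{projective-description}: the embedding $\iota$ of Lemma \ref{embedding} realizes $S$ as a topological subspace of the weighted space $(\M^\circ \otimes \mathscr{W})C(\N^d \times \R^d)$, for which the vector-valued identification $(\M^\circ \otimes \mathscr{W})C(\N^d \times \R^d)\, \varepsilon\, E \cong (\M^\circ \otimes \mathscr{W})C(\N^d \times \R^d; E)$ is available \cite{Bierstedt}. Since the $\varepsilon$-product preserves topological embeddings into complete spaces and commutes with the projective limit occurring in the Beurling case, applying $(\,\cdot\,)\varepsilon E$ to $\iota$ embeds $S \varepsilon E$ into the vector-valued weighted space, and its image is cut out by the same compatibility relations $G(\alpha + e_j, \cdot) = \partial_j G(\alpha, \cdot)$ that characterize the image of the vector-valued embedding $\iota_E : \bm{\varphi} \mapsto [(\alpha,x) \mapsto \bm{\varphi}^{(\alpha)}(x)]$; these relations are detected scalarly because $\langle e', \Phi(\cdot)\rangle \in \iota(S)$ for every $e'$. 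The regularity of the Roumieu inductive limit (Lemma \ref{regular}) and its Montel, nuclear $(DF)$ structure are precisely what guarantee that $\varepsilon$ and the inductive limit interact correctly throughout.
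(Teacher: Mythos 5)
Your overall architecture coincides with the paper's: the second isomorphism is obtained exactly as you do it (completeness and nuclearity of $\mathcal{S}^{[\mathfrak{M}]}_{[\mathscr{W}]}$ plus \cite[Prop.\ 1.4]{Komatsu3}), and the first isomorphism is realized by the same canonical map $\bm{\varphi}\mapsto[e'\mapsto\langle e',\bm{\varphi}\rangle]$, with the projective description (Theorem \ref{projective-description}) carrying the Roumieu case and a polar argument matching seminorms. The genuine gap is that the pivotal analytic step --- promoting the scalarly defined data to an honest element of $\mathcal{S}^{[\mathfrak{M}]}_{[\mathscr{W}]}(\R^d;E)$ --- is described but never discharged, in either of your two routes. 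In the direct route you say the weak-to-strong $C^\infty$ statement ``rests on completeness of $E$ and relative compactness of the difference-quotient ranges''; in the structural route you need exactly the same fact to see that the compatibility relations $G(\alpha+e_j,\cdot)=\partial_j G(\alpha,\cdot)$, verified only after pairing with each $e'\in E'$, yield a genuine $E$-valued smooth function. The paper closes precisely this point by quoting two classical results: Komatsu's criterion \cite[Lemma 1.12]{Komatsu3} (Lemma \ref{criteriumKomatsu}), which for a semi-Montel $G$ continuously included in $C(\R^d)$ and complete $E$ identifies $G\varepsilon E$ with the functions in $C(\R^d;E)$ whose scalar traces lie in $G$, and Schwartz's \cite[Appendice Lemme II]{Schwartz-v} for weak-to-strong smoothness; the required bounds then follow from Theorem \ref{projective-description} together with Mackey's theorem, and the topological identity from the bipolar theorem. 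Without either executing your difference-quotient argument or invoking such results, the proof is incomplete at its crux.

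A second, smaller but real, issue is your forward-direction compactness argument in the Roumieu case. Continuity of $J(\bm{\varphi}):E'_c\to \mathcal{S}^{\{\mathfrak{M}\}}_{\{\mathscr{W}\}}$ cannot be tested ``at a fixed level'' of the inductive limit; it must be tested against the seminorms $\|\cdot\|_{\mathcal{S}^{M}_{w,\infty}}$ with $M\in\overline{V}(\mathfrak{M})$, $w\in\overline{V}(\mathscr{W})$ furnished by Theorem \ref{projective-description}, and your vanishing-at-infinity argument then requires an $(\operatorname{S})$-type property of the \emph{maximal} Nachbin family, not merely of $\M^\circ\otimes\mathscr{W}$ itself: given $M,w$ in the maximal families one must produce $M',w'$ in them with $M'_\alpha w(x)/(M_\alpha w'(x))$ vanishing at infinity. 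This is provable (for instance $(2^{-|\alpha|}M_\alpha)_{\alpha}\in\overline{V}(\mathfrak{M})$ by $\{\operatorname{L}\}$ handles the sequence part, and an analogous construction works for $w$), but you neither state nor prove it. Note also that the paper's route avoids any compactness considerations in $E$ altogether, since Komatsu's lemma only needs $\mathcal{S}^{[\mathfrak{M}]}_{[\mathscr{W}]}$ to be semi-Montel (automatic from nuclearity); this is one reason the quoted criterion is the more economical path than re-deriving the $\varepsilon$-product mechanics by hand.
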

We will make use of the ensuing result of Komatsu \cite{Komatsu3} to show Proposition \ref{charvv}.
\begin{lemma}[{\cite[Lemma 1.12]{Komatsu3}}] \label{criteriumKomatsu} Let $G$ be a semi-Montel lcHs such that $G$ is continuously included in $C(\R^d)$ and let $E$ be a complete lcHs. Then, every function  ${\bm{\varphi}} \in C(\R^d; E)$ satisfying 
\begin{equation}
\langle e', {\bm{\varphi}} \rangle : \R^d \rightarrow \C, \ x \mapsto \langle e', {\bm{\varphi}}(x) \rangle \mbox{ belongs to $G$ for all $e' \in E'$}
\label{property-K}
\end{equation}
defines an element of  $G \varepsilon E$ via $E'_c \rightarrow G,$ $e' \mapsto  \langle e', {\bm{\varphi}} \rangle$. Conversely, for every $T \in  G \varepsilon E$ there is a unique ${\bm{\varphi}} \in  C(\R^d; E)$ satisfying \eqref{property-K} such that $T(e') = \langle e', {\bm{\varphi}} \rangle$ for all $e' \in E'$.
\end{lemma}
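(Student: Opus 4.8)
The plan is to establish the asserted bijection ${\bm\varphi}\leftrightarrow T$ by constructing the two directions separately and then checking that they are mutually inverse. Throughout I would use that, by the paper's convention, $G\varepsilon E=\mathcal L(E'_c,G)$, together with two standard facts: since $E$ is complete, Grothendieck's completeness theorem gives $(E'_c)'=E$ (a linear functional on $E'$ lies in $E$ iff its restriction to every equicontinuous set is weak-$\ast$ continuous); and, since $G$ is continuously included in $C(\R^d)$, every evaluation $\delta_x$ is continuous on $G$, i.e.\ $\delta_x\in G'$.

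For the converse direction, given $T\in\mathcal L(E'_c,G)$ and $x\in\R^d$, the functional $\delta_x\circ T:E'_c\to\C$, $e'\mapsto T(e')(x)$, is continuous, hence $\delta_x\circ T\in(E'_c)'=E$; I would call this element ${\bm\varphi}(x)$, so that $\langle e',{\bm\varphi}(x)\rangle=T(e')(x)$. Then $\langle e',{\bm\varphi}\rangle=T(e')\in G$ for every $e'$, which is exactly \eqref{property-K}, and $T(e')\in C(\R^d)$ shows that ${\bm\varphi}$ is scalarly continuous. To upgrade this to continuity of ${\bm\varphi}:\R^d\to E$, fix a $0$-neighbourhood $U$ in $E$; its polar $U^\circ$ is equicontinuous, hence compact in $E'_c$, so $T(U^\circ)$ is compact in $C(\R^d)$ and therefore, by Ascoli's theorem, an equicontinuous family of functions on $\R^d$. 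Since the gauge $p_U$ of $U$ satisfies $p_U(y)=\sup_{e'\in U^\circ}|\langle e',y\rangle|$, as $x\to x_0$ we obtain $p_U({\bm\varphi}(x)-{\bm\varphi}(x_0))=\sup_{e'\in U^\circ}|T(e')(x)-T(e')(x_0)|\to0$, which is the required continuity. Uniqueness of ${\bm\varphi}$ is immediate since $E'$ separates the points of $E$.

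For the forward direction, given ${\bm\varphi}$ as in \eqref{property-K}, I would set $T(e')=\langle e',{\bm\varphi}\rangle$; this is linear and lands in $G$ by hypothesis. First I would verify that $T:E'_c\to C(\R^d)$ is continuous: for compact $L\subseteq\R^d$ the set ${\bm\varphi}(L)$ is compact in $E$, so its closed absolutely convex hull $K$ is again compact (completeness of $E$), and $e'\in K^\circ$ forces $\sup_{x\in L}|\langle e',{\bm\varphi}(x)\rangle|\le1$; thus $T$ maps the $0$-neighbourhood $K^\circ$ into the unit ball of $C(L)$. To promote this to continuity into $G$ I would invoke the dual criterion: $T$ is continuous iff its transpose $T^t$ carries equicontinuous subsets of $G'$ into equicontinuous — equivalently relatively compact — subsets of $(E'_c)'=E$. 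On a compactly supported Radon measure $\mu$ one has $T^t(\mu)=\int_{\R^d}{\bm\varphi}\,d\mu\in E$ (this $E$-valued integral exists by continuity of ${\bm\varphi}$ and completeness of $E$), and such measures are weak-$\ast$ dense in $G'$ precisely because $G\hookrightarrow C(\R^d)$ is injective.

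The crux — and the main obstacle — is to show that for every $0$-neighbourhood $V$ in $G$ the image $T^t(V^\circ)$ is relatively compact in $E$. I would try to see this by proving that $T^t$ restricts to a continuous map $(V^\circ,\sigma(G',G))\to E$: for a continuous seminorm $p$ on $E$ with unit ball $U$ one has $p(T^t g'-T^t h')=\sup_{e'\in U^\circ}|\langle g'-h',T(e')\rangle|$, so it is enough that an equicontinuous net in $V^\circ$ converging weak-$\ast$ converge uniformly on $T(U^\circ)$, and equicontinuous families do converge uniformly on precompact subsets of $G$. The genuinely delicate step, and exactly where the semi-Montel hypothesis on $G$ is indispensable, is that $T(U^\circ)$ must be precompact in $G$: it is only manifestly compact in the coarser space $C(\R^d)$, and this has to be upgraded to precompactness in $G$. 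I would handle this by first showing $T(U^\circ)$ is weakly bounded in $G$ — using that each $g'\circ T$ is weak-$\ast$ continuous on the compact set $U^\circ$, via the measure representation and density above — and then invoking semi-Montel to convert boundedness into relative compactness. Once $T^t(V^\circ)$ is relatively compact, the dual criterion yields $T\in\mathcal L(E'_c,G)=G\varepsilon E$, and the two constructions are inverse to one another through the defining relation $\langle e',{\bm\varphi}(x)\rangle=T(e')(x)$, which completes the proof.
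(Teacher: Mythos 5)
The paper itself gives no proof of this lemma --- it is quoted verbatim from Komatsu \cite[Lemma 1.12]{Komatsu3} --- so your attempt must be judged on its own. Your converse direction is correct and essentially complete: defining ${\bm{\varphi}}(x)=\delta_x\circ T\in (E'_c)'=E$ (though note that $(E'_c)'=E$ is Mackey--Arens, valid for any lcHs, rather than Grothendieck's completeness theorem), and upgrading scalar continuity to continuity of ${\bm{\varphi}}$ via Ascoli applied to the compact set $T(U^\circ)\subseteq C(\R^d)$, is exactly right. In the forward direction your reductions are also sound: continuity of $T$ into $C(\R^d)$ via the compact disk generated by ${\bm{\varphi}}(L)$, the polar criterion (weak continuity of $T$ plus relative compactness of $T^t(V^\circ)$ in $E$ imply continuity $E'_c\to G$), and the further reduction of the latter to precompactness of $T(U^\circ)$ in $G$.

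However, at the step you yourself call the crux --- the only place where real work happens --- your argument is circular. To know that $g'\circ T$ is $\sigma(E',E)$-continuous on $U^\circ$ for \emph{arbitrary} $g'\in G'$ (which you need twice: for the weak continuity $T^t(G')\subseteq E$, since by Grothendieck's completeness criterion membership in $E$ is exactly continuity on equicontinuous sets, and for the weak boundedness of $T(U^\circ)$), you propose to approximate $g'$ by compactly supported measures. But weak-$*$ density yields only pointwise convergence $\mu_i\circ T\to g'\circ T$ on $E'$; to preserve continuity on $U^\circ$ you need $\mu_i\to g'$ uniformly on $T(U^\circ)$. For an equicontinuous net this is precisely the principle ``weak-$*$ convergent equicontinuous nets converge uniformly on precompact sets'' applied to $T(U^\circ)$ --- whose precompactness in $G$ is the very statement being proved; moreover, weak-$*$ density of the measures in $G'$ does not even provide an approximating net inside a fixed equicontinuous set $V^\circ$, since density of a subspace does not localize to polars in general. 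The gap is genuine and cannot be closed by soft compactness transfer: a set compact in $C(\R^d)$ and contained in $G$ need \emph{not} be bounded in $G$, even for $G$ Fr\'echet--Montel --- take $G=C^\infty(\R)$ and $f_n(x)=n^{-1}\sin(nx)$, which tends to $0$ in $C(\R)$ while $\sup_{[-1,1]}|f_n''|=n$. Hence boundedness of $T(U^\circ)$ in $G$ must exploit that $T$ is linear and defined on \emph{all} of $E'$ with values in $G$ (a closed-graph/gliding-hump type argument on the Banach space spanned by the disk $U^\circ$, in the spirit of Komatsu's original proof), and this idea is absent from your sketch.
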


\begin{proof}[Proof of Proposition \ref{charvv}] We only show the Roumieu case as the Beurling case is similar. The second isomorphism follows from the fact that $\mathcal{S}^{\{\mathfrak{M}\}}_{\{\mathscr{W}\}}(\R^d)$ is complete and nuclear (recall that every nuclear $(DF)$-space is complete). We now show the first isomorphism. This amounts to showing that the mapping
\begin{equation}
\mathcal{S}^{\{\mathfrak{M}\}}_{\{\mathscr{W}\}}(\R^d;E) \rightarrow \mathcal{S}^{\{\mathfrak{M}\}}_{\{\mathscr{W}\}}(\R^d) \varepsilon E, \ \bm{\varphi} \mapsto [e' \mapsto \langle e', {\bm{\varphi}} \rangle]
\label{iso-1}
\end{equation}
is a topological isomorphism. We first show that it is a well-defined bijective mapping.
By Lemma \ref{criteriumKomatsu} with $G = \mathcal{S}^{\{\mathfrak{M}\}}_{\{\mathscr{W}\}}(\R^d)$ ($G$ is semi-Montel because it is nuclear), it suffices to show that a function ${\bm{\varphi}} \in  C(\R^d; E)$ belongs to $\mathcal{S}^{\{\mathfrak{M}\}}_{\{\mathscr{W}\}}(\R^d;E)$ if and only if $\langle e', {\bm{\varphi}} \rangle \in \mathcal{S}^{\{\mathfrak{M}\}}_{\{\mathscr{W}\}}(\R^d)$ for all $e' \in E'$. The direct implication is obvious. Conversely, let ${\bm{\varphi}} \in C(\R^d; E)$ be such that  $\langle e', {\bm{\varphi}} \rangle \in \mathcal{S}^{\{\mathfrak{M}\}}_{\{\mathscr{W}\}}(\R^d)$ for all $e' \in E'$. In particular,  $\langle e', {\bm{\varphi}} \rangle \in C^\infty(\R^d)$ for all $e' \in E'$. By \cite[Appendice Lemme II]{Schwartz-v}, we have that $\bm{\varphi} \in C^\infty(\R^d;E)$ and 
$$
\langle e', {\bm{\varphi}} \rangle ^{(\alpha)} = \langle e', {\bm{\varphi}}^{(\alpha)} \rangle, \qquad e' \in E', \alpha \in \N^d.
$$
Theorem \ref{projective-description} implies that for all $M \in \overline{V}(\mathfrak{M})$ and $w \in \overline{V}(\W)$  the set
$$
\left \{  \frac{\bm{\varphi}^{(\alpha)}(x)w(x)}{M_\alpha} \, | \, x \in \R^d, \alpha \in \N^d    \right\}
$$
is weakly bounded in $E$. Hence, this set is bounded in $E$ by Mackey's theorem. This  
 means that $\bm{\varphi} \in \mathcal{S}^{\{\mathfrak{M}\}}_{\{\mathscr{W}\}}(\R^d;E)$.  Next, we show that the isomorphism in \eqref{iso-1} holds topologically. Let $M \in \overline{V}(\mathfrak{M})$, $w \in \overline{V}(\W)$ and $p \in \csn(E)$ be arbitrary.  We denote by $B$ the polar set of the $p$-unit ball in $E$. The bipolar theorem yields that
\begin{align*}
 \sup_{e' \in B} \| \langle e', \bm{\varphi}\rangle \|_{\mathcal{S}^{M}_{w,\infty}} = \sup \left \{  \frac{|\langle e',\bm{\varphi}^{(\alpha)}(x) \rangle| w(x)}{M_\alpha} \, | \,   x \in \R^d, \alpha \in \N^d, e' \in B  \right\}  
= p_{M,w}(\bm{\varphi})
\end{align*} 
for all $\bm{\varphi} \in \mathcal{S}^{\{\mathfrak{M}\}}_{\{\mathscr{W}\}}(\R^d;E)$. The result now follows from Proposition \ref{projective-description}.
\end{proof}
We are ready to prove the kernel theorems.
\begin{theorem}
Let $\M_j$ be a weight sequence system on $\N^{d_j}$ satisfying $[\operatorname{L}]$ and $[\mathfrak{M}.2]'$, and let $\mathscr{W}_j$ be a weight function system on $\R^{d_j}$  satisfying $[\operatorname{wM}]$ and $[\operatorname{N}]$ for $j = 1,2$. The following canonical isomorphisms of locally convex spaces hold
\begin{equation}
\mathcal{S}^{[\mathfrak{M}_1 \otimes \mathfrak{M}_2 ]}_{[\mathscr{W}_1 \otimes \mathscr{W}_2]}(\R^{d_1+d_2}) \cong   \mathcal{S}^{[\mathfrak{M}_1]}_{[\mathscr{W}_1]}(\R^{d_1}) \widehat{\otimes} \mathcal{S}^{[\mathfrak{M}_2]}_{[\mathscr{W}_2]}(\R^{d_2}) \cong \mathcal{L}_\beta(\mathcal{S}^{[\mathfrak{M}_1]}_{[\mathscr{W}_1]}(\R^{d_1})'_\beta,  \mathcal{S}^{[\mathfrak{M}_2]}_{[\mathscr{W}_2]}(\R^{d_2}))
\label{kernel-1}
\end{equation}
and
\begin{equation}
\mathcal{S}^{[\mathfrak{M}_1 \otimes \mathfrak{M}_2 ]}_{[\mathscr{W}_1 \otimes \mathscr{W}_2]}(\R^{d_1+d_2})'_\beta \cong   \mathcal{S}^{[\mathfrak{M}_1]}_{[\mathscr{W}_1]}(\R^{d_1})'_\beta \widehat{\otimes} \mathcal{S}^{[\mathfrak{M}_2]}_{[\mathscr{W}_2]}(\R^{d_2})'_\beta \cong \mathcal{L}_\beta(\mathcal{S}^{[\mathfrak{M}_1]}_{[\mathscr{W}_1]}(\R^{d_1}),  \mathcal{S}^{[\mathfrak{M}_2]}_{[\mathscr{W}_2]}(\R^{d_2})'_\beta).
\label{kernel-2}
\end{equation}
\end{theorem}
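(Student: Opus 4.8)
The plan is to reduce both pairs of isomorphisms to the single vector-valued identity
$$\mathcal{S}^{[\mathfrak{M}_1 \otimes \mathfrak{M}_2]}_{[\mathscr{W}_1 \otimes \mathscr{W}_2]}(\R^{d_1+d_2}) \cong \mathcal{S}^{[\mathfrak{M}_1]}_{[\mathscr{W}_1]}\big(\R^{d_1}; \mathcal{S}^{[\mathfrak{M}_2]}_{[\mathscr{W}_2]}(\R^{d_2})\big),$$
realized by the canonical map $\varphi \mapsto [x_1 \mapsto \varphi(x_1, \cdot\,)]$, and then to feed it into Proposition \ref{charvv} together with the standard machinery of the $\varepsilon$-product. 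First I would record that, since each factor satisfies the hypotheses, $\mathfrak{M}_1 \otimes \mathfrak{M}_2$ satisfies $[\operatorname{L}]$ and $[\mathfrak{M}.2]'$ while $\mathscr{W}_1 \otimes \mathscr{W}_2$ satisfies $[\operatorname{wM}]$ and $[\operatorname{N}]$; in particular every space in sight is nuclear by Theorem \ref{ultra-suff}, the factor $E := \mathcal{S}^{[\mathfrak{M}_2]}_{[\mathscr{W}_2]}(\R^{d_2})$ is complete, and each of these spaces is reflexive and Montel (a nuclear Fréchet space, respectively a nuclear $(DF)$-space, being Montel).

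To establish the vector-valued identity I would treat the two types separately, the bijectivity of the canonical map being the routine observation that a jointly smooth $\varphi$ with the relevant seminorms finite yields a smooth $E$-valued map (using completeness of $E$), so that the content lies entirely in matching the topologies. In the Beurling case the seminorms of the right-hand space are exactly the ``mixed'' seminorms $\sup_{\alpha_1,\alpha_2,x_1,x_2} |\partial^{\alpha_1}_{x_1}\partial^{\alpha_2}_{x_2}\varphi|\, w_1^\lambda(x_1) w_2^\mu(x_2)/(M^\lambda_{1,\alpha_1} M^\mu_{2,\alpha_2})$, and these are cofinal with the diagonal seminorms of the left-hand space: the diagonal seminorms are the special case $\lambda=\mu$, whereas the mixed seminorm at $(\lambda,\mu)$ is dominated by the diagonal one at $\nu=\min(\lambda,\mu)$ because $M^\lambda$ increases and $w^\lambda$ decreases in $\lambda$. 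In the Roumieu case the seminorms cannot be ordered in this way, and here I would invoke Theorem \ref{projective-description}: it describes the seminorms of $E$ through $\overline{V}(\mathfrak{M}_2)$ and $\overline{V}(\mathscr{W}_2)$, so the right-hand seminorms become $\|\cdot\|_{\mathcal{S}^{M_1 \otimes M_2}_{w_1 \otimes w_2,\infty}}$ with $M_j \in \overline{V}(\mathfrak{M}_j)$, $w_j \in \overline{V}(\mathscr{W}_j)$, while the left-hand seminorms are $\|\cdot\|_{\mathcal{S}^{M}_{w,\infty}}$ with $M \in \overline{V}(\mathfrak{M}_1 \otimes \mathfrak{M}_2)$ and $w \in \overline{V}(\mathscr{W}_1 \otimes \mathscr{W}_2)$. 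The two systems are then cofinal in one another via the upward density of $\overline{V}(\mathfrak{M}_1) \otimes \overline{V}(\mathfrak{M}_2)$ in $\overline{V}(\mathfrak{M}_1 \otimes \mathfrak{M}_2)$ and of $\overline{V}(\mathscr{W}_1) \otimes \overline{V}(\mathscr{W}_2)$ in $\overline{V}(\mathscr{W}_1 \otimes \mathscr{W}_2)$ recorded in the previous section, together with the trivial fact that the products $M_1 \otimes M_2$ and $w_1 \otimes w_2$ again lie in the respective maximal families. I expect this Roumieu topological identification to be the main obstacle, as it is precisely what forces the detour through the projective description.

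With the identity in hand, the first isomorphism in \eqref{kernel-1} follows at once from Proposition \ref{charvv} applied with $E = \mathcal{S}^{[\mathfrak{M}_2]}_{[\mathscr{W}_2]}(\R^{d_2})$, which gives $\mathcal{S}^{[\mathfrak{M}_1]}_{[\mathscr{W}_1]}(\R^{d_1}; E) \cong \mathcal{S}^{[\mathfrak{M}_1]}_{[\mathscr{W}_1]}(\R^{d_1}) \widehat{\otimes} E$. For the second isomorphism in \eqref{kernel-1} I would use nuclearity to drop the distinction between $\otimes_\varepsilon$ and $\otimes_\pi$, the canonical symmetry $G \varepsilon H \cong H \varepsilon G$, and the identity $G \varepsilon H = \mathcal{L}_\beta(H'_\beta, G)$, which is valid because $H := \mathcal{S}^{[\mathfrak{M}_1]}_{[\mathscr{W}_1]}(\R^{d_1})$ is Montel; with $G = \mathcal{S}^{[\mathfrak{M}_2]}_{[\mathscr{W}_2]}(\R^{d_2})$ this yields $\mathcal{S}^{[\mathfrak{M}_1]}_{[\mathscr{W}_1]}(\R^{d_1}) \widehat{\otimes} \mathcal{S}^{[\mathfrak{M}_2]}_{[\mathscr{W}_2]}(\R^{d_2}) \cong \mathcal{L}_\beta(\mathcal{S}^{[\mathfrak{M}_1]}_{[\mathscr{W}_1]}(\R^{d_1})'_\beta,  \mathcal{S}^{[\mathfrak{M}_2]}_{[\mathscr{W}_2]}(\R^{d_2}))$.

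Finally, I would obtain \eqref{kernel-2} by dualizing \eqref{kernel-1}. Taking strong duals and using that, for the nuclear complete $(F)$- and $(DF)$-spaces at hand, $(G \widehat{\otimes} H)'_\beta \cong G'_\beta \widehat{\otimes} H'_\beta$ (a standard consequence of nuclearity within the theory of the $\varepsilon$-product, cf.\ \cite{Komatsu3,Schwartz-v}), gives the first isomorphism in \eqref{kernel-2}; note that the strong duals are again nuclear and Montel (nuclear Fréchet, respectively nuclear $(DF)$). The second isomorphism in \eqref{kernel-2} then follows by the same $\varepsilon$-product argument as before, namely $G'_\beta \widehat{\otimes} H'_\beta \cong H'_\beta \varepsilon G'_\beta = \mathcal{L}_\beta((G'_\beta)'_\beta, H'_\beta) = \mathcal{L}_\beta(G, H'_\beta)$, where I use reflexivity to identify $(G'_\beta)'_\beta = G$ for $G = \mathcal{S}^{[\mathfrak{M}_1]}_{[\mathscr{W}_1]}(\R^{d_1})$ and the fact that $G'_\beta$ is Montel so that the identity $E \varepsilon F = \mathcal{L}_\beta(F'_\beta, E)$ applies once more.
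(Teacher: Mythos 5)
Your proposal is correct and follows essentially the same route as the paper: reduce \eqref{kernel-1} to the vector-valued identity $\mathcal{S}^{[\mathfrak{M}_1 \otimes \mathfrak{M}_2]}_{[\mathscr{W}_1 \otimes \mathscr{W}_2]} \cong \mathcal{S}^{[\mathfrak{M}_1]}_{[\mathscr{W}_1]}(\R^{d_1};\mathcal{S}^{[\mathfrak{M}_2]}_{[\mathscr{W}_2]}(\R^{d_2}))$, feed it into Proposition \ref{charvv} together with the Montel property and the $\varepsilon$-product identities, and obtain \eqref{kernel-2} by duality for nuclear Fr\'echet and $(DF)$-spaces. In fact, you supply more detail than the paper does for the ``standard'' verification of the vector-valued identity (the Beurling diagonal-seminorm cofinality and the Roumieu detour through Theorem \ref{projective-description}, the latter being exactly the point the paper flags in its closing remark).
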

\begin{proof}
The isomorphisms in \eqref{kernel-2} follow from those in \eqref{kernel-1} and the general theory of nuclear Fr\'echet and $(DF)$-spaces, see e.g.\ \cite[Theorem 2.2]{Komatsu3}. We now show the isomorphisms in \eqref{kernel-1}. By Proposition  \ref{charvv} and the fact that $\mathcal{S}^{[\mathfrak{M}_1]}_{[\mathscr{W}_1]}(\R^{d_1})$ is Montel (as it is nuclear and barreled), it is enough to show that the following canonical isomorphism  of locally convex spaces holds
$$\mathcal{S}^{[\mathfrak{M}_1 \otimes \mathfrak{M}_2 ]}_{[\mathscr{W}_1 \otimes \mathscr{W}_2]}(\R^{d_1+d_2}) \cong \mathcal{S}^{[\mathfrak{M}_1]}_{[\mathscr{W}_1]}(\R^{d_1};\mathcal{S}^{[\mathfrak{M}_2]}_{[\mathscr{W}_2]}(\R^{d_2})).$$
This amounts to verify that the mappings
$$
\mathcal{S}^{[\mathfrak{M}_1 \otimes \mathfrak{M}_2 ]}_{[\mathscr{W}_1 \otimes \mathscr{W}_2]}(\R^{d_1+d_2}) \rightarrow \mathcal{S}^{[\mathfrak{M}_1]}_{[\mathscr{W}_1]}(\R^{d_1};\mathcal{S}^{[\mathfrak{M}_2]}_{[\mathscr{W}_2]}(\R^{d_2})): \varphi \mapsto [x_1 \mapsto \varphi(x_1, \, \cdot \,)]  
$$
and
$$
\mathcal{S}^{[\mathfrak{M}_1]}_{[\mathscr{W}_1]}(\R^{d_1};\mathcal{S}^{[\mathfrak{M}_2]}_{[\mathscr{W}_2]}(\R^{d_2})) \rightarrow \mathcal{S}^{[\mathfrak{M}_1 \otimes \mathfrak{M}_2 ]}_{[\mathscr{W}_1 \otimes \mathscr{W}_2]}(\R^{d_1+d_2}): \bm{\varphi} \mapsto [(x_1,x_2) \mapsto \bm{\varphi}(x_1)(x_2)],
$$
which are inverses of each other, are well-defined and continuous. But the proofs of these facts are standard and  therefore omitted (we only remark that in the Roumieu case one needs to use Theorem \ref{projective-description}).
\end{proof}

\end{document}